\newcommand\sA{{\mathcal A}}
\newcommand\sI{{\mathcal I}}
\newcommand\sL{{\mathcal L}}
\newcommand\sB{{\mathcal B}}
\newcommand\sP{{\mathcal P}}
\newcommand\sH{{\mathcal H}}
\newcommand\la{\lambda}
\newcommand\be{\beta}
\newcommand\e{\epsilon}
\newcommand\s{\sigma}
\newcommand\De{\Delta}
\newcommand\ga{\gamma}
\newcommand\de{\delta}
\DeclareMathOperator{\Pic}{Pic}
\newcommand{\sS}{\ensuremath{\mathcal{S}}}
\newcommand{\NN}{\ensuremath{\mathbb{N}}}
\newcommand{\hol}{\ensuremath{\mathcal{O}}}
\newcommand{\PP}{\ensuremath{\mathbb{P}}}
\newcommand{\FF}{\ensuremath{\mathbb{F}}}
\newcommand{\ra}{\ensuremath{\rightarrow}}
\def\eea{\end{eqnarray*}}
\def\bea{\begin{eqnarray*}}
\DeclareMathOperator{\ord}{ord}
\newcommand\dual{\mathrel{\raise3pt\hbox{$\underline{\mathrm{\thinspace d
\thinspace}}$}}}
\newcommand\qe{\ifhmode\unskip\nobreak\fi\quad $\Box$}       
\def\BOX{\hfill\lower.5\baselineskip\hbox{$\Box$}}
\newtheorem{theorem}{Theorem}
\newtheorem{theo}[theorem]{Theorem}
\newtheorem{rem}[theorem]{Remark}
\newtheorem{prop}[theorem]{Proposition}
\newtheorem{cor}[theorem]{Corollary}
\newtheorem{lemma}[theorem]{Lemma}
\newtheorem{example}[theorem]{Example}
\newtheorem{claim}[theorem]{Claim}
\newtheorem{main-claim}[theorem]{Main Claim}
\theoremstyle{definition}
\newtheorem{defin}[theorem]{Definition}
\newenvironment{dedication}
        {\begin{quotation}\begin{center}\begin{em}}
        {\par\end{em}\end{center}\end{quotation}}
\def\tagform@#1{\maketag@@@{\ignorespaces#1\unskip\@@italiccorr}}
\newcolumntype{H}{@{}>{\lrbox0}l<{\endlrbox}} 
\begin{document}

\title[Singularities of quartic surfaces]{Singularities  of  normal quartic  surfaces II (char=2)}
\author{Fabrizio Catanese}
\address{Lehrstuhl Mathematik VIII, 
 Mathematisches Institut der Universit\"{a}t
Bayreuth, NW II\\ Universit\"{a}tsstr. 30,
95447 Bayreuth, Germany \\ and Korea Institute for Advanced Study, Hoegiro 87, Seoul, 
133--722.}
\email{Fabrizio.Catanese@uni-bayreuth.de}

\author{Matthias Sch\"utt}
\address{Institut f\"ur Algebraische Geometrie, Leibniz Universit\"at
  Hannover, Welfengarten 1, 30167 Hannover, Germany\\ and\;\;\;\;\;\;\;\;\;\;\;\;\;\;\;\;\;\;\;\;\;\;\;\;\;\;\;\;\;\;\;\;\;\;\;\;\;
  \linebreak
  Riemann Center for Geometry and Physics, Leibniz Universit\"at
  Hannover, Appelstrasse 2, 30167 Hannover, Germany}

\email{schuett@math.uni-hannover.de}
\date{\today}

\thanks{AMS Classification: 14J17, 14J25, 14J28, 14N05.\\ 
The first author acknowledges support of the ERC 2013 Advanced Research Grant - 340258 - TADMICAMT}

\maketitle

\begin{dedication}
Dedicated to Herb Clemens on the occasion of his 82-nd  
 birthday.
\end{dedication}

\begin{abstract}
We show, in this second part,  that the maximal number of singular
points of a  normal quartic surface $X \subset \PP^3_K$ defined over an algebraically closed field $K$ of characteristic $2$ is at most   $14$,
 and that, if we have $14$ singularities, these are nodes {and moreover the
 minimal resolution of $X$ is a supersingular K3 surface.}

We produce an irreducible  component, of dimension $24$, of the variety of
 quartics with $14$ nodes.
 
We also exhibit  easy examples of  quartics with $7$ $A_3$-singularities.
\end{abstract}

\tableofcontents
 
\setcounter{section}{0}

\section{Introduction}

Once upon a time\footnote{It was in August-September 1976} in Cortona there was a Summer school with wonderful courses held by Herb Clemens and Boris Moishezon. 
 The first author
had the privilege of attending the Summer school. On that occasion Herb lectured on several beautiful classical topics,
and these lectures formed the basis of a lovely  book \cite{clemens}. Even if the course and the book were devoted to complex curves, yet characteristic $p$ appeared on the stage, and was used by Clemens to explain the `Unity of Mathematics' (section 2.12). In this spirit we are happy to dedicate this 
`characteristic $2$' paper to Herb.

\smallskip

These are our main results. 
 They feature  the property of the minimal resolution $S$ of  
 being a supersingular K3 surface (i.e.\ with Picard number $\rho=22$).\footnote{For the reader who has never seen  
 such a surface, an easy example is provided in 
 Corollary \ref{A3} in Section \ref{ss:plane}, as the resolution of a quartic surface with $7$ $A_3$ singularities, providing
 $21$ independent $(-2)$-curves on $S$ which together with the hyperplane section $H$ generate a rank $22$ 
 finite index sublattice
 of $\Pic(S)$.}
The following is our main theorem:

\begin{theo}\label{maintheo}
\label{theo}
A normal quartic surface $X \subset \PP^3_K$ defined over an algebraically closed field $K$ of characteristic $2$ 
contains no more than 14 singular points. 
If the maximum number of 14 singularities is attained,
then all singularities are nodes
and the minimal resolution is a supersingular K3 surface.
The variety of quartics with $14$ nodes contains 
an irreducible  component, of dimension $24$.

\end{theo}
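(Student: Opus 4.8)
The plan is to produce the $24$-dimensional component explicitly. The locus $V_{14}\subset\PP^{34}=\PP\bigl(H^0(\PP^3,\Oh(4))\bigr)$ of quartics with $14$ nodes has expected dimension $34-14=20$, so the assertion is that these quartics carry a $4$-dimensional excess, and its source is transparent. By the first part of the theorem the minimal resolution $\sigma\colon S\to X$ of such a quartic is a supersingular K3 surface; $X$ is the image of the morphism $\varphi_{|L|}$ attached to the nef and big class $L=\sigma^*\Oh_X(1)$ ($L^2=4$), and the $14$ exceptional $(-2)$-curves $E_1,\dots,E_{14}$ (each satisfying $L\cdot E_i=0$) are precisely the curves contracted by $\varphi_{|L|}$. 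Hence $X$ is recovered from the polarised surface $(S,L)$ alone, up to the choice of identification $\PP\bigl(H^0(S,L)^{\vee}\bigr)\cong\PP^3$. By Ogus' theory such pairs $(S,L)$ form a family of dimension at most $9$ (the dimension of the moduli space of supersingular K3 surfaces), and the remaining choice accounts for $\dim\mathrm{PGL}(4)=15$; therefore $\dim V_{14}\le 9+15=24$.

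For the matching lower bound I would write down, over an algebraically closed field $K$ of characteristic $2$, an explicit $24$-parameter family $\mathcal F$ of quartics adapted to the node configuration dictated by the rank-$15$ sublattice $M=\langle L,E_1,\dots,E_{14}\rangle\subset\Pic(S)$, and check that its general member $X_0$ is normal with exactly $14$ ordinary double points and no worse singularity; by the discussion preceding the theorem its minimal resolution $S_0$ is then a K3 surface (supersingular by the first part of the theorem, of Artin invariant $10$ for the general member). Irreducibility of $\mathcal F$ and the count $\dim\mathcal F=24$ are read off from the fact that $\mathcal F$ dominates the $9$-dimensional moduli of $M$-polarised supersingular K3 surfaces with $\mathrm{PGL}(4)$-orbits as fibres. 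Combined with the previous paragraph, every irreducible $W$ with $\mathcal F\subseteq W\subseteq V_{14}$ satisfies $\dim W\le24=\dim\mathcal F$, whence $W=\mathcal F$: thus $\mathcal F$ is an irreducible component of $V_{14}$ of dimension $24$.

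The excess can also be exhibited intrinsically. If $F_0+\varepsilon G$ is a first-order deformation of the equation of $X_0$ preserving $14$ singular points, then differentiating $F_t(q_i(t))=0$ along a path $q_i(t)\to p_i$ of singular points and using $\nabla F_0(p_i)=0$ (Jacobian criterion, valid in characteristic $2$) forces $G(p_i)=0$; hence, with $Z=\Sing(X_0)$, $T_{[X_0]}V_{14}\subseteq H^0(\PP^3,\sI_Z(4))/\langle F_0\rangle$, and it suffices to prove $h^0(\PP^3,\sI_Z(4))=25$. On $S_0$ one has $\sigma^{-1}\sI_Z\cdot\Oh_{S_0}=\Oh_{S_0}(-E)$ with $E=E_1+\dots+E_{14}$, together with $\sigma_*\Oh_{S_0}(-E)=\sI_Z$ and $R^1\sigma_*\Oh_{S_0}(4L-E)=0$, so that $h^0(\PP^3,\sI_Z(4))=1+h^0(S_0,4L-E)$; since $(4L-E)^2=36$ and $-(4L-E)$ is not effective (it meets the nef class $L$ negatively), Riemann--Roch and Serre duality on the K3 surface give $h^0(S_0,4L-E)=20+h^1(S_0,4L-E)$, and everything reduces to the identity $h^1(S_0,4L-E)=4$ --- the exact $4$-dimensional defect of the configuration --- which is to be extracted from the explicit effective divisors on $S_0$ (for instance a pencil through the nodes, quasi-elliptic because of characteristic $2$); this also shows $V_{14}$ is smooth of dimension $24$ at $[X_0]$. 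The main obstacle throughout is the hands-on part: exhibiting $\mathcal F$ and proving that the general member really acquires exactly $14$ nodes without further degeneration, and (for the intrinsic computation) pinning down $h^1(S_0,4L-E)=4$.
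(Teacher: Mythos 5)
Your proposal addresses only the final clause of the theorem (the $24$-dimensional component) and simply invokes ``the first part of the theorem'' for the bound $\nu\le 14$, the statement that $14$ singularities must be nodes, and supersingularity. Those assertions are the main content of the paper and by far the hardest part: they require the Gauss-map degree formula with its characteristic-$2$ Gaussian defects, the Main Claim that every line $\overline{P_iP_j}$ dualizes into $X^\vee$, and the genus-one fibration machinery (the bound of $12$ disjoint $(-2)$-curves in the fibres of an elliptic K3 fibration in characteristic $2$ via wild ramification, and the quasi-elliptic alternative). None of this is supplied or replaced, so as a proof of the stated theorem the proposal is fundamentally incomplete. Even for the component statement, the construction of the family $\mathcal F$ and the verification that its general member has exactly $14$ nodes are deferred as ``the hands-on part''; the paper does this concretely with the family of quartics $z^4+z^2Q(x)+B(x)=0$ whose dual variety is a plane, using the count of $7$ strange points of a general plane quartic $B$ (each doubling to two nodes), and the count $1+6+15-1+3=24$.

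There is also a genuine characteristic-$2$ error in your intrinsic tangent-space computation. The containment $T_{[X_0]}V_{14}\subseteq H^0(\PP^3,\sI_Z(4))/\langle F_0\rangle$ is derived by differentiating $F_t(q_i(t))=0$, but in characteristic $2$ the singular point of $uv+\zeta^2+tc_0$ sits at $\zeta=\sqrt{tc_0}$, i.e.\ it moves \emph{inseparably} in $t$, so the path $q_i(t)$ is not differentiable and the conclusion $G(p_i)=0$ does not follow. Indeed the versal deformation of a node is $uv+\zeta^2+c_1\zeta+c_0=0$ and the node persists if and only if $c_1=0$, with no condition on $c_0$ (since $\zeta^2+c_0=(\zeta+\sqrt{c_0})^2$); so the $14$ linear conditions cutting out the tangent space are the vanishing of the \emph{linear} coefficients in local coordinates, not membership in $\sI_Z$, and the subsequent computation of $h^0(\sI_Z(4))$ via $h^1(S_0,4L-E)=4$ is computing the wrong space (and is itself left unproven). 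The paper avoids this by exhibiting $10$ explicit first-order deformations $zG_i(x)$, $z^3L_j(x)$ that produce nonzero linear terms $c_1\zeta$ at $10$ of the $14$ nodes independently, giving $\operatorname{codim} T_{[X_0]}V_{14}\ge 10$ directly. Your alternative global upper bound $\dim V_{14}\le 9+15=24$ via the $9$-dimensional moduli of supersingular K3 surfaces is a legitimately different and attractive route to the ``component'' conclusion, but it leans on the supersingularity assertion (part of what is being proved) and on a careful identification of the fibres of $V_{14}\to\{(S,L)\}$ with $\mathrm{PGL}_4$-orbits, neither of which is established here.
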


 If the minimal resolution $S$ of a normal quartic $X$  is  
 not a supersingular K3 surface,  then
Theorem \ref{theo} shows that 
 $X$ has at most  $13$ singular points.
 This bound
 is not sharp; 
 we have examples with $12$ nodes,
 and we will show in a forthcoming paper (part III) that if $S$  is a K3 surface which is 
 not supersingular, then $X$ has at most  $12$ singular points.

The proof uses mostly classical techniques, notably the Gauss map,
but there are some ingredients (notably the main claim in Section \ref{s:proof})
which build on the theory of genus one fibrations (see Section \ref{s:g=1}).

We emphasize that each ingredient has some special feature in characteristic $2$;
for instance, the Gauss map of a normal surface in $\PP^3$ need not be birational,
and double points behave differently (this affects the degree formula, 
 see Section \ref{s:Gauss}).
The dual surface can be a plane, as we study in Section \ref{ss:plane}.
Elliptic fibrations feature wild ramification (at certain additive fibres),
which has surprising consequences for supersingularity (see Section \ref{s:g=1}).
The notion of genus one fibration also encompasses quasi-elliptic fibrations
whose properties we exploit in Section \ref{s:quasi},
especially with a view to the dual surface.

Naturally Theorem \ref{theo} leads to the question about what is true for other quasi-polarized K3 surfaces
in characteristic $2$,  which we plan to  address in part III as well.

\emph{Convention:}
We work over an algebraically closed field $K$,
mostly of characteristic 2, though many results
may also be stated over non-closed fields.

\section{The Gauss map}
\label{s:Gauss}

We consider in this section a 
normal  quartic
surface $$ X = 
 \{F(x)=0\}
 \subset \PP^3$$  and summarize and extend some considerations made in 
the first part, \cite{cat21} 
 in order to gain control over the (number of) singular points of $X$.
 
 The Gauss map $\ga : X \dasharrow \sP : = (\PP^3)^{\vee}$ is the rational map given by 
$$ \ga(x) : = \nabla F (x), \;\; x \in X^0 : = X \setminus \mathrm{Sing}(X).$$

We let $X^{\vee} : = \overline{\ga(X^0)}$ be the closure of the image of the Gauss map,  
which is a morphism on $X^0$, and becomes a morphism $\tilde{\ga}$ 
on a suitable blow up $\tilde{S}$ of the minimal resolution $S$ of $X$. $X^{\vee}$ is called the dual variety of $X$.

In order to compute the degree of $X^{\vee}$ (this is defined to be  equal to zero if $X^{\vee}$ is a curve), we consider a line $\Lambda \subset \sP$
such that  $\Lambda$ is  
transversal to the map $\tilde{\ga}$, this means: 

\begin{enumerate}
\item[1)] $\Lambda \cap X^{\vee} = \emptyset $ if $X^{\vee}$ is a curve; 
\item[2)] $\Lambda $ is not tangent to $X^{\vee}$ at any smooth point, and neither contains   any singular point of $X^{\vee}$,
nor any point $y$ where the dimension of the fibre $\tilde{\ga}^{-1} (y)$ equals $1$, 
so that 
\item[3)]
 $\Lambda \cap X^{\vee}  $ is in particular a subscheme consisting  of $\deg(X^{\vee})$ distinct points,
and its inverse image in $\tilde{S}$ is a finite set.
\end{enumerate}

By a suitable choice of the coordinates, we may assume that 
$$\ga^{-1} ( \Lambda ) \subset   X \cap \{F_1 = F_2 = 0\}\;\;\;\;\;\; 
(F_i = \partial F/\partial x_i).
$$

The latter  is a finite set, hence by Bezout's theorem it consists of $4 \cdot 3^2= 36$  points counted with multiplicity,
including the singular points of $X$. 

We  have therefore proven the following (probably well known)  formula:

$$ (DEGREE -  FORMULA) \ \  \deg(\ga) \deg(X^{\vee} ) = 36 - \sum_{P \in \mathrm{Sing}(X)} (F,F_1, F_2)_P,$$
where the symbol $(F,F_1, F_2)_P$ denotes the local intersection multiplicity at $P$,
defined by
$$ (F,F_1, F_2)_P: =  \dim_K ( \hol_{\PP^3,P} / (F,  F_1, F_2)) = \dim_K ( \hol_{X,P} / ( F_1, F_2))  .$$ 

Under the above assumptions this intersection multiplicity is zero unless $P$ is a singular point, and then we have
$$ (F,F_1, F_2)_P \geq 2  \;\; \forall P \in \mathrm{Sing}(X) .$$
 The integer $(F,F_1, F_2)_P$ shall be called the Gaussian defect.

\subsection{Calculation of Gaussian defects}

Since quartics with triple points were treated in \cite{cat21},
 we will mostly be concerned with double points,
but we will cover all types of singularities in Proposition \ref{9}.
Double point singularities
 are divided into three rough types
according to the rank of the tangent quadric at $P$:
\begin{enumerate}
\item
nodes: here the quadric is smooth, and we have an $A_1$-singularity, formally equivalent to $xy = z^2$;
 the nodes give a contribution $(F,F_1, F_2)_P=2$ to the Gaussian defect;
\item
biplanar double points: here the quadric consists of two planes, and we have an $A_n$-singularity with $n\geq 2$, 
formally equivalent 
to $xy = z^{n+1}$ (see for instance \cite{cat21a});  the biplanar double points of type $A_{n}$
give a contribution of $n+1$ to the Gaussian defect;
\item
uniplanar double points: here the quadric consists of a double  plane, and we have several types (see \cite{artin}, \cite{artin2},
\cite{roczen}), the Taylor 
development 
 is of the form $ x^2 + \psi =0$, where $\psi$ has order $\geq 3$; the uniplanar 
 double points give a contribution of order at least $8$ to the Gaussian defect, since $ (F,F_1, F_2)_P = (F, \psi_1, \psi_2) \geq 8$.

\end{enumerate}

\begin{prop}\label{gaussestimate}
Let $X$ be  a normal quartic surface in $\PP^3$.

(I) If  $X$ has  $\nu$ singular points of multiplicity $2$,  
among them $b$ biplanar 
double points, and $u$ uniplanar double points, then:
\begin{eqnarray}
\label{eq:deg}
36 - \deg(\ga) \deg(X^{\vee} ) \geq    2 \nu + b + 6 u .
\end{eqnarray}
 (II) 
 If $X$ contains  a node,
then the exceptional curve $E$ in $S$ resolving the node maps to a line via an inseparable map of degree
 two. In particular the Gauss map cannot be birational if $X^{\vee} $ is a normal surface.
 
 (III)  The dual variety $X^{\vee}$ cannot be a line. 
 
 (IV) For $\nu \geq 13$, the dual variety $X^{\vee}$ is  an irreducible surface; 
 in particular $\deg(\ga) \deg(X^{\vee} ) \geq 2$, and,  if  $\deg(\ga) =1$, $X^\vee$  is  non-normal and $ \deg(X^{\vee} ) \geq 3$.

 (V)  For $\nu \geq 14$, if the dual variety $X^{\vee}$  is not a plane, then the singularities
of $X$ are all of type $A_n$ ($u=0$).
 
\end{prop}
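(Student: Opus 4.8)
The plan is to suppose that $X$ has a uniplanar double point, i.e.\ $u\ge 1$, and derive a contradiction from the degree formula together with parts (I)--(IV), which are already at our disposal.

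First I would feed this assumption into estimate \eqref{eq:deg} of part (I). Since $\nu\ge 14$, $b\ge 0$ and $u\ge 1$,
$$ 36-\deg(\ga)\deg(X^{\vee})\ \ge\ 2\nu+b+6u\ \ge\ 28+6\ =\ 34, $$
hence $\deg(\ga)\deg(X^{\vee})\le 2$; note this already forces $u=1$, because $u\ge 2$ would push the right-hand side to $\ge 40>36$.

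Next I would combine this with part (IV). As $\nu\ge 14\ge 13$, part (IV) guarantees that $X^{\vee}$ is an irreducible surface and that $\deg(\ga)\deg(X^{\vee})\ge 2$; together with the previous inequality this gives $\deg(\ga)\deg(X^{\vee})=2$, so that $(\deg(\ga),\deg(X^{\vee}))$ equals $(2,1)$ or $(1,2)$. In the case $\deg(X^{\vee})=1$ the dual variety is an irreducible surface of degree one in $\PP^3$, i.e.\ a plane, contradicting the hypothesis. In the case $\deg(\ga)=1$ the second half of part (IV) forces $\deg(X^{\vee})\ge 3$, contradicting $\deg(X^{\vee})=2$. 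Either way we reach a contradiction, so $u=0$ and all singularities of $X$ are of type $A_n$.

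The proof is essentially bookkeeping on top of parts (I)--(IV), and I do not expect a genuine obstacle. The only points requiring care are to keep both halves of part (IV) in play (the bound $\deg(\ga)\deg(X^{\vee})\ge 2$ and the dichotomy ``$\deg(\ga)=1$ implies $X^{\vee}$ non-normal of degree $\ge 3$''), and to notice exactly where each hypothesis is used: $\nu\ge 14$ rather than $13$ is what makes the Gaussian-defect estimate collapse $\deg(\ga)\deg(X^{\vee})$ to at most $2$, while ``$X^{\vee}$ not a plane'' is precisely what eliminates the possibility $\deg(X^{\vee})=1$.
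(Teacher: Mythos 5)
Your treatment of part (V) is correct, and it is essentially the paper's own argument in contrapositive form: the paper notes that, $X^{\vee}$ not being a plane, either $\deg(\ga)\ge 2$ (so the product is at least $4$) or $\deg(\ga)=1$ and then (IV) forces $X^{\vee}$ non-normal with $\deg(X^{\vee})\ge 3$; in either case $\deg(\ga)\deg(X^{\vee})\ge 3$, whence $2\nu+b+6u\le 33$ and $u=0$. Your version --- assuming $u\ge 1$, squeezing $\deg(\ga)\deg(X^{\vee})$ between the bounds $\ge 2$ from (IV) and $\le 2$ from \eqref{eq:deg}, and then excluding both factorizations of $2$ --- is the same bookkeeping, so on this point there is nothing to object to.

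The genuine gap is one of coverage: the statement comprises assertions (I)--(V), and your proposal proves only (V), declaring (I)--(IV) to be ``already at our disposal.'' They are not; they carry the real content of the proposition and each needs an argument. (I) requires computing the Gaussian defects $(F,F_1,F_2)_P$ case by case: a node contributes exactly $2$, an $A_n$-point contributes $n+1\ge 3$, and a uniplanar double point at least $8$ (using that its Taylor expansion is $x^2+\psi$ with $\ord\psi\ge 3$, so the two chosen partials both have order $\ge 2$). (II) requires writing the Gauss map on the exceptional conic $E=\{xy+z^2=0\}$ of a node and observing that it is $(x:y:z)\mapsto(y:x:0:0)$, an inseparable degree-two map onto a line (parametrizing $E$ by $(s^2:t^2:st)$ makes the inseparability visible), which then contradicts birationality onto a \emph{normal} surface. (III) requires showing that if two partial derivatives of $F$ vanish identically, then $\mathrm{Sing}(X)$ contains the curve $X\cap\{M=0\}$ for an explicit quadric $M$, contradicting normality. (IV) then combines (I)--(III) with the fact that an irreducible quadric surface is normal. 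None of these is a formal consequence of the degree formula alone, so as it stands the proposal is incomplete.
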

\begin{proof}
(I): follows since the nodes give a contribution equal to $2$ to the Gaussian defect, the biplanar double points of type $A_{n}$
give a contribution  $n+1 \geq 3$, the uniplanar 
 double points give a contribution  at least $8$.

(II): given a node $P$, an $A_1$-singularity, then
the affine Taylor development at  $P$ is given by
$$F = xy + z^2 + \psi (x,y,z) =0$$
and the Gauss map on the exceptional curve $ E$,
 given as a conic $E = \{ xy + z^2=0\} \subset \PP^2$,
is  given by $(x,y,0,0)$. If  $X^{\vee} $ is a normal surface and $\ga$ is birational onto its image, then 
$$\tilde{\ga} : \tilde{S} \ra X^{\vee} $$ is an isomorphism over the complement of a finite number of points of
$X^{\vee} $, a contradiction since $E$ maps $2$ to $1$ to a line.

(III):  if $X^{\vee}$ is a line, then there are projective coordinates in $\PP^3$ such that
 the partial derivatives with respect to 2 variables, say $z,w$, are identically zero; hence
$$ X = \{ a z^4 + b w^4 + c z^2 w^2 + z^2 D(x,y) + w^2  E(x,y) + f (x,y) = 0 \}.$$

Writing $$ D(x,y) = d_1 x^2 + d_2 y^2 + d xy,  E(x,y) = e_1 x^2 + e_2 y^2 + e xy,$$
$$ f(x,y) = q(x,y)^2 + f_1 x^3 y + f_2 x y^3,$$
we see that 
$$ \mathrm{Sing}(X) = X \cap \{  y M = x M = 0\}, \ M : = d z^2 + e w^2 +  f_1 x^2 + f_2  y^2,$$
hence Sing$(X) \supset X \cap \{ M=0\}$ and $X$ is not normal.

(IV):  since for $\nu \geq 13$ there must be a node by the degree formula,  $X^{\vee}$ 
contains a line; but $X^\vee$ cannot be a line by (III), 
hence  it is a surface; the rest follows from (II) and from the fact that an irreducible quadric is normal.

 (V)  For $\nu \geq 14$, if the dual variety $X^{\vee}$  is not a plane, then  $ \deg (\ga) \geq 2$, or
$ \deg (X^{\vee}) \geq 3$  (since if $\ga$ is birational, then $X^{\vee}$ is not normal  by (IV)),
hence $ 2 \nu + b + 6 u 
\leq 33$, hence $u=0$ and   the singularities
of $X$ are all of type $A_n$.

\end{proof}

\begin{rem}
\label{rem:non-RDP}

The degree formula can be improved substantially by taking the precise types
of singularities into account as the proof of (I) shows.
For instance, the biplanar double points contribute $b$ in \ref{eq:deg} exactly
when they all have type $A_2$. 
\end{rem}

\subsection{Gaussian defect of non-rational double point singularities}

In the spirit of Remark \ref{rem:non-RDP}, 
we take a closer look at those singularities which are not rational double point.
This will enable us to strengthen the results of Proposition \ref{gaussestimate},
and to decide when the minimal resolution $S$ of $X$ is a K3 surface (see Proposition \ref{cor:9}).

\begin{prop}\label{9}
If a given singularity $P$ on $X$ is not a rational double point, we have, for 
a general (hence any)
 choice of the 
affine local coordinates at $P$,
\begin{eqnarray}
\label{eq:>=10}
(F,F_1, F_2)_P  \geq 10.
\end{eqnarray}
\end{prop}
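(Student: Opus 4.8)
The plan is to reduce to a statement about the order of vanishing of the partial derivatives $F_1, F_2$ along $X$ at the singular point $P$, and to exploit that a non-rational-double-point singularity forces the defining Taylor expansion to be ``sufficiently degenerate''. Recall from the trichotomy preceding Proposition \ref{gaussestimate} that a double point singularity is either a node, a biplanar point $A_n$, or a uniplanar point. Among these only the uniplanar double points can fail to be rational double points (all $A_n$ are rational), so for a non-RDP double point we are in the uniplanar case: after choosing affine local coordinates, the Taylor development has the shape $F = x^2 + \psi(x,y,z)$ with $\operatorname{ord}(\psi)\geq 3$, and $(F,F_1,F_2)_P = (F,\psi_1,\psi_2)_P$. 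For the singularity to be non-rational we need $\psi$ itself to be degenerate: the known classification of uniplanar double points in characteristic $2$ (Artin \cite{artin}, \cite{artin2}, Roczen \cite{roczen}) lists exactly which normal forms $x^2+\psi$ give rational double points ($D_n$, $E_6,E_7,E_8$ types) and which do not, and the non-rational ones are precisely those where $\psi$ has a further degeneracy — e.g. $\psi$ has no term involving $y$ or $z$ linearly in an appropriate sense, forcing $\operatorname{ord}(\psi)\geq 4$ or a degenerate leading form. So the first step is to enumerate, using that classification, the possible leading behaviours of $\psi$ for a non-RDP uniplanar point.

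The second step is to bound $(F,\psi_1,\psi_2)_P$ from below in each surviving case. The cleanest route is: $(F,\psi_1,\psi_2)_P \geq (x^2,\psi_1,\psi_2)_P$ is false in general (adding terms can only decrease multiplicity), so instead I would argue directly on $\hol_{X,P}/(F_1,F_2) = \hol_{X,P}/(\psi_1,\psi_2)$, where $\hol_{X,P}$ has a power series presentation $K[[x,y,z]]/(x^2+\psi)$. Since $\operatorname{char}K=2$, $F_1 = \psi_1$ (the $x^2$ term differentiates to $0$), so both generators $\psi_1,\psi_2$ of the ideal already lie in the square of the maximal ideal whenever $\psi$ is degenerate enough. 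Using $x^2 = \psi$ in $\hol_{X,P}$ to lower the $x$-degree, one reduces to estimating the colength of an ideal in $K[[y,z]]$-modules, and the degeneracy from Step 1 (order $\geq 4$, or a square leading form, etc.) pushes the colength to at least $10$ — typically one gets a $2$-dimensional ``$x$-part'' multiplied by a colength-$\geq 5$ ideal in $K[[y,z]]$, or a direct count in the examples like $x^2 + y^2z + \text{(higher)}$-type degenerations. I would run through the (short) list of non-RDP normal forms and verify the bound $10$ in each, the extremal ones being the ``mildest'' non-rational uniplanar singularities.

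The main obstacle, and the step I expect to require real care, is Step 1: pinning down exactly which uniplanar normal forms in characteristic $2$ are non-rational-double-point, and in a form usable for the multiplicity count. The characteristic-$2$ classification of rational double points (especially the $D_n$ and $E_n$ families, which have several non-equivalent normal forms in small characteristic) is genuinely subtle, and one must be sure the dichotomy ``rational $\Leftrightarrow$ [explicit list of $\psi$]'' is stated correctly before the multiplicity estimate can be trusted. A safer alternative, which I would fall back on if the case analysis threatens to balloon, is to avoid the full classification: instead observe that a non-RDP singularity on a normal surface has minimal resolution containing a ``bad'' exceptional configuration (non-ADE, e.g. with an elliptic or higher-genus component, or with positive arithmetic genus), and combine this with a semicontinuity / deformation argument — the Gaussian defect $(F,F_1,F_2)_P$ is the length of the scheme $\{F_1=F_2=0\}$ at $P$, which is lower-semicontinuous, and one can degenerate to a monomial situation where the length is computed to be $\geq 10$ by a direct Newton-polygon count. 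Either way, the essential content is the same: non-rationality forces at least one more order of degeneracy in $\psi$ than the worst rational case, and each such order contributes enough to the length to clear the threshold $10$.
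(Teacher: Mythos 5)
Your reduction to the uniplanar case $F = x^2 + \psi$, $\operatorname{ord}\psi \geq 3$, matches the paper's starting point, but both of your proposed routes from there have genuine gaps. First, your Step 1 presupposes a ``(short) list of non-RDP normal forms'' to run through. No such list exists: the classification of Artin and Roczen gives finitely many normal forms for the \emph{rational} double points, and the non-rational uniplanar double points are the complement --- a locus with continuous moduli (elliptic and worse singularities), not a finite collection of equations. What is available is a finite-jet \emph{criterion} separating the two, and one must then prove the colength bound for \emph{every} power series satisfying the degenerate jet condition; that uniform estimate is the entire content of the proposition, and your sketch only asserts it (``pushes the colength to at least 10''). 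Second, your fallback inverts semicontinuity: local intersection multiplicity is \emph{upper} semicontinuous under specialization, so degenerating $F$ to a monomial $F_0$ and computing $(F_0,(F_0)_1,(F_0)_2)_P\geq 10$ only bounds the defect of $F$ from \emph{above} (the defect of the special member dominates the sum of the defects of nearby deformations) and yields no lower bound for $F$ itself.

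The paper's proof avoids both problems. It uses the classification only in the weakest possible form: writing $f = x^2 + xg(y,z) + g'(y,z) + b$ with $g$ quadratic, $g'$ cubic, $\operatorname{ord} b\geq 4$, it blows up $P$ and observes that if $g'$ has three distinct roots then the exceptional curve carries three nodes, so $P$ is a $D_4$ --- a rational double point, excluded by hypothesis. Hence $g'$ has a multiple root (or vanishes), and for \emph{all} such $f$ (including the rational $D_n$, $n\geq 5$, and $E_n$ cases) the colength of $(f,f_y,f_z)$ is bounded below by $10$ via an explicit filtration argument: one counts the codimension of the image of the ideal in $\mathfrak M_P^k/\mathfrak M_P^{k+1}$ for $k=2,3,4$ and tracks the single degree-one relation $(cy+dz)f + dxf_y + cxf_z \in \mathfrak M_P^4$ among the generators. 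If you want to salvage your approach, this unconditional degree-by-degree count is the step you would have to supply in place of the classification and the semicontinuity appeal.
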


\begin{proof}

First of all, for a triple point the Gaussian defect is at least 12, 
 since the Gaussian defect is greater or equal to the product of the respective 
orders of $F, F_1, F_2$ at $P$, and a double point which is not a rational double point
must be  a uniplanar double point.

We can therefore assume that the  affine Taylor development   of $F$ at the point $P$ is of the form 
$$ \ F (x_1, x_2,x_3) = x_1 ^2   +   G (x) + B (x),$$
where $G$ is homogeneous of degree $3$ and $B$ of degree $4$.

We may take local coordinates such that $ x: = x_1,$ and where $y,z$ are generic linear forms vanishing at $P$,
hence the Gaussian defect will be the intersection number 
$ (F, F_y, F_z)$ at the point $P$.

This said, we can write 
$$ F (x,y,z) = x^2 ( 1 + A(x,y,z)) + x g(y,z) + g' (y,z) + B' (x,y,z),$$
and multiplying by $ ( 1 + A(x,y,z))^{-1}$ we get  a formal power series  equation
\begin{eqnarray}
\label{eq:f} f =  x^2 + x g(y,z) + g' (y,z) + b (x,y,z),
\end{eqnarray}
where $g$ is a quadratic form, $g'$ is a cubic form, and  the power series $b$ has order at least 4.

We  consider the blow up of the singular point $P \in X$. 

The equation of $X$ is $ f(x,y,z) = x^2 + x g(y,z) + g'(y,z) + b(x,y,z)  = 0$;
set now :
$$ x = t\, \xi, \; \ y = t\, \eta, \; \ z= t\, \zeta$$
(here $t=0$ is the equation of the exceptional divisor, isomorphic to $\PP^2$ and $(\xi,  \eta,  \zeta)$
are homogeneous coordinates in $\PP^2$)
so that the equation of the blow up is 

$$ \xi^2 + t \xi \ g (\eta,  \zeta) + t  \ g' (\eta,  \zeta) + t^2 \ b (\xi, \eta,  \zeta)=0.$$

On the exceptional line $\{ t = \xi =0 \}$ the singular points are the roots of $ g' $.

Hence either $ g' $ is identically zero, or the blow up is normal.

If $ g' $ does not have a multiple root,
 we get 3 nodes, hence   $P$ is a singularity of type $D_4$ and we are done.
 
Therefore  we may assume that $g'$ has a multiple root (or is identically zero) and apply a linear transformation such that 
$ y =0$  is this root.

We want to show that the length of the Artin algebra 
$$\sA : = \hol_P / ( f, f_y, f_z) $$
is $\geq 10$.

A fortiori it will suffice to replace the algebra $\sA$ 
by the quotient algebra 
$$\sA_4 : = \hol_P / (( f, f_y, f_z)  + \mathfrak M_P^4)$$ 
or 
by the quotient algebra 
$$\sA_5 : = \hol_P / ((f, f_y, f_z)  + \mathfrak M_P^5).$$
Inside the algebra $\sB_4 : =  \hol_P /  \mathfrak M_P^4$, the ideal $\sI$ generated by $ f, f_y, f_z$
is generated as a vector space by the vectors 
$$f, f_y, f_z, x f, x f_y, x f_z, y f, y f_y, y f_z, z f, z f_y, zf_z $$
where the first three have order at least $2$, and the latter at least $3$.

Since $\sI \subset \mathfrak M_P^2$,  which has colength $4$, it suffices to show that 

\begin{enumerate}
\item[1)]
 $(\sI + \mathfrak M_P^3)/ \mathfrak M_P^3$ has codimension at least $3$  in $\mathfrak M_P^2/ \mathfrak M_P^3$, which is a
 6-dimensional vector space.
\item[2)] 
$((\sI \cap  \mathfrak M_P^3) +  \mathfrak M_P^4)  / \mathfrak M_P^4$ has codimension at least $2$  in $\mathfrak M_P^3/ \mathfrak M_P^4$,
which is a 10-dimensional vector space.
\item[3)] if in 2) codimension $2$ occurs, then 
$((\sI \cap  \mathfrak M_P^4) +  \mathfrak M_P^5)/ \mathfrak M_P^5$ has codimension at least $2$  in $\mathfrak M_P^4/ \mathfrak M_P^5$.
\end{enumerate}

We can now write 
$$ f = x^2 + x g(y,z) + cy^3+dy^2z  \;\; (\mathrm{mod} \  \mathfrak M_P^4),$$
$$ \  f_y = a xz + c y^2, \;\;\; \ f_z = a xy + d y^2  \;\; (\mathrm{mod} \  \mathfrak M_P^3),  $$
where  $c,d$ may be equal to zero.

The first assertion is clear, since modulo $\mathfrak M_P^3$ we just have three vectors,  
$x^2, a xz + c y^2, a xy + d y^2$. 
In fact, if $a=0$, then the vectors are linearly dependent modulo $\mathfrak M_P^3$,
so $(\sI + \mathfrak M_P^3)/ \mathfrak M_P^3$ has codimension at least $4$  in $\mathfrak M_P^2/ \mathfrak M_P^3$.
As one easily checks that $((\sI \cap  \mathfrak M_P^3) +  \mathfrak M_P^4)  / \mathfrak M_P^4$  has codimension at least $3$  in $\mathfrak M_P^3/ \mathfrak M_P^4$ in this situation,
it follows that $(F,F_1, F_2)_P  \geq 11$ as desired.
Hence, in what follows, we will assume $a\neq 0$ and thus normalize $a=1$.


For the second assertion, it  suffices to show that, modulo $\mathfrak M_P^4$, we get 
 a subspace in degree 3 of dimension at most $8$.
 
 From $f$, in degree 3 we get 
$ x^3, x^2 y, x^2 z,$
and modulo the   subspace generated by the above vectors 
we get  
$xf_y\equiv cxy^2, xf_z\equiv dxy^2$: these vectors are all contained 
in  the 4-dimensional subspace $V$ generated by 
$x^3, x^2 y, x^2 z, x y^2.$ 
Since there are only 4 further generators in degree 3 (given below), this already proves the second assertion.

For future use, we further investigate $(\sI + \mathfrak M_P^4)  / \mathfrak M_P^4$. 
Modulo $V$, we  get the 4 vectors, 
$$ yf_y = y (x z + c y^2),  \; zf_y = z (x z + c y^2), \;
yf_z \equiv d y^3, \;  zf_z=z (x y  + d y^2).$$
These are linearly independent if and only if $d\neq 0$;
in that case, their span is  generated by
$xyz, xz^2, y^3, y^2z$, in agreement with the second assertion.

Note that, if $d=0$, then $(\sI \cap  \mathfrak M_P^3) / \mathfrak M_P^4$ has codimension at least $3$  in $\mathfrak M_P^3/ \mathfrak M_P^4$, and the main claim   of our proposition follows readily.

Hence we will assume $d\neq 0$ in what follows.

In order to prove the third assertion 
we observe that the ideal $$
\mathcal I':=(x^2, x y^2, x z^2, xyz, y^3, y^2z)$$ 
arising from the monomials in the above computations contains all monomials
of degree 4 except for $z^4$ and $z^3y$.

Define $W$ to be the subspace of  $\mathfrak M_P^4/ \mathfrak M_P^5$ generated by $\mathcal I'$,
i.e.\ by the monomials containing $x$
or divisible by $y^2$.

Working in 
$$ U: = (\mathfrak M_P^4/ \mathfrak M_P^5)/W \cong K z^4 \oplus K z^3y
$$
we want to show that $((\sI \cap  \mathfrak M_P^4) +\mathfrak M_P^5) / \mathfrak M_P^5$
maps to zero in $U$.

Observe that, in degree $3$, $b_y \equiv \la y^2 z + \mu z^3, b_z \equiv \la y^3 + \mu y z^2 $
modulo $\mathcal I'$.

But in order to get this, we must have some  degree 1  relation between the quadratic parts of $f, f_y, f_z$,
giving then rise in degree 4 
to some non-zero vector in $U$.

In fact, one relation is obvious, namely
\[
(  cy + dz)f+ dxf_y+ cxf_z\in \mathfrak M_P^4,
\]
but it only produces  
multiples of $x$ and $y^2$ in degree 4, i.e.\ zero in $U$.

Direct calculation shows that the above is the only relation in degree 1 occurring, so we are done.

\end{proof}

We will use the proposition soon to derive a criterion for  the minimal resolution $S$ of $X$ to be  a K3 surface
(Proposition \ref{cor:9}),
but as a preparation 
we have to discuss the case where the dual surface $X^\vee$ is a plane.

\section{When the dual surface is a plane}
\label{ss:plane}

 We continue to consider a normal quartic surface $X\subset \PP^3$.
The term $\deg(\gamma)\deg(X^\vee)$ in \eqref{eq:deg}
deems it essential to study the case where the dual surface $X^{\vee}$ is a plane.

In this case there are coordinates $(x_1, x_2, x_3, z)$ such that   the partial derivative of $F$ with respect to $z$ is identically zero,
hence
\begin{eqnarray}
\label{eq:plane}
 X = \{ (x,z) | a z^4 + z^2 Q(x) + B(x)=0\}.
 \end{eqnarray}
We are going to show that for the general such surface $X$ has $14$ nodes as singularities
 (Theorem \ref{dual=plane}). 
This will prove part of Theorem \ref{theo}.

There are a few special equations where $X^\vee$ is a plane
which require extra treatment.
Two of them were contained in part one of this paper \cite{cat21}:

\begin{enumerate}
\item[(1)]
 $ X = \{ (x,z) \mid  z^2 Q(x) + B(x)=0\}$ ($a=0$)  is the case where there is a singular point $P$ ($x=0$)
such that projection with centre $P$ is inseparable.
\item[(2)]
 $Q(x)$ is the square of a linear form (see Proposition \ref{special}). 
\end{enumerate}
 One more equation will appear in Proposition  \ref{insep}.
Together with Lemma \ref{dp}, they will suffice to prove the instrumental fact that, with at least 13 singular points,
the minimal resolution $S$ of $X$ is a K3 surface (Proposition \ref{cor:9}).

\medskip

The curve $\{ B(x)=0\}$  obtained from equation  \ref{eq:plane}  is a plane quartic curve, and we want now to establish some simple properties of 
plane quartic curves which will be relevant for our issues.

\subsection{The  strange points of a plane curve of even degree in characteristic $=2$}
We define here, as in \cite{cat21}, the strange points of a plane curve $\{ B(x) = 0\}\subset\PP^2$
to be the points outside the curve where the gradient 
$\nabla B$ vanishes.

We have seen in Part I (\cite{cat21}) for the case of  a general plane quartic curve $\{ B(x) = 0\}\subset\PP^2$:

\begin{prop}\label{7}
 For a homogeneous quartic polynomial $B \in K [x_1,x_2,x_3]_4$
 let $\Sigma$  be the critical locus of $B$, the locus 
where the gradient 
$\nabla B$ vanishes. 
If $\Sigma$ is a finite set, then it consists of at most  7 points.

For $B$ general, $\Sigma$ consists of exactly 7 reduced points.
\end{prop}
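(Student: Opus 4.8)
The plan is to work directly with the gradient $\nabla B = (B_1, B_2, B_3)$ where $B_i = \partial B/\partial x_i$, each a ternary cubic form. In characteristic $2$ there is a crucial simplification: by Euler's relation $4B = x_1 B_1 + x_2 B_2 + x_3 B_3 = 0$, so the three partials are \emph{linearly dependent as polynomials} over $K[x]$, namely $x_1B_1 + x_2B_2 + x_3B_3 \equiv 0$. More importantly, writing $B$ as a sum of monomials, every square monomial $x_i^2 x_j^2$ and $x_i^4$ differentiates to zero, so only the ``mixed'' part of $B$ survives in the gradient; a short computation shows $B_i$ depends only on the coefficients of monomials in which $x_i$ appears to an odd power. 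First I would set up coordinates and record explicitly the three cubics $B_1, B_2, B_3$ in terms of the coefficients of $B$, observing that each is ``even in its own variable'' in the sense just described — this is the characteristic-$2$ analogue of the fact that $\nabla$ of a quartic is a triple of cubics, but now with extra structure.

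The main step is the bound $\#\Sigma \le 7$ when $\Sigma$ is finite. Here $\Sigma$ is the common zero locus of the three cubics $B_1, B_2, B_3$ in $\PP^2$, which by Bézout has at most $3^3 = 27$ points counted with multiplicity if it is finite — far from the claimed $7$. So the naive Bézout bound is useless, and the real content is that the three cubics are \emph{far from a regular sequence of generic cubics}: they satisfy the syzygy $\sum x_i B_i = 0$ identically, and moreover each $B_i$ omits its own variable in the odd degree. The approach I would take is to exploit the Koszul/syzygy structure: the linear syzygy $x_1 B_1 + x_2 B_2 + x_3 B_3 = 0$ means that $\Sigma$ is cut out by the $2\times 2$ minors of a suitable $3\times 2$ (or $3\times 3$ skew) matrix, or alternatively that $(B_1,B_2,B_3)$ is not a complete intersection ideal and the scheme $\Sigma$, when zero-dimensional, is governed by the Hilbert–Burch theorem. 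Concretely, since the $B_i$ are the partials, $\Sigma$ is the critical locus, and in characteristic $2$ one checks that $\Sigma$ coincides with the base locus of the net of cubics spanned by $B_1, B_2, B_3$, which has a linear syzygy and hence (Hilbert–Burch) is resolved by a matrix giving $\deg \Sigma = $ a sum of products of entry degrees — working this out with the degrees forced by the structure of the $B_i$ yields $7$. Alternatively, and perhaps more in the spirit of the paper, I would change coordinates to put $B$ in a convenient normal form and simply observe that two of the three cubics can be taken to have a common linear or conic factor, reducing the intersection count; then a direct inspection of the residual intersection gives at most $7$ points.

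For the second statement — that for general $B$ the set $\Sigma$ is finite and consists of exactly $7$ reduced points — the plan is a dimension count plus a single explicit example. Exhibiting one quartic $B_0$ whose critical locus is $7$ distinct reduced points (for instance a Fermat-type or a carefully chosen sum of monomials, whose gradient one can diagonalize) shows by semicontinuity that the generic $B$ has $\#\Sigma \ge 7$; combined with the upper bound $\le 7$ from the first part (and the fact that for the example $\Sigma$ is reduced, so nearby fibres are too), this forces exactly $7$ reduced points on a Zariski-open dense set of the $\PP^{14}$ of quartics. The honest obstacle in all of this is the upper bound: the characteristic-$2$ gradient is a degenerate, syzygy-laden net of cubics, so one cannot invoke generic Bézout, and the argument must genuinely use either Hilbert–Burch on the linear syzygy or an explicit coordinate normal form — I expect the bookkeeping of which monomials survive differentiation, and identifying the resulting resolution of the ideal $(B_1,B_2,B_3)$, to be the part that requires care. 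Since this proposition is quoted as already proved in Part I (\cite{cat21}), in the present paper it suffices to cite that reference, but the sketch above indicates the route one would follow to reprove it.
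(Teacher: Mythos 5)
Your reading of the situation is right---the Euler relation $x_1B_1+x_2B_2+x_3B_3=0$ (valid here because $\deg B=4$ is even, so $4B=0$ in characteristic $2$) is the whole point, and the naive B\'ezout count $27$ is irrelevant---but the decisive step of your argument is asserted rather than carried out, and it is exactly where the work lies. The paper does not run Hilbert--Burch on the three-generator ideal. Instead it uses the Euler relation to \emph{discard} one partial: after a general linear change of coordinates (under which the new partials are general linear combinations of the old ones) one may assume that $B_1,B_2$ have no common factor and that $\Sigma\cap\{x_3=0\}=\emptyset$; then $x_3B_3=x_1B_1+x_2B_2$ shows that away from $\{x_3=0\}$ the locus $\Sigma$ coincides with the complete intersection $\{B_1=B_2=0\}$, of length $3\cdot 3=9$. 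The remaining computation identifies the piece of this complete intersection supported on the line $\{x_3=0\}$: writing $B=x_3B'+\beta(x_1,x_2)+q(x_1,x_2)^2$ with $\beta=\sum_{n\ \mathrm{odd}}a_nx_1^nx_2^{4-n}$, one finds $B_1\equiv x_2\,g$ and $B_2\equiv x_1\,g$ modulo $x_3$ for a single quadratic form $g$, so the residual scheme on the line has length $2$, disjoint from $\Sigma$, and $\Sigma$ has length $9-2=7$. This ``complete intersection minus the contribution at infinity'' computation is precisely what your sketch is missing.

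Your Hilbert--Burch alternative is not wrong in spirit: if one knew that the saturated ideal $I_\Sigma$ had minimal resolution $0\to\hol(-4)\oplus\hol(-5)\to\hol(-3)^3\to I_\Sigma\to 0$ (one linear syzygy, namely the Euler one, and one quadratic syzygy), the Hilbert polynomial would indeed give length $7$. But establishing that resolution shape for \emph{every} $B$ with finite critical locus---saturatedness of $(B_1,B_2,B_3)$, exactly three minimal generators, no further linear syzygies---is at least as delicate as the paper's direct computation, and you leave it entirely as an assertion. Your fallback (``two of the three cubics can be taken to have a common linear or conic factor'') is off-track: when $\Sigma$ is finite, a general coordinate change \emph{removes} common factors among the partials rather than producing them. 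Two smaller points: the Fermat quartic you float as a candidate example has identically vanishing gradient in characteristic $2$, so it cannot serve; the paper uses the Klein form $B_0=x_1^3x_2+x_2^3x_3+x_3^3x_1$, whose critical locus consists of the $7$ reduced points $(\varepsilon,1,\varepsilon^3)$ with $\varepsilon^7=1$. Your example-plus-semicontinuity argument for the generic statement does, however, match the paper's.
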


The above result does indeed nicely extend to the case of a homogeneous polynomial of even degree $B \in K [x_1,x_2,x_3]_{2m}$.

As noticed in \cite{cat21}, because of the Euler identity $$x_1 B_1 + x_2 B_2 + x_3 B_3 =0$$
among partial derivatives,
taking coordinates such that the line $\{ x_3=0 \}$ does not intersect $\Sigma$,
it follows that $$\Sigma = \{ x \mid  B_1(x)= B_2(x)=0, x_3 \neq 0\}.$$

For a polynomial of the Klein form 
$$ B_0 : = x_1^{2m-1} x_2 + x_2^{2m-1} x_3 + x_3^{2m-1} x_1,$$
the critical scheme is defined by 
$$ x_i^{2m-1} = x_{i+1}^{2m-2} x_{i+2} \;\;  \Longrightarrow  \;\; x = (\e, 1, \e^{2m-1}), \ \e^{(2m-2)(2m-1) + 1} = 1.$$

Letting $s$ be  the general number of strange points ($s: = |\Sigma|$),
we have therefore that, setting $d = 2m$, $s$ lies in the interval
$$ (d-1)(d-2)+ 1= (d-1)^2 - (d-2)   \leq s \leq (d-1)^2 .$$

{
\begin{prop}
The  number of strange points $s: = |\Sigma|$ ($\Sigma : = \{ \nabla B(x)=0\}$) of a  general homogeneous
polynomial $B(x_1, x_2,x_3)$ of even degree $d$
is   equal to $ s = (d-1)(d-2)+ 1 $.

Whenever the 
 subscheme $\Sigma$ 
 is finite,
 its length equals $s$.
\end{prop}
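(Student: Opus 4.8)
The plan is to compute $\operatorname{length}(\Sigma)$ by applying B\'ezout's theorem to the two plane curves $\{B_1=0\}$ and $\{B_2=0\}$ of degree $d-1$, with a correction term supported on a line. So assume $\Sigma$ is finite and, as already noted in the excerpt, choose coordinates so that the line $L:=\{x_3=0\}$ meets neither $\Sigma$ nor, equivalently, the common zero scheme $V(B_1,B_2,B_3)$; a generic line works since $\Sigma$ is finite. On the affine chart $\{x_3\neq 0\}$ the Euler relation $x_1B_1+x_2B_2+x_3B_3=0$ recovers $B_3$ from $B_1,B_2$, so there $V(B_1,B_2,B_3)=V(B_1,B_2)$; since $\Sigma$ lies entirely in this chart, $\Sigma=Z\setminus L$ where $Z:=V(B_1,B_2)\subset\PP^3$ is thought of in $\PP^2$, and $Z=\Sigma\sqcup Z'$ with $Z':=Z\cap L$ a closed subscheme of $L\cong\PP^1$.

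The heart of the argument is to identify $Z'$. Restricting Euler's identity to $L$ gives $x_1 B_1|_L+x_2 B_2|_L=0$ (the $x_3B_3$ term dies on $L$), and since $x_1,x_2$ are coprime this forces $B_1|_L=x_2 C$ and $B_2|_L=x_1 C$ for a single form $C$ of degree $d-2$. This $C$ cannot vanish identically: otherwise $x_3\mid B_1$ and $x_3\mid B_2$, whence $L$ would meet $V(B_1,B_2,B_3)\supseteq L\cap\{B_3=0\}\neq\emptyset$, contradicting the choice of coordinates. Consequently the ideal sheaf of $Z'=V(B_1|_L,B_2|_L)$ on $L$ equals $(x_2C,x_1C)=(x_1,x_2)\cdot(C)=(C)$, because $(x_1,x_2)$ is the unit ideal at every point of $\PP^1$; hence $Z'$ is the effective divisor $V(C)$ of degree $d-2$ on $L$.

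It remains to add up lengths. Since $\Sigma$ is finite and $Z'$ is finite, $Z$ is finite, so $B_1$ and $B_2$ have no common factor, and B\'ezout gives $\operatorname{length}(Z)=(d-1)^2$. Therefore
\[
\operatorname{length}(\Sigma)=\operatorname{length}(Z)-\operatorname{length}(Z')=(d-1)^2-(d-2)=(d-1)(d-2)+1,
\]
which is the second assertion. For the first assertion, both finiteness of $\Sigma$ and reducedness of $\Sigma$ are open conditions on the coefficients of $B$, so it suffices to exhibit one $B$ realizing them. For the Klein form $B_0=x_1^{d-1}x_2+x_2^{d-1}x_3+x_3^{d-1}x_1$ the computation recalled in the excerpt shows that $\Sigma_{B_0}$ consists of exactly the points $(\e,1,\e^{d-1})$ with $\e^{(d-1)(d-2)+1}=1$, a set of cardinality $(d-1)(d-2)+1$ since this number is odd hence prime to the characteristic, and that none of its coordinates vanish. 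In particular $\Sigma_{B_0}$ is finite, so by the second assertion it has length $(d-1)(d-2)+1$; having that many distinct points, it must be reduced. Hence the general $B$ has $\Sigma$ reduced of length $s=(d-1)(d-2)+1$, so $|\Sigma|=s$.

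The step I expect to be the main obstacle is the scheme-theoretic bookkeeping of the second paragraph: one must check that $Z=V(B_1,B_2)$ genuinely decomposes as the disjoint union $\Sigma\sqcup Z'$ with $Z'$ an honest subscheme of $L$ (no embedded component sticking out of $L$), and that the ideal $(x_1C,x_2C)$ collapses to $(C)$ on $\PP^1$. The latter is precisely where one uses that the coordinates are generic, so that $B_1,B_2,B_3$ have no common zero on $L$ and in particular $C\not\equiv 0$ (which also rules out $B_1\equiv 0$ or $B_2\equiv 0$); once this is secured, B\'ezout and the openness of the relevant loci do the rest.
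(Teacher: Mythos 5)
Your proof is correct and follows essentially the same route as the paper's: both decompose the complete intersection $V(B_1,B_2)$, of length $(d-1)^2$, into $\Sigma$ and a residual scheme of length $d-2$ supported on the generic line $L=\{x_3=0\}$, the latter cut out by the common degree-$(d-2)$ factor $C$ of $B_1|_L$ and $B_2|_L$; the only difference is that you extract $C$ from the Euler identity restricted to $L$, where the paper computes it explicitly from the expansion $B=x_3B'+\beta+q^2$ in characteristic $2$. The one step you flag but do not fully close --- that $Z$ has no component at a point of $L$ sticking out of $L$ --- is settled in one line by the same Euler identity: $x_3B_3=x_1B_1+x_2B_2\in(B_1,B_2)$, and $B_3$ is a unit at every point of $Z\cap L$ by your choice of coordinates (such a point lies on no common zero of $\nabla B$ on $L$), so $x_3\in(B_1,B_2)$ locally there and $\mathcal{O}_{Z,p}=\mathcal{O}_{Z\cap L,p}$.
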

\begin{proof}

 To show this, two steps suffice:

I) 
 if $\Sigma$ is finite, then the scheme  $ \{ x \mid B_1(x)= B_2(x)=0\}$ is finite
 for general choice of coordinates, and $\Sigma \cap \{ x_3=0\}$ is empty in general;

II) the subscheme $ \{ x| B_1(x)= B_2(x)= x_3 =0\}$, if we  write
$$ B = x_3 B' + \be ( x_1, x_2) + q(x_1, x_2)^2, \ \be ( x_1, x_2) = \sum_{n \ \text{odd}} a_n x_1^{n} x_2^{2m-n},$$
equals  the subscheme 
$$ \left\{ x\, \middle\vert \, x_3 =  \sum_{n \ \text{odd}} a_n x_1^{n-1} x_2^{2m-n} =  \sum_{n \ \text{odd}} a_n x_1^n x_2^{2m-n-1} =0 \right\} = $$
$$  \left\{ x \, \middle\vert \, x_3 = x_2 \left( \sum_{n \ \text{odd}} a_n x_1^{n-1} x_2^{2m-n-1}\right) = x_1 \left( \sum_{n \ \text{odd}} a_{n-1} x_1^{n-1} x_2^{2m-n-1}\right) =0 \right\}.$$

I) holds since changing variables we get that the  new partials are general linear combinations of the 
old partials: if $\Sigma$ is finite then we can keep $B_1$ fixed and vary $B_2$ so that it has no common factor with
$B_1$; hence the result holds for general choice of linear coordinates, and  the rest is obvious.

Our result follows then  from I) and II), since then the scheme $\Sigma$ is disjoint from the length $(d-2)$ scheme  
$$\left\{ x_3=  \sum_{n \ \text{odd}} a_{n-1} x_1^{n-1} x_2^{2m-n-1} =0 \right\},
$$
 and we conclude
since their union is the complete intersection subscheme  $ \{ x| B_1(x)= B_2(x)=0\}$, which has length $(d-1)^2$.

\end{proof}
}
\begin{rem}
 A  more general result (also valid in other characteristics) is  contained in Theorem 2.4 of \cite{liedtkecan},
whose formulation,  however, does neither  mention derivatives nor critical sets.
\end{rem}

\subsection{Supersingular quartics with $7$ $A_3$-singularities}

A first  immediate consequence of the previous result is:

\begin{cor}\label{A3}
For $B$ a  homogeneous polynomial $B \in K [x_1,x_2,x_3]_4$, a normal  quartic surface of the form:
$$ X : = \{ (x,z) \mid  z^4 + B(x) = 0 \}$$ has at most $7$ singular points.

If $B$ is  general, $X$  has 7 $A_3$-singularities.
\end{cor}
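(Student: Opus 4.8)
The plan is to analyze the surface $X = \{z^4 + B(x) = 0\} \subset \PP^3$ directly via its partial derivatives. Since we are in characteristic $2$, $\partial F/\partial z = 4z^3 = 0$ identically, so $\mathrm{Sing}(X)$ is cut out by $F = B_1(x) = B_2(x) = B_3(x) = 0$. By the Euler identity $x_1B_1 + x_2B_2 + x_3B_3 = 0$, the three partials $B_i$ have a dependent relation, and one checks that a singular point $(x_0:z_0)$ of $X$ projects to a point of the critical locus $\Sigma = \{\nabla B = 0\}$ in the plane $\PP^2 = \{z = 0\}$ (one must verify $X$ has no singular points \emph{on} that plane for general $B$, but $B$ general is precisely the hypothesis, and the Klein-type argument of the previous Proposition shows $\Sigma$ is disjoint from a chosen line). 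Conversely, over each strange point $x_0 \in \Sigma$ there is a unique value $z_0$ with $z_0^4 = B(x_0)$ (taking the fourth root, which is a bijection on $K$ since $\mathrm{char}\,K = 2$ makes Frobenius bijective on the algebraically closed $K$), giving exactly one singular point of $X$. Hence $|\mathrm{Sing}(X)|$ equals the length (or cardinality, when finite) of $\Sigma$, which by Proposition \ref{7} is at most $7$, and equals $7$ for general $B$. This gives the first assertion and the count in the second.

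Next I would identify the \emph{type} of each singularity for general $B$. Fix a strange point, put it at the origin in affine coordinates, so locally $B = q(x_1,x_2)^2 + (\text{order} \geq 3)$ with $q$ a quadratic form (the square comes from the fact that all monomials of $B$ with nonvanishing gradient at a general critical point must have even exponents in the relevant sense — more precisely, the Hessian/Taylor expansion argument used in Proposition \ref{7}). For $B$ general one expects $q$ to have rank $2$ and the cubic part of $B$ to be in general position, so the affine equation of $X$ becomes $z^4 + (\text{quadratic})^2 + (\text{cubic}) + \cdots = 0$; completing, in characteristic $2$, one massages this into the normal form $z^4 + (\text{smooth conic in } x_1, x_2)^2 = w^2 + (\text{higher})$ and recognizes an $A_3$ point $xy = z^4$ (equivalently $u^2 + v^4$ type after the char-$2$ coordinate change). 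This is the step where a short explicit local computation is unavoidable, but it is routine given the genericity. The fact that $z \mapsto z^4$ is the source of the $A_3$ (rather than $A_1$) is the conceptual point: the inseparable quartic map in the $z$-direction forces the multiplicity-$4$ behaviour transverse to the conic.

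I expect the main obstacle to be the local normal-form computation showing the singularity is exactly $A_3$ and not something worse (a higher $A_n$, or a non-rational double point) for \emph{general} $B$ — one must check that the cubic (and if necessary quartic) terms of $B$ at a strange point are generic enough to truncate the singularity at $A_3$, and simultaneously that normality of $X$ holds (which Proposition \ref{9} and the earlier discussion reduce to checking the singularities are rational double points, so one really only needs the upper bound $A_n$ with $n$ finite plus the precise determination $n = 3$). A clean way to organize this: exhibit one explicit $B$ (e.g. a Klein-type quartic $B_0 = x_1^3x_2 + x_2^3x_3 + x_3^3x_1$, whose $7$ strange points are listed explicitly in the text) for which a direct check gives $7$ $A_3$-points, then invoke openness of the condition ``$X$ has $7$ $A_3$-singularities'' in the family of such $X$ together with Proposition \ref{7} (which already pins the count at $7$ generically) to conclude it holds for general $B$. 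Finally, normality of the general $X$ follows since isolated $A_3$ singularities are in particular normal, completing the proof.
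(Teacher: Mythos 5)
Your first paragraph (the count) is correct and is essentially the paper's argument: $\partial F/\partial z\equiv 0$, so $\mathrm{Sing}(X)$ is the preimage of the critical scheme $\Sigma=\{\nabla B=0\}$ under the projection $(x,z)\mapsto x$, and since the Frobenius $z\mapsto z^4$ is bijective on $K$ each point of $\Sigma$ carries exactly one singular point of $X$; Proposition \ref{7} then gives at most $7$, with exactly $7$ reduced points for general $B$. (Your worry about singular points lying on $\{z=0\}$ is a non-issue: a strange point $x_0$ with $B(x_0)=0$ simply contributes the singular point $(x_0,0)$, and the bijection is unaffected.)

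The identification of the singularity type, however, contains a genuine error. You assert that at a strange point the local expansion is $B=q(x_1,x_2)^2+(\text{order}\geq 3)$ with $q$ a quadratic form; this conflates the \emph{global} decomposition $B=q(x)^2+\beta(x)$ into even- and odd-exponent monomials with the \emph{local} Taylor expansion at the critical point, and as written it is incoherent (if $q$ is quadratic then $q^2$ has order $4$, leaving no degree-two term, so the point would not even be a double point of $X$). Worse, if the local quadratic part of $B$ at the strange point \emph{were} a perfect square $\ell^2$, its gradient would vanish identically in characteristic $2$ and the point would be a \emph{non-reduced} point of $\Sigma$ --- exactly the degenerate case excluded by genericity. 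The correct (and very short) argument, which is the paper's, runs the other way: reducedness of $\Sigma$ at $P$ means the linear parts of $B_1,B_2$ are independent there, i.e.\ the local quadratic part of $B$ has a nonzero cross term; a formal change of coordinates then gives $B=a^4+uv$ with $a^4=B(P)$ (using that $K$ is perfect), so $X$ is locally $(z+a)^4=uv$, visibly an $A_3$. Your proposed fallback (verify one explicit $B_0$ and invoke openness) does not circumvent this: ``has $7$ singularities of type exactly $A_3$'' is not open per se, and the natural way to make the semicontinuity argument work is precisely via reducedness of $\Sigma$ together with the local normal form above --- at which point the direct argument is already complete.
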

\begin{proof}
The singular points of $X$ are in bijection with the critical set $\Sigma$ of $B$, which consists of $7$
reduced points  for $B$ general. Hence at these points there are local coordinates $u,v$ such that $ B = a^4 + uv$,
hence the local equation of $X$ is $ (z+a)^4 = uv$, and 
we have an $A_3$-singularity.

\end{proof}

\subsection{Quartics with $14$ nodes blowing up to $7$ lines in the plane under the Gauss map}

A second immediate consequence concerns the quartics with dual surface equal to a plane, and with
 a singular point such that the second order term 
$Q$ of the Taylor development (see formula  \ref{eq:plane}) is  equal to the square of a linear form.

\begin{prop}\label{special}
Consider a normal quartic of equation 
$$ X = \{ (x,z) \mid  z^4 + z^2 x_1^2 + B(x)=0\},$$
where $B$ is a  homogeneous polynomial of degree $4$.
Then $X$ has at most $14$ singular points, inverse image of the (at most $7$) points in the plane where $\nabla B(x)=0$.

For general choice of $B$, $X$ has exactly $14$ nodes as singularities.

The Gauss map $\ga$ is inseparable, it factors through the projection $ (x,z) \mapsto x$ and a degree two map
 $\PP^2 \ra \PP^2$,  $ x \mapsto \nabla B(x)$.
 
 {In particular, if $X$ has $14$ singular points, these are nodes.}

\end{prop}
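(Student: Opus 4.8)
The plan is to analyze the quartic $X = \{z^4 + z^2 x_1^2 + B(x) = 0\}$ directly via its partial derivatives in characteristic $2$. First I would compute $\nabla F$ where $F = z^4 + z^2 x_1^2 + B(x)$. In characteristic $2$ we have $F_z = 4z^3 + 2z x_1^2 = 0$ identically, $F_{x_1} = z^2 \cdot 2x_1 + B_1 = B_1(x)$, and $F_{x_i} = B_i(x)$ for $i = 2,3$. Hence $\mathrm{Sing}(X) = X \cap \{B_1 = B_2 = B_3 = 0\}$, and the singular locus projects onto $\Sigma = \{\nabla B(x) = 0\}$ via $(x,z) \mapsto x$. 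Since $F_z \equiv 0$, over each point $x_0 \in \Sigma$ the fibre in $X$ is cut out by the single equation $z^4 + z^2 (x_0)_1^2 + B(x_0) = 0$ in the variable $z$; because we are in characteristic $2$ this polynomial is $(z^2 + c_1 z + c_0)^2 = (z^2 + c_1 z + c_0)^2$ — more precisely $z^4 + z^2 a^2 = (z^2 + za)^2$ is not quite right, so I should instead observe $z^4 + z^2 x_1^2 + B(x_0) = (z^2 + x_1 z)^2 + \text{(lower)}$... Let me be careful: the correct statement is that $z^4 + z^2 x_1^2 + B(x_0)$, as a polynomial in $z$ over the perfect field $K$, factors as $(z^2 + \alpha z + \beta)(z^2 + \alpha z + \beta')$ or has a repeated root structure; in any case it has at most $2$ distinct roots, giving at most $14$ singular points total, which is the first assertion, modulo Proposition \ref{7} bounding $|\Sigma| \le 7$.

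Next, for the Gauss map: since $F_z \equiv 0$, the Gauss map $\ga(x,z) = [F_{x_1} : F_{x_2} : F_{x_3} : 0] = [B_1(x) : B_2(x) : B_3(x) : 0]$ depends only on $x$, so it factors as $X \xrightarrow{\pi} \PP^2 \xrightarrow{\nabla B} \PP^2$ where $\pi(x,z) = x$. The map $\pi$ has degree $2$ (generic fibre two points, as computed above) and $\nabla B : x \mapsto \nabla B(x)$ is a rational map of $\PP^2$ of degree... I would need to check that $\nabla B$ itself has degree $2$ as a rational map — the coordinates $B_i$ are cubics, but since $x_1 B_1 + x_2 B_2 + x_3 B_3 = 0$ by Euler and in characteristic $2$ there is additionally $B_i = (\partial q/\partial x_i)^2 \cdot(\ldots)$-type structure, the map $\nabla B$ drops degree. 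Actually the cleanest route, following the paper's own earlier analysis (the discussion around Proposition \ref{7} and the Klein form computation), is that writing $B = q(x)^2 + (\text{odd part})$, the gradient $\nabla B$ is (up to the square Frobenius part) governed by a degree-$2$ map; I would cite or adapt that the composite Gauss map $X^\vee$ being a plane forces $\deg(\ga) = \deg(\pi)\cdot\deg(\nabla B) = 2 \cdot (\text{something})$, and I claim $\deg(\nabla B) = 2$ so that $X^\vee$ is the full plane $\{x_4 = 0\}$ and $\ga$ is inseparable of degree... — but actually for the statement I only need that $\ga$ is inseparable and factors as claimed, which follows immediately from $F_z \equiv 0$ plus $\pi$ being the stated degree-$2$ map.

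For the genericity statement — that for general $B$ the $14$ points are nodes — I would argue as in Corollary \ref{A3}: for general $B$, $\Sigma$ consists of $7$ reduced points (Proposition \ref{7}), and near such a point there are local analytic coordinates $u,v$ on $\PP^2$ with $B = (\text{const}) + uv$; pulling back to $X$, the local equation becomes $z^4 + z^2 x_1^2 + c + uv = 0$. For generic $B$ the value $(x_0)_1 =: a \ne 0$ at each of the seven points, so $z^4 + a^2 z^2 + c = (z^2 + az + \sqrt{c+\ldots})(\ldots)$ has two distinct simple roots $z = \zeta_1, \zeta_2$ (the polynomial and its $z$-derivative $2z^3 + \ldots$, wait the derivative is $0$ in char $2$ — so I must instead check distinctness of roots of the quartic in $z$ directly: $z^4 + a^2 z^2 + c$ has distinct roots iff it is separable iff ... in char 2, $z^4 + a^2 z^2 + c = (z^2 + bz + d)^2$ would force $a = 0$, contradiction, so it is not a perfect square; combined with it being a polynomial in $z^2$... actually $z^4 + a^2 z^2 + c = w^2 + a^2 w + c$ with $w = z^2$, which has two distinct roots $w_1 \ne w_2$ in $w$ since $a \ne 0$, each giving a unique $z_i = \sqrt{w_i}$, so exactly $2$ simple roots $z_1, z_2$). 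At each such $(x_0, z_i)$ the local equation is $(z - z_i)^2 \cdot(\text{unit}) + uv + (\text{higher order in } z - z_i) = 0$, which after completing is $w'^2 + uv = 0$ up to higher order — an $A_1$ singularity, i.e. a node.

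The last sentence — if $X$ has $14$ singular points then they are nodes — is the main obstacle, since it must hold for \emph{every} such $X$ with $14$ singularities, not just generic ones. Here the argument is: if $|\mathrm{Sing}(X)| = 14$ then $|\Sigma| = 7$ with $\Sigma$ reduced (by the bound $s \le 7$ of Proposition \ref{7} and the fibre-size $\le 2$), \emph{and} every fibre of $\pi$ over $\Sigma$ has exactly $2$ points. A fibre $z^4 + z^2 a^2 + c$ has $2$ points iff $a \ne 0$ (if $a = 0$ it is $z^4 + c = (z + \sqrt[4]{c})^4$, a single point). So at each of the $7$ points $x_0 \in \Sigma$ we have reducedness of $\Sigma$ (hence local coords $u,v$ with $B \equiv \text{const} + uv$ to second order, possibly plus higher terms) and $a = (x_0)_1 \ne 0$. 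Then the computation above gives that each of the $14$ points is a node. The delicate point I expect to fight with is verifying that reducedness of the \emph{scheme} $\Sigma$ at $x_0$ genuinely gives an $A_1$ (and not worse) singularity on $X$ — one must track how the quadratic part of $B$ at $x_0$ interacts with the $z^4 + z^2 x_1^2$ part, in particular ruling out that the tangent cone of $X$ at the singular point degenerates; this is where the hypothesis that $Q = x_1^2$ (not a more general quadric) and $a \ne 0$ both get used, and it should be handled by an explicit local Taylor expansion as in the proof of Proposition \ref{9}.
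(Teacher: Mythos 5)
Your proposal is correct in substance, and for the first three assertions (the bound of $14$, the factorization of the Gauss map, and the genericity statement) it follows essentially the paper's route. Where you genuinely diverge is in the final assertion that $14$ singular points must all be nodes. The paper deduces this from the degree formula: the Gauss map has degree $8$ --- the projection $(x,z)\mapsto x$ has degree $4$ (not $2$) as a map of function fields, even though its generic fibre has only two distinct points, and $x\mapsto\nabla B(x)$ has degree $3^2-7=2$ because the cubic system $(B_1,B_2,B_3)$ has base locus $\Sigma$ of length $7$ --- so that \eqref{eq:deg} gives $28\geq 2\nu+b+6u$, and $\nu=14$ forces $b=u=0$. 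You instead argue directly: $14$ singular points force $\Sigma$ to consist of $7$ distinct points with every fibre of the projection of size $2$ (hence $x_1\neq 0$ on $\Sigma$); reducedness of the scheme $\Sigma$ gives local coordinates in which the quadratic part of $B$ is $uv$ modulo squares; and the local expansion exhibits each singular point as $\zeta^2+uv+\text{h.o.t.}=0$, a node. This is a legitimate and more self-contained alternative. The cleanest way to finish your local step is to write $F=(z^2+zx_1)^2+B(x)$ and observe that $(x,z)\mapsto(x,\,s=z^2+zx_1)$ is \'etale of degree $2$ wherever $x_1\neq 0$ (since $\partial s/\partial z=x_1$), so the singularities of $X$ there are formally isomorphic to those of the inseparable double cover $\{s^2=B(x)\}$, which at a reduced point of $\Sigma$ are visibly of type $A_1$; this disposes of the ``delicate point'' you flag at the end.

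Two small repairs are needed. First, to conclude that the scheme $\Sigma$ is reduced from $|\Sigma|=7$ you need the scheme-theoretic statement that the length of $\Sigma$ equals $7$ whenever $\Sigma$ is finite; this is not Proposition \ref{7} (a purely set-theoretic bound) but the proposition immediately following it in the paper. Second, your justification of $\deg(\nabla B)=2$ via ``$B=q^2+$ odd part'' is not a proof; the correct argument is the base-locus count $3^2-\operatorname{length}(\Sigma)=9-7=2$. Relatedly, your claim that $\pi$ has degree $2$ conflates the cardinality of the generic fibre with its length; this would falsify the paper's degree-formula argument (which needs $\deg(\gamma)=4\cdot 2=8$), but it does not affect your direct argument, which never invokes \eqref{eq:deg}.
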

\begin{proof}
The Gauss map is given by 
$$ \ga(x,z) = (\nabla B(x), 0).$$
The singular points are the inverse image of the critical locus of $B$, $\Sigma = \{ x \mid  \nabla B(x)=0\}$.
$\Sigma$ consists of at most $7$ points by Proposition \ref{7}.

For general $B$ we get $7$ reduced points, and since $z^2$ is the root of a quadratic polynomial with derivative
$x_1^2$, if the line $\{ x_1=0\}$ does not meet the locus $\Sigma $, we get $14$ nodes as singularities.

Observe finally that $ x \mapsto \nabla B(x)$ has degree $2$ since the base locus consists of the  length $7$ subscheme $\Sigma$.

The last assertion follows now easily from the fact that the Gauss map has degree $8$, and 
from the Gauss estimate \ref{eq:deg}
of Proposition \ref{gaussestimate}.

\end{proof}

\subsection{Quartics with inseparable projection from one node}

This is another specialization, corresponding to the case $a=0$ in \ref{eq:plane}, 
but the conic $Q$ is a smooth one.

\begin{prop}\label{insep}
Consider a quartic of equation 
$$ X = \{ (x,z) \mid    z^2 (x_1 x_2 + x_3^2 )+ B(x)=0\},$$
where $B$ is a  homogeneous polynomial of degree $4$.

{
The Gauss map $\ga$ is inseparable, it factors through the degree two projection $ (x,z) \mapsto x$ and a degree four  map
 $\PP^2 \ra \PP^2$.
 
  $X$ has at most $14$ singularities, the node $P = \{x=0\}$, and the inverse image of  a $0$-dimensional subscheme
  of the plane of length $13$. If $X$ has $14$ singularities, these are nodes; and,  for general choice of $B$, $X$ has $14$ nodes as singularities.}

\end{prop}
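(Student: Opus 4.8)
The plan is to mimic the structure of the proof of Proposition \ref{special}, exploiting the explicit form of the Gauss map in the presence of a vanishing partial derivative. First I would write down the Gauss map: since $\partial F/\partial z = 0$ identically (here $F = z^2 Q(x) + B(x)$ with $Q(x) = x_1x_2 + x_3^2$), the Gauss map factors as
\[
\ga : (x,z) \longmapsto \big(\nabla_x F(x,z),\, 0\big) = \big(z^2\,\nabla Q(x) + \nabla B(x),\, 0\big),
\]
so it kills the fibre direction of the projection $\pi : X \dasharrow \PP^2$, $(x,z)\mapsto x$. The projection $\pi$ has degree two because $z$ appears only through $z^2$. Thus $\ga$ factors through $\pi$ followed by the rational map $\psi : \PP^2 \dasharrow \PP^2$ obtained by substituting $z^2 = -B(x)/Q(x)$ (valid on $X$), i.e.
\[
\psi(x) = Q(x)\,\nabla B(x) - B(x)\,\nabla Q(x)
\]
after clearing denominators — a map given by quartic polynomials, hence of degree four generically. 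This establishes the first sentence of the statement, modulo checking that the four quartics have no common factor for general $B$, which I would verify by exhibiting one good $B$ or by a dimension count on the base locus.

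Next I would locate the singularities. Writing the Taylor development at $P = \{x=0\}$: the point $P$ lies on $X$ (plug in), and the lowest-order term in affine coordinates is $z^2 Q = z^2(x_1 x_2 + x_3^2)$ — wait, more carefully, in the chart where we dehomogenise appropriately, the tangent cone at $P$ is the rank-$3$ quadric $Q$, so $P$ is a node (an $A_1$), contributing exactly $2$ to the Gaussian defect. The remaining singular points of $X$ lie over the critical locus of $\psi$, equivalently over the zero scheme of the $2\times 2$ minors / the locus where $\nabla_x F$ vanishes with $z^2 = -B/Q$. I would argue, exactly as in the strange-points discussion and in Proposition \ref{7}, that this zero-dimensional subscheme of $\PP^2$ has length at most $13$: the base locus of $\psi$ accounts for the drop from the naive Bézout count $4\cdot 3 = 12$ of $\{F_1 = F_2 = 0\}$ restricted suitably — more precisely I would run the degree formula. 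Since $P$ is a node with defect $2$, and $\deg(\ga) = 8$ here (degree two for $\pi$ times degree four for $\psi$), while $\deg(X^\vee) = 1$ (the dual is a plane), the DEGREE–FORMULA gives $8\cdot 1 = 36 - \sum_P (F,F_1,F_2)_P$, so $\sum_P (F,F_1,F_2)_P = 28$; subtracting the node's contribution of $2$ leaves $26$ for the other singular points, each contributing $\geq 2$, hence at most $13$ further singular points, and at most $14$ in total. If equality $\nu = 14$ holds then all $13$ further points also have defect exactly $2$, hence are nodes by part (I) / Remark \ref{rem:non-RDP} of Proposition \ref{gaussestimate}; together with $P$ this gives $14$ nodes.

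Finally, for general $B$ I would show all $14$ are realised and are distinct reduced nodes. The cleanest route: choose coordinates so that the node $P$ is disjoint from the other locus, then analyse the length-$13$ scheme cut out by the equations above on $\PP^2$; show it is reduced for general $B$ by a Bertini-type or explicit-example argument, and that each such point pulls back (via $\pi$, which is étale of degree $2$ away from the branch locus $\{B(x) = 0\}$, equivalently $\{z = 0\}$) to a single reduced point of $X$ that is a node — one checks the tangent quadric of $X$ at such a point is the rank-$3$ quadric $Q$ evaluated there, which is nonsingular for general $B$ since the point avoids the conic $\{Q = 0\}$ generically. The main obstacle I expect is the bookkeeping in the degree formula combined with correctly identifying the length-$13$ scheme: one must be careful that the base locus of $\psi$ (the common zeros of the four quartics, which includes the conic/cubic intersection pattern forced by the Euler relation $\sum x_i F_i = 0$ in characteristic $2$) has exactly the length needed so that $12 + 2 - (\text{base contribution}) $ bookkeeps to $13+1$; equivalently, that the inseparability of $\psi$ does not inflate the count. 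Getting this length count exactly right — rather than merely bounding it — is the delicate point, and I would pin it down by the same two-step argument (reduce to the line at infinity, show disjointness from an auxiliary lower-length scheme, conclude via the complete-intersection length) used in the proof of the proposition on strange points just above.
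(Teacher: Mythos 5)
Your skeleton matches the paper's: factor the Gauss map through the inseparable degree-two projection $(x,z)\mapsto x$, identify the singular points other than $P$ with a zero-dimensional plane subscheme, and feed the degree of the Gauss map into the degree formula to get the bound of $14$ and the nodality at the maximum. The last step (total defect $28$, node contributes $2$, each remaining point contributes $\geq 2$, so at most $13$ more, all nodes if equality) is exactly the paper's argument. But there is a genuine gap at the central quantitative step: you assert $\deg(\psi)=4$ because $\psi=Q\nabla B - B\nabla Q$ is ``given by quartic polynomials, hence of degree four generically.'' This is wrong twice over. First, these polynomials are quintics, not quartics: in characteristic $2$ one has $\psi=\nabla(QB)=(Bx_2+B_1Q,\ Bx_1+B_2Q,\ B_3Q)$, the gradient of a sextic. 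Second, the degree of a rational map $\PP^2\dasharrow\PP^2$ given by forms of degree $d$ is not $d$ but $d^2$ minus the length of the (finite) base locus; here one needs the base locus to have length exactly $25-4=21$. Everything in the proposition — the degree $8=2\cdot 4$ of the Gauss map, the ``at most $13$ further singular points,'' and the nodality conclusion via the defect count — hinges on this single computation, which your proposal does not carry out.

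You do flag the length count as ``the delicate point,'' but the method you propose for it (the two-step complete-intersection-plus-disjointness argument from the strange-points proposition) does not apply: the relevant scheme $\sS=\{Bx_2+B_1Q=Bx_1+B_2Q=B_3=0\}$ is cut out by two quintics and a cubic and is not a complete intersection. The paper instead observes that $\{Bx_2+B_1Q=Bx_1+B_2Q=x_3B_3=0\}$ is the determinantal (Hilbert--Burch) scheme of the $2\times 3$ matrix with rows $(x_1,x_2,Q)$ and $(B_2,B_1,B)$, computes its length to be $17$ from the free resolution by Chern classes, identifies the residual piece on $\{x_3=0\}$ as a length-$4$ scheme $\{x_3=q^2=0\}$, and concludes $|\sS|=17-4=13$; together with the length-$8$ scheme $\{Q=B=0\}$ this gives the base locus of length $21$ and hence $\deg(\ga)=2(25-8-13)=8$. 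Without some substitute for this determinantal computation your proof does not close. (Two smaller points: the degree-two projection is purely inseparable in characteristic $2$, so it is nowhere \'etale — harmless here since fibres are still single points — and the claim about the ``tangent quadric being $Q$ evaluated there'' is neither needed nor correct as stated; nodality at $\nu=14$ already follows from the defect count.)
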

\begin{proof}
The Gauss map is given by 
$$ \ga(x,z) = (z^2 x_2 + B_1, z^2 x_1 + B_2, B_3).$$
Multiplying by $ Q = (x_1 x_2 + x_3^2 )$ and using the equation of $X$, we get that 
$$ \ga(x,z) = \ga'(x) : = ( B  x_2 + B_1 Q, B  x_1 + B_2 Q, B_3 Q).$$
The base point scheme of $\ga'$   in the plane consists of $\{Q=B=0\}$, which {is a length 8 subscheme which}
 in general consists of $8$ reduced points,
and, since outside of this subscheme  we may assume  that $Q(x)\neq 0$ (since $x_1= x_2 = Q=0$ has no solutions), of the locus 
$$\sS : = \{ B  x_2 + B_1 Q=  B  x_1 + B_2 Q= B_3 =0\}.$$
We observe now that every quartic polynomial can be uniquely written as the sum of a square $q^2$ plus
a polynomial of the special form below 
$$ B' : = \sum_{i\neq j}  b_{ij}x_1^3 x_j + \sum_i c_i x_i x_1x_2x_3.$$
Then, working modulo $(x_3)$, we get:
$$ B' \equiv b_{12} x_1^3 x_2 + b_{21} x_2^3 x_1, \;\;\; B'_2 \equiv b_{12} x_1^3  + b_{21} x_2^2 x_1,$$
$$B'_1 \equiv b_{12} x_1^2 x_2   + b_{21} x_2^3.$$
Hence $ x_1 B_1' \equiv x_2 B_2' \equiv B'  \;(\mathrm{mod} \ x_3)$.

Consider the subscheme
$$\sL : = \{ x_3 = B  x_2 + B_1 Q=  B  x_1 + B_2 Q= 0\}.$$
For $B = B'$ we get $\sL = \{ x_3 =0\}$, because  $ Q = x_1 x_2 + x_3^2$.

  If we now add to $B'$ the square of a quadratic form $q^2$,  
$\sL$ coincides with
$$ \{ x_3 =  q^2 x_2 =  q^2 x_1  = 0\} = \{ x_3 = q^2=0\}.$$
This is a subscheme of length equal to $4$.

Our subscheme $\sS$ is the residual scheme with respect to the above length $4$ scheme $\sL$ of the
scheme 
$$\sH \sB  : = \{ B  x_2 + B_1 Q=  B  x_1 + B_2 Q= x_3 B_3 =0\}.$$
$\sH \sB $  is a Hilbert-Burch Cohen-Macaulay subscheme of codimension $2$, corresponding to the
$2 \times 3$ matrix  with rows
$ ( x_1, x_2, Q)$ and $(B_2, B_1, B)$.

Since the ideal $\sI$ of the subscheme has a resolution 
$$ 0 \ra \hol_{\PP^2} (-6) \oplus  \hol_{\PP^2} (-6) \ra  \hol_{\PP^2} (-5)^2 \oplus  \hol_{\PP^2} (-4) \ra \sI \ra 0,$$
an easy Chern class computation shows that the length of $\sH \sB$ is $17$. Moving $q$, the scheme $\sL$ is disjoint
from $\sS$, hence {we conclude that} the length of $\sS$ is $13$.

The degree of the Gauss map is then $ 2 (25 - 8 - 13)= 8$; hence, by the Gauss estimate \ref{eq:deg} of Proposition
\ref{gaussestimate} we get $ 28 \geq 2 \nu + b + u$,
hence for $\nu =14$ we obtain $14$ nodes. 

That the subscheme $\sS$ consists  in general of $13$ distinct points follows from the examples given in \cite{cat21}, step IV of proposition 3.

\end{proof}
\subsection{A $24$-dimensional family of quartics with $14$ nodes}
We pass now to the general case, where $a\neq 0$, and $Q$ is not a double line.

\begin{theo}\label{dual=plane}
Let $K$ be an algebraically closed field of characteristic $2$, and let $X \subset \PP^3_K$ be a general
quartic hypersurface such that the dual variety is a plane.

Then $X$ has $14$ nodes as singularities  and is unirational, hence supersingular.

These quartic surfaces form an irreducible  component,  of dimension $24$,  of the variety of
quartics with $14$ nodes.
\end{theo}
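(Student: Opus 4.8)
The plan is to treat the three assertions in turn: that the general quartic $X$ with dual variety a plane has exactly $14$ nodes; that such an $X$ is unirational and hence supersingular; and that these quartics form a $24$-dimensional irreducible component of the variety of $14$-nodal quartics. For the first assertion I would start from the normal form \eqref{eq:plane}, $X = \{az^4 + z^2 Q(x) + B(x) = 0\}$ with $a \neq 0$ and $Q$ not a double line, and (after rescaling $z$ and absorbing $a$) reduce to $z^4 + z^2 Q(x) + B(x) = 0$. The Gauss map is $\ga(x,z) = (\nabla B(x) + z^2 \nabla Q(x),\, 0)$, so it factors through the projection $(x,z)\mapsto x$. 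Computing the singular locus: differentiating, $\partial_z F = 0$ identically, so $\Sing(X) = \{F = 0,\ \nabla_x(z^2 Q + B) = 0\}$, i.e.\ $z^2 \nabla Q(x) + \nabla B(x) = 0$ together with $z^4 + z^2 Q + B = 0$. Eliminating $z^2$ (using $Q$ smooth, so $\nabla Q$ has no common zero on its vanishing) this becomes a system cutting out a finite scheme in the $(x,z)$–space; by Bézout, together with the degree of the Gauss map computed below, one counts at most $14$ points. For generic $B$ (with $Q$ fixed smooth) I would verify the count is exactly $14$ by exhibiting one explicit example — e.g.\ a Klein-type or a diagonal-plus-perturbation $B$ — and checking the $14$ points are reduced (hence nodes, by the rank argument: at a singular point the Hessian has the shape forcing an $A_1$, since $\partial_z^2 F$ contributes $z^2$-free quadratic terms via $Q$). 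Alternatively, and more cleanly, one computes $\deg(\ga)\deg(X^\vee) = 8$ (the Gauss map has the inseparable factor of degree $2$ from $z\mapsto z^2$ composed with a degree-$4$ rational map $\PP^2\dashrightarrow\PP^2$, $x\mapsto$ the image of $\ga'$ after eliminating $z^2$), whence the Gauss estimate \eqref{eq:deg} gives $36 - 8 = 28 \geq 2\nu + b + 6u$, so $\nu \le 14$, and $\nu = 14$ forces $b = u = 0$, i.e.\ all $14$ singularities are nodes.

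For unirationality, I would use that $X$ is a double cover of $\PP^2$ branched along the (singular) sextic $z^4 + z^2 Q + B$ viewed in the $z\mapsto z^2 =: w$ variable: the substitution $w = z^2$ presents $X$ as (inseparably) dominated by the surface $Y = \{w^2 + wQ(x) + B(x) = 0\}$, which is a conic bundle over $\PP^2$ — in fact $Y$ is rational, being birational over the $x$-plane to a $\PP^1$-bundle once one has a section, and a section exists generically (or after a quadratic base change) because the discriminant $Q^2 - 4B = Q^2$ (characteristic $2$!) is a square, so the conic $w^2 + wQ + B$ always has a rational point over $K(x)$. Thus $Y$ is rational, and $X$ receives a dominant rational map from $Y$ (of degree $2$, Frobenius-type), so $X$ is unirational; by Shioda's theorem a unirational K3 surface in characteristic $p>0$ is supersingular — but here I must first know the minimal resolution $S$ is a K3 surface, which is exactly Proposition \ref{cor:9} applied to the $14$-nodal case (all singularities nodes $\Rightarrow$ rational double points $\Rightarrow$ $S$ is K3), so supersingularity of $S$ follows.

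For the dimension and irreducibility statement, I would parametrize: the quartics with dual variety a plane are, up to $\PGL_4$, exactly those of the form \eqref{eq:plane}, governed by the data $(a, Q, B)$ with $Q\in K[x]_2$ (dimension $6$) and $B\in K[x]_4$ (dimension $15$), total $22$ parameters in the $x$-plane $\PP^2$, plus the choice of the distinguished plane/coordinate $z$, which is a point of $(\PP^3)^\vee$ (dimension $3$), minus the stabilizer overcounting. More precisely the locus in $\PP^{34} = \PP(K[x_0,\dots,x_3]_4)$ swept out has dimension $= 3 + 21 + 1 - (\text{fibre dimension})$; a clean way is: the map sending $(\text{plane } H \in (\PP^3)^\vee,\ a,\ Q,\ B)$ to the quartic is generically finite onto its image after accounting for the $\PGL$-action fixing $H$ acting on $(a,Q,B)$, and a dimension count gives $24$. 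I would then show this image is irreducible (it is the image of an irreducible variety) and that it is a component of the variety $V_{14}$ of $14$-nodal quartics by showing it is not contained in the closure of any larger irreducible family — equivalently, that at a general such $X$ the equisingular deformation space has dimension exactly $24$. Here is where the main obstacle lies: one must compute $\dim T_{[X]} V_{14}$, i.e.\ the dimension of the space of quartic deformations keeping all $14$ nodes (the space $H^0(\PP^3, \mathcal I_Z(4))$ with $Z$ the $14$ nodes imposing independent conditions, modulo $H^0(\PP^3,\mathcal O(4))$ for the trivial ones, minus $15$ for $\PGL_4$), and check it equals $24$; equivalently that the $14$ nodes impose independent conditions on quartics and $h^0(\mathcal I_Z(4)) = 35 - 14 = 21$, giving $21 - 1 + \dim\PGL_4 - \dim(\text{stab}) = 24$. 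Verifying the independence of conditions — that the general configuration of $14$ nodes of our family is in "general position" enough — is the delicate point; I would handle it by the explicit example constructed above, computing the rank of the relevant $14\times 35$ evaluation matrix at one concrete surface and invoking semicontinuity, which simultaneously pins the component down and finishes the theorem.
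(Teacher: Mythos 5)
Your overall architecture matches the paper's: the normal form $z^4+z^2Q(x)+B(x)=0$, a Gauss map of degree $8$ factoring through the distinguished plane, the inseparable double cover of $Y=\{w^2+wQ(x)+B(x)=0\}$ for unirationality, a $21+3=24$ parameter count, and a tangent-space computation for the component claim. But two of your key steps fail as written, both for characteristic-$2$ reasons. First, the rationality of $Y$: you argue that $w^2+wQ+B$ has a root over $K(x)$ because its discriminant $Q^2-4B=Q^2$ is a square. That is the characteristic-$\neq 2$ criterion; in characteristic $2$ the splitting of $w^2+wQ+B$ is governed by Artin--Schreier theory (it splits iff $B/Q^2$ lies in the image of $t\mapsto t^2+t$ on $K(x)$), which fails for general $B$ --- indeed a splitting would make the generic fibre of $Y\to\PP^2$ two rational points and hence $Y$ reducible, whereas $Y$ is an irreducible separable double cover with no section. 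The correct argument (the paper's) is that $Y\subset\PP(1,1,1,2)$ has weighted degree $4$, so $\omega_Y=\hol_Y(-1)$ and $Y$ is a (degree-$2$) Del Pezzo surface, hence rational over the algebraically closed $K$; unirationality of $X$ and supersingularity via Shioda then follow as you say. A smaller gap of the same flavour: the Gauss estimate only gives $\nu\le 14$ (and ``all nodes if $\nu=14$''), and one explicit example does not propagate to the general member, since the number of singular points is not lower semicontinuous under generization; the paper needs the excess-intersection computation showing the singular scheme has length $18-4=14$ generically, combined with semicontinuity in both directions.

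The more serious gap is in the component statement. You propose to compute $\dim T_{[X]}V_{14}$ by showing the $14$ nodes impose independent conditions on quartics, so that $h^0(\sI_Z(4))=35-14=21$. This cannot work: it is numerically inconsistent with your own (correct) dimension count, since independent conditions would force the equisingular locus to have codimension $14$ in $\PP^{34}$, while your family has codimension $34-24=10$ and is contained in $V_{14}$; the $14$ conditions are therefore necessarily dependent on this component. The underlying error is the local model: in characteristic $2$ a node $uv+\zeta^2=0$ has first-order deformations $uv+\zeta^2+c_0+c_1\zeta$ and stays a node iff $c_1=0$ (the $c_0$-term is absorbed by $\zeta\mapsto\zeta+c_0^{1/2}$), so ``passing through the node'' ($c_0=0$) is neither necessary nor sufficient for equisingularity and $h^0(\sI_Z(4))$ is the wrong object. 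What is actually needed, and what the paper does, is to exhibit an explicit $10$-dimensional space of deformations of a specific $X_0$ (of the form $z\sum_i\la_iG(i)(x)+z^3\sum_j\mu_jL_j(x)$) realizing $10$ independent nonzero coefficients $c_1$ at $10$ of the $14$ nodes; this gives $\operatorname{codim} T_{[X_0]}V_{14}\ge 10=\operatorname{codim}\sF$, whence the family $\sF$ is a component. Your plan, as stated, would not produce this bound, and the independence you intend to verify is in fact false.
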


\begin{proof}
The condition that the dual variety is a plane  is  equivalent to the existence 
 of coordinates $(x,z)$ ($x= (x_1, x_2,x_3)$)
such that
$$ X = \{ (x,z) \mid a z^4 + z^2 Q(x) + B(x)=0\}.$$

Since we have already dealt with the special case $a=0$, and with the cases $Q=0$ or the square of a linear form,
let us assume that $$a=1, \;\; Q(x) = x_1 x_2 + \la x_3^2.$$

The Gauss map is given by
$$ \ga(x,z) = z^2 \nabla Q + \nabla B= ( z^2 x_2 + B_1, z^2 x_1 + B_2, B_3).$$
Hence for the singular points $B_3(x)=0$, which implies $x_1 B_1 + x_2 B_2=0$.
Therefore, for the singular points we have
$$ z^2 = \frac{B_1}{x_2} = \frac{B_2}{x_1}.$$
More precisely, if we have a point $x \in \PP^2$ such that $B_3(x)=0$, and $x_2 \neq 0$,
necessarily we have $ z^2 = \frac{B_1}{x_2} $ and we have a singular point if the equation of $X$ is satisfied,
namely if 
$$  z^4 + z^2 Q(x) + B(x)=0 \;\; \Longleftrightarrow \;\;
B_1^2 + B_1 x_2 Q + B x_2^2=0.$$
An easy calculation shows that
$$B_3 = b_{32} x_3^2 x_2 +  b_{31} x_3^2 x_1+ b_{13} x_1^3  + b_{23} x_2^3 + c_1 x_1^2 x_2 + c_2 x_2^2 x_1,$$
which is in the ideal $(x_1, x_2)$ but does not in general vanish neither on $x_1=x_3=0$ nor on $x_2=x_3=0$.

We look now at the points $x$ where 
$$B_3=B_1^2 + B_1 x_2 Q + B x_2^2= x_2 = 0 \;\; \Longleftrightarrow \;\;
B_3= x_2=B_1= 0 :$$
 these are contained in the set 
$\{x_2=  b_{31} x_3^2 x_1+ b_{13} x_1^3 = 0\}$, which consists of  the point $P'':=\{x_2=x_1=0\}$, and the point
$ P' : = \{x_2=  b_{31} x_3^2 + b_{13} x_1^2 = 0\}$. 

 Since 
$$B_1 = b_{12} x_1^2 x_2 +  b_{13} x_1^2 x_3+ b_{31} x_3^3  + b_{21} x_2^3 + c_3 x_3^2 x_2 + c_2 x_2^2 x_3,$$
 $B_1$ does not in general vanish in $P''$,
but it vanishes indeed in  $P'$.

At the point $P'$, for general choice of $B$, $x_1\neq 0$, hence if $P'$ were to correspond to a singular point of $X$, 
we would have  
$$ z^2 = \frac{B_2}{x_1} \Rightarrow B_2^2 + B_2 x_1 Q + B x_1^2 . $$ 
But the right hand side does not in general vanish at $P'$, since $x_1 \neq 0$, and since we can add to $B$ the
square of a quadratic form $q(x)$ 
 without affecting the partial derivatives.

The number of singular points of $X$ is then equal, by the B\'ezout theorem, to the difference between
$18$ and the intersection multiplicity of $B_3$ and $B_1^2 + B_1 x_2 Q + B x_2^2$ at the point $P'$.

In the special case $ B = x_1^3 x_2 + x_2^3 x_3 + x_3^3 x_1 + q^2$, we get the point  
$P'=\{x_2=x_3=0\}$,
and 
$$B_3 =  x_3^2 x_1  +  x_2^3,\;\;  B_1 =   x_1^2 x_2  +  x_3^3 .$$
The curve $\{ B_3=0\}$ has a cusp with tangent $\{x_2=0\}$, so that $x_3$ has order $3$, $x_2$ has order $2$,
hence $B_1^2 + B_1 x_2 Q + B x_2^2$ has order equal to $4$ for general choice of $q$.

By semicontinuity the intersection multiplicity  is in general at most $4$, hence the `number' of singular
points of $X$ is at least  $14$. But in the  special case of proposition \ref{special}  we have exactly $14$
nodes, so $14$ points counted with multiplicity $1$; hence   by semicontinuity in the other direction
we have in general exactly $14$ nodes.

 That $X$ is unirational, hence supersingular by \cite{shiodass}, follows since $X$ is an inseparable double cover
of the surface 
$$ Y =  \{ (x,w) \mid  a w^2 + w Q(x) + B(x)=0\} \subset \PP(1,1,1,2).$$ 
$Y$ has degree $4$, hence $\omega_Y = \hol_Y (-1)$ and $Y$ is a Del Pezzo surface, hence rational.

The  dimensionality assertion follows by a simple parameter counting, $1 + 6 + 15-1=21$ parameters for the
above polynomial equations, plus $3$ parameters for the plane $X^{\vee}$, which in the chosen equations
is the plane $\{z=0\}$.

{Finally, consider the surface  
$$
X_0 : = \{ (x,z) \mid  z^4 + z^2 l(x)^2  + B_0(x)=0\}, \;\; 
B_0 = x_1^3 x_2 + x_2^3 x_3  + x_3^3 x_1,
$$
and consider the deformations obtained by adding  to the equation of $X_0$ a polynomial 
$$ f : = z \sum_{i=1}^7 \la_i G(i)(x) + z^3 \sum_{j=1}^3 \mu_j L_j(x),$$
where $G(1), \dots, G(7)$ are polynomials of degree $3$ such that $G(i)$
is vanishing at exactly all the critical points
of $B_0$ except the $i$-th point $P_i$, and the linear forms $L_j(x)$ vanish on the points $P_i, 1 \leq i \leq 3, \ i \neq j.$

The polynomial $f$ belongs to a $10$-dimensional vector subspace, and we shall show now that
we get  independent smoothings of ten of the nodes: 
one for each of the pairs of  singular points $P_i',  P_i''$ lying over $P_i$, for $ i=4,5,6,7$,
and two  over each $P_i$ for $i=1,2,3$.

Then, if we choose one  of the two singular points $P_i',  P_i''$ lying over $P_i$, for $ i=4,5,6,7$,
say $P'_i$, 
the map to the local deformation space of the
singularity is of the form (in local coordinates $u, v, \zeta : = (z + z'_i)$ such that $B = uv + {\rm constant}$)

$$uv + (z+z'_i)^2 + \la_i z  + z \sum_{j=1}^3 \mu_j L_j(P_i) (z'_i)^2,$$
since $z^3 = (\zeta + z_i')^3 \equiv z (z'_i)^2 \ (\mathrm{mod} \ \zeta^2)$;
whereas for $j=1,2,3$ the map is given by
$$uv + (z+z'_j)^2 + \la_j z  + z \mu_j  (z'_j)^2,$$
respectively by
$$uv + (z+z''_j)^2 + \la_j z  + z \mu_j  (z''_j)^2.$$

Observe that, if we have a node of equation $ uv + \zeta^2=0$, the local deformations are of the form 
$$ uv + \zeta^2 + c_0 + c_1 \zeta=0,$$
and we have a smoothing iff $c_1 \neq 0$.

It is easily seen that  the deformation yields ten  independent smoothings of the ten nodes
$P_1', \dots, P_7', P_1'', P_2'', P_3''$, hence  it follows that the variety 
of quartics with $14$ nodes, at the point $X_0$, has Zariski tangent space of codimension at least $10$ in the space of all quartics. 
Since the space of all quartics has dimension $34$, and our family is irreducible of codimension $10$,
it follows that at the point $X_0$ our family coincides with the variety of quartics with $14$ nodes, and our family is a component of
this variety.
}

\end{proof}

\begin{rem}
Since our family yields a   dimension 9 locus in the moduli space, we have found an irreducible component of the moduli space
of supersingular K3 surface with a quasi-polarization of degree $4$.
This may be compared to Shimada's results on double sextics
where there is an irreducible component with 21 nodes \cite{Shimada}.
\end{rem}

\subsection{When  the  minimal resolution is a K3 surface}

Concerning the degree of the Gauss map, which is in the above situation generally equal to 8, 
we have a weaker result, which is sufficient, as we will see in Proposition \ref{cor:9},
 for the purpose of showing that the minimal resolution of $X$ is always a K3 surface 
 if the number of singular points is at least 13.
Equivalently, all singularities are rational double points.
 
 \begin{lemma}\label{dp}
 Assume that the normal quartic $X$ has the following equation
 $$ X = \{ (x,z) \; |  \; z^4 + z^2 Q(x) + B(x)=0\},$$
 where the quadratic form  $Q$ is not the square of a linear form.
 
 Then the degree of the Gauss map is at least 4  or $X$ has at most 12 singular points.
 \end{lemma}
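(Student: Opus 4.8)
Since $\partial F/\partial z\equiv 0$ in characteristic $2$, the image $X^\vee$ of the Gauss map $\gamma$ lies in the plane $\{y_4=0\}\subset(\PP^3)^\vee$. We may assume $X$ has $\nu\ge 13$ singular points (otherwise the conclusion holds), so that $X^\vee$ is an irreducible surface by Proposition \ref{gaussestimate}(IV); hence $X^\vee$ equals that plane and $\deg(X^\vee)=1$. Moreover $\gamma$ factors as $X\xrightarrow{\pi}Y\xrightarrow{\gamma'}\PP^2$, where $\pi\colon (x,z)\mapsto (x,z^2)$ exhibits $X$ as the purely inseparable degree-$2$ cover of
$$Y:=\bigl\{(x,w)\mid w^2+wQ(x)+B(x)=0\bigr\}\subset\PP(1,1,1,2),$$
and $\gamma'(x,w)=(wQ_1+B_1:wQ_2+B_2:wQ_3+B_3)$ with $Q_i=\partial Q/\partial x_i$, $B_i=\partial B/\partial x_i$. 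Thus $\deg(\gamma)=2\deg(\gamma')$ is even and $\ge 2$, so $\deg(\gamma)<4$ would force $\gamma'$ to be birational. It therefore suffices to show that $\gamma'$ is not birational.

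I would begin by normalising $Q$. As $Q$ is not the square of a linear form, its polar (alternating) form has rank $2$, so after a linear change of the $x_i$ we may take $Q=x_1x_2+\lambda x_3^2$; then $\nabla Q=(x_2,x_1,0)$ and $\gamma'(x,w)=(wx_2+B_1:wx_1+B_2:B_3)$. The key observation is that the last coordinate $B_3=\partial B/\partial x_3$ does not involve $w$. Writing $\phi\colon Y\to\PP^2$, $(x,w)\mapsto [x]$, for the degree-$2$ projection (which is \emph{wildly} ramified along the conic $\{Q=0\}$ in characteristic $2$), the curve $\widetilde C:=\phi^{-1}(\{B_3=0\})\subset Y$ is therefore carried by $\gamma'$ into the line $\{u_3=0\}\subset(\PP^2)^\vee$.

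Now suppose $\gamma'$ were birational. Resolving $\gamma'$ through a smooth surface, it restricts on every curve of $Y$ that it does not contract to a birational morphism onto its image; hence every component of $\widetilde C$ is either contracted by $\gamma'$ (and so rational) or maps birationally onto the line $\{u_3=0\}$ (and so again rational). But for general $B$ the cubic $\{B_3=0\}$ is irreducible and meets $\{Q=0\}$ in six reduced points lying off its singular point, so the double cover $\widetilde C\to\{B_3=0\}$, being wildly ramified at the points over these six points, has normalisation of genus $\ge 5$ by Riemann--Hurwitz --- a contradiction. This already proves $\deg(\gamma)\ge 4$ for general $B$.

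What remains is the degenerate locus: those $B$ for which every component of $\widetilde C$ happens to be rational --- e.g.\ $\{B_3=0\}$ contains a line, or splits into three lines, or contains $\{Q=0\}$, or meets it very non-transversally (note $Y$ itself is automatically irreducible, since $X$ is). For such $B$ I would argue directly: via $\pi$ the singular points of $X$ correspond bijectively to the base points of $\gamma'$ on $Y$, which lie on $\{B_3=0\}\cap\{B_2^2+B_2x_1Q+Bx_1^2=0\}$, a scheme of B\'ezout length $18$; isolating, in each degenerate configuration, the unavoidable ``spurious'' part of this scheme --- supported on $\{x_1=0\}$ and at the points forced by the special position of $\{B_3=0\}$ relative to $\{Q=0\}$, exactly along the lines of Proposition \ref{insep} and Theorem \ref{dual=plane} --- shows it is large enough to bring $\nu$ below $13$. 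The main obstacle is precisely this last step: one must check, ideally uniformly rather than through a long case analysis, that a rational $\widetilde C$ always comes with enough spurious base locus, equivalently that $\deg(\gamma')=1$ is incompatible with $\nu\ge 13$.
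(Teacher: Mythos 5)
Your reduction is sound as far as it goes: the factorization $\gamma=\gamma'\circ\pi$ through the inseparable degree-$2$ cover $X\to Y$, the identity $\deg(\gamma)=2\deg(\gamma')$, and the normal form $Q=x_1x_2+\lambda x_3^2$ all agree with the paper, which likewise reduces the lemma to understanding the case $\deg(\gamma')=1$. Your observation that $\phi^{-1}(\{B_3=0\})$ maps into a line, and the wild-ramification/genus computation showing this curve cannot be rational, is a genuinely different and attractive way to rule out birationality of $\gamma'$ --- but only for \emph{general} $B$.

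That is the gap, and you have named it yourself: the lemma must hold for \emph{every} normal quartic of the stated form, because it is invoked in Proposition \ref{cor:9} for an arbitrary $X$ with at least $13$ singular points whose dual surface is a plane. The quartics one actually needs to control there are precisely the non-generic ones (highly singular $B$), and for those your curve $\widetilde C$ may well have all components rational (e.g.\ $\{B_3=0\}$ a union of lines, or tangent to $\{Q=0\}$), so the genus obstruction evaporates; the proposed fallback of ``isolating enough spurious base locus in each degenerate configuration'' is exactly the hard, uncompleted step. The paper avoids this dichotomy entirely by never trying to prove $\deg(\gamma)\geq 4$ unconditionally. Instead it embeds $Y$ birationally into the flag variety $\FF\subset\PP^2\times\PP^2$ as a divisor $Z$ of bitype $(d,2)$ with $2d=\deg(\gamma)$, and observes via adjunction that $d=1$ forces $\omega_Z$ to have bitype $(-1,0)$, coinciding with $\omega_Y$; a short analysis of the two projections then shows $Z$ is smooth and $Z\to Y$ is a minimal resolution of nodes, so that \emph{all} singular points of $X$ lie over singular points of $Y$, which sit on $\{Q=0\}$ and are cut out there by a cubic $B'(w,x)$ --- at most $2\cdot 2\cdot 3=12$ points since $\hol_Y(1)^2=2$. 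In other words, the paper proves the disjunction directly (``$d=1$ implies $\nu\leq 12$'') with no genericity assumption, which is what your argument is missing. To repair your proof you would either need to carry out the degenerate case analysis in full, or switch to an argument of the paper's type that works uniformly.
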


\begin{proof}
We use again the normal form where $ Q(x) = x_1 x_2 + \la x_3^2, \ \la \in \{0,1\}$.

 The Gauss map factors through the inseparable double cover (setting $w:= z^2$) of the Del Pezzo surface $Y$ of degree $2$
in $\PP(1,1,1,2)$, such that $  \omega_Y = \hol_Y(-1)$.

The  projection to the $\PP^2$  with coordinates $x$  and the Gauss map to the plane with coordinates $y$
 induce a birational embedding of $Y$   in $\PP^2 \times \PP^2$, since $y = \ga (x,w) = ( w x_2 + B_1, w x_1 + B_2, B_3)$,
hence
$$ y_1/y_3 = (w x_2 + B_1)/B_3 \Rightarrow 
w  = (B_3/x_2 ) ( y_1/y_3 + B_1/B_3).$$

The image lands, as it is immediate to verify, in the flag manifold $\FF$, a smooth divisor of bitype (1,1)

$$ \FF = \left\{ (x,y) \,\middle\vert \, \sum_i x_i y_i =0\right\},$$

and inside $\FF$ the image $Z$  of $Y$ is a divisor of bitype $(d,2)$ where $2d$ is the degree of the Gauss map.

We want to show that $ d >1$.

By adjunction the dualizing sheaf $\omega_Z$ of $Z$ is a divisor of bitype $(d-2, 0)$.
whereas  the canonical system of $Y$ corresponds to a divisor of bitype (-1, 0).
The crucial observation is that, if $d=1$, then these two divisors coincide.

$Y$  has a rational map to $Z$  and composing with  the first projection we get  a morphism, while
composing with the second projection we get the blow up of some  points.

Let  $Y'$ be the blow up of $Y$, such that $\pi : Y' \ra  Z$ is a birational morphism.
Also the second projection $ p : Z \ra \PP^2$ is a birational morphism, moreover the fibres of $p$ are contained in the fibres of 
$ p : \FF \ra \PP^2$, which are isomorphic to $\PP^1$. We blow up the points of $\PP^2$ where the fibre of
 $ p : Z \ra \PP^2$ has dimension 1, obtaining $Z'$.  Then we get a factorization $ Z \ra Z' \ra \PP^2$.

Since  $Z'$ is smooth, and $ Z \ra Z'$ is finite and birational, follows that $ Z \cong Z'$ and $Z$ is smooth.

Now $Z$ and $Y$ are birational  normal Del Pezzo surfaces, and for both the anticanonical divisor is the pull back
of $\hol_{\PP^2}(1)$ under the first projection (to the $\PP^2$ with coordinates $(x)$).

 The  first projection $\phi : Z \ra \PP^2$ has degree two and  is either finite, or its fibres are isomorphic to $\PP^1$.
By normality we have a birational morphism $ \psi : Z \ra Y$. In the first case $\psi$ is an isomorphism,
in the second case it is a minimal resolution of singularites. And since the fibres are smooth rational curves with normal bundle of degree $-2$, then the corresponding singularities of $Y$ are nodes.

This shows that $d=1$ is only possible if  there are  no singular points of $X$ which do not map to singular points of $Y$, and the
latter are nodes.

Since the  singularities of $Y$ correspond to the singularities of $X$ for which $ Q(x)=0$, we see that all the singular points of $X$
satisfy $ Q(x)=0$. Since the singular points of $Y$ are defined by $ Q(x)=0$ and by 3 equations of degree
$3$, it follows that there is a linear combination $B'(w,x) $ of these 3 equations such that 
the singular points of $Y$ are contained in the finite set defined by $ Q(x)= B'(w,x)=0$.

Since $\hol_Y(1)$ has self-intersection equal to  2, $Y$ has at most 12 singular points.

\end{proof}

\begin{prop}\label{13}
\label{cor:9}
If $2 \nu  > 28 - \deg (\ga) \deg (X^{\vee}) $, 
then all singularities of $X$ are rational double points,
and the minimal resolution $S$ is a K3 surface.
In particular, this holds  for  $\nu\geq 13$.
\end{prop}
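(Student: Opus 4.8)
The plan is to argue by contraposition from the Degree Formula of Section~\ref{s:Gauss}, feeding in the lower bound of Proposition~\ref{9}. Write $d:=\deg(\ga)\deg(X^\vee)$, so that the Degree Formula reads
\[
\sum_{P\in\mathrm{Sing}(X)}(F,F_1,F_2)_P \;=\; 36-d .
\]
Assume that some $P_0\in\mathrm{Sing}(X)$ is \emph{not} a rational double point. By Proposition~\ref{9} we have $(F,F_1,F_2)_{P_0}\ge 10$, while each of the remaining $\nu-1$ singular points contributes at least $2$ to the left-hand side; hence $36-d\ge 10+2(\nu-1)=2\nu+8$, i.e.\ $2\nu\le 28-d$. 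This contradicts the hypothesis $2\nu>28-d$, so all singularities of $X$ are rational double points. (The only thing to check here is that the local coordinates used to set up the Degree Formula are generic enough for the bound of Proposition~\ref{9}, which holds for any choice of local coordinates at $P_0$, to apply; this is built into that statement.)

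Next I would deduce that the minimal resolution $S$ is a K3 surface. Since $X$ is normal, $\mathrm{Sing}(X)$ is finite, and as a quartic hypersurface $X$ is Gorenstein with $\omega_X\cong\hol_X$ by adjunction. A rational double point is a rational singularity whose minimal resolution is crepant, so the minimal resolution $\pi\colon S\to X$ satisfies $\omega_S\cong\pi^{*}\omega_X\cong\hol_S$ and $R^1\pi_{*}\hol_S=0$; combining the latter with $H^1(X,\hol_X)=0$ (read off from $0\to\hol_{\PP^3}(-4)\to\hol_{\PP^3}\to\hol_X\to 0$) via the Leray spectral sequence gives $H^1(S,\hol_S)=0$. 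Thus $S$ is a smooth projective surface with trivial canonical bundle and $h^1(\hol_S)=0$, i.e.\ a K3 surface.

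For the final assertion it suffices to verify $2\nu>28-d$ when $\nu\ge 13$, i.e.\ $d\ge 3$ in that range. By Proposition~\ref{gaussestimate}(IV), for $\nu\ge 13$ the dual variety $X^\vee$ is an irreducible surface and $d\ge 2$; so the claim is immediate for $\nu\ge 14$, and for $\nu=13$ it remains to exclude $d=2$. If $d=2$ then either $\deg(\ga)=1$ and $\deg(X^\vee)=2$ — impossible, since an irreducible quadric is normal whereas (IV) forces $X^\vee$ non-normal when $\deg(\ga)=1$ — or $\deg(\ga)=2$ and $X^\vee$ is a plane. In the latter case $X=\{az^4+z^2Q(x)+B(x)=0\}$ in suitable coordinates, and I would run through the sub-cases: $Q=0$ is Corollary~\ref{A3} (at most $7$ singular points), $Q$ a square of a linear form is Proposition~\ref{special} (with $\deg(\ga)=8$), $a=0$ is the inseparable-projection case treated by Proposition~\ref{insep} together with \cite{cat21}, and in the remaining case Lemma~\ref{dp} yields $\deg(\ga)\ge 4$ because $\nu\ge 13>12$. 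In every sub-case $\deg(\ga)\ge 4$ (or $\nu\le 12$), contradicting $\deg(\ga)=2$; hence $d\ge 3$, and the hypothesis holds.

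The essential content is already packaged in Proposition~\ref{9}; what remains is bookkeeping with the Degree Formula together with standard facts about rational double points. The step I expect to require the most care is the exclusion of $d=2$ in the case $\nu=13$, which unavoidably sends one back into the plane-case analysis of Section~\ref{ss:plane} (Corollary~\ref{A3}, Propositions~\ref{special} and~\ref{insep}, and Lemma~\ref{dp}).
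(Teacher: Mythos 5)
Your first two paragraphs are fine and match the paper: the reduction of the RDP statement to Proposition~\ref{9} plus the degree formula is exactly the intended argument, and the crepant-resolution/Leray argument for the K3 conclusion is standard (the paper leaves it implicit).

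The gap is in your final step, in the sub-case $a=0$ with $Q$ of rank $2$, i.e.
$X=\{z^2x_1x_2+B(x)=0\}$. Your case list lumps all of $a=0$ into ``Proposition~\ref{insep} together with \cite{cat21}'', but Proposition~\ref{insep} only treats $Q$ a \emph{smooth} conic, and Lemma~\ref{dp} is stated for $a\neq 0$ (equation $z^4+z^2Q+B$), so neither gives you $\deg(\ga)\ge 4$ here. What Part~I (Step~II of Proposition~3) actually provides for $z^2x_1x_2+B$ is that $\nu\le 13$, with $\nu=13$ attained only as $12$ nodes plus one biplanar double point at $x=0$ --- so $\nu=13$ is not excluded, and your claimed dichotomy ``$\deg(\ga)\ge 4$ or $\nu\le 12$'' is not established in this sub-case. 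Consequently you have not verified the numerical hypothesis $2\nu>28-\deg(\ga)\deg(X^\vee)$ there, and your framing ``it suffices to verify $2\nu>28-d$ when $\nu\ge 13$'' leaves this case open. The paper closes it differently: it does not prove $d\ge 3$ for this equation, but instead observes that the explicit list of singularities ($12$ nodes and one biplanar point) consists of rational double points, so the K3 conclusion holds \emph{directly}, bypassing the numerical criterion. You should either import that direct argument from Part~I or prove a lower bound on $\deg(\ga)$ for $z^2x_1x_2+B$; as written, the step does not go through.
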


\begin{proof}
The first statement follows directly from combining Propositions \ref{gaussestimate} and \ref{9}.


Let's deal with the second assertion.

If $X^\vee$ is not a plane, then, by Proposition \ref{gaussestimate} (IV),  $\deg (\ga) \deg (X^{\vee}) \geq 3$ and we are done.

Hence we may assume that  $X^\vee$ is a plane.

By  Lemma \ref{dp},  Propositions \ref{special} and \ref{insep},  either the number of singular 
points is at most 12, or  $ \deg(\ga) \deg (X^{\vee}) \geq 4$, or we are in the cases
where 
$$ X = \{ (x,z) | z^2 x_1^2 + B(x)=0\},$$ or 
 $$ X = \{ (x,z) | z^2 x_1x_2 + B(x)=0\}.$$
 
The former  case was dealt in Step I of Proposition 3 of Part I, showing that $X$ has at most 8 singular points,
and in this case Example 10 ibidem shows that $ \deg (\ga) \geq 4$.

In the latter case   Step II of Proposition 3 of Part I shows that $X$ has at most 13 singular points; and that it has exactly 
13 points only if it has 12 nodes (corresponding to the points of the plane 
where $ B_3= B_1 x_1 + B=0$), and a biplanar singular point (at $x=0$): 
hence also in this case the minimal resolution is a K3 surface.

\end{proof}

The following result improves upon part (V) of Proposition \ref{gaussestimate}.
\begin{cor}\label{14}
If  $\nu\geq 14$ all the singularities are either nodes or biplanar double points.
\end{cor}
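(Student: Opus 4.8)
The plan is to reduce everything to the Gauss estimate \eqref{eq:deg} of Proposition \ref{gaussestimate}, once the quantity $\deg(\ga)\deg(X^{\vee})$ has been controlled. First I would record the starting point: since $\nu\ge 14>12$, Proposition \ref{cor:9} applies, so \emph{every} singularity of $X$ is a rational double point (and $S$ is a K3 surface). In the coarse trichotomy of Section \ref{s:Gauss}, a rational double point is either of type $A_n$ — a node when $n=1$, biplanar when $n\ge 2$ — or else it is uniplanar (the $D_n$ and $E_n$ types, whose quadratic part is a perfect square). Thus the corollary is equivalent to the single statement $u=0$, i.e.\ that $X$ has no uniplanar singularity.

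If the dual surface $X^{\vee}$ is not a plane, then Proposition \ref{gaussestimate}(V) already gives $u=0$, and we are done in that case.

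It remains to treat the case where $X^{\vee}$ is a plane, so that $\deg(X^{\vee})=1$ and $X$ has an equation $az^4+z^2Q(x)+B(x)=0$. Here I would run through the finitely many sub-cases according to $a$ and to the rank of the quadratic form $Q$, exactly as in the proof of Proposition \ref{cor:9}. The degenerate sub-cases are all excluded by the hypothesis $\nu\ge 14$: if $Q=0$ then $\nu\le 7$ by Corollary \ref{A3}; if $a=0$ and $Q$ is the square of a linear form then $\nu\le 8$; and if $a=0$ and $Q$ has rank $2$ then $\nu\le 13$ (these last two being Steps I and II of Proposition 3 of \cite{cat21}). In each of the genuinely occurring sub-cases one has $\deg(\ga)\ge 4$: either from Lemma \ref{dp} (when $a\ne 0$ and $Q$ is not a square, the alternative ``$\nu\le 12$'' being excluded), or because $\deg(\ga)=8$ in the cases of Propositions \ref{special} and \ref{insep}. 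Hence $\deg(\ga)\deg(X^{\vee})\ge 4$ throughout.

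Finally I would feed this into \eqref{eq:deg}, which yields $2\nu+b+6u\le 36-\deg(\ga)\deg(X^{\vee})\le 32$. If $X$ had a uniplanar singularity, i.e.\ $u\ge 1$, this would force $2\nu\le 32-6=26$, so $\nu\le 13$, contradicting $\nu\ge 14$; hence $u=0$. Combined with Proposition \ref{cor:9}, every singularity of $X$ is a node or a biplanar double point. The real work is all in the preparatory results; the only point requiring care is to check that the enumeration of the ``$X^{\vee}$ a plane'' sub-cases is exhaustive and that the bound $\deg(\ga)\deg(X^{\vee})\ge 4$ does hold in every surviving sub-case — in particular one must not overlook the sub-case where $Q$ is the square of a linear form with $a\ne 0$ (Proposition \ref{special}), where the bound comes from the explicit value $\deg(\ga)=8$ rather than from Lemma \ref{dp}.
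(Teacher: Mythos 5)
Your argument is correct and follows the paper's own route: both rest on the Gauss estimate \eqref{eq:deg} combined with the lower bound $\deg(\ga)\deg(X^{\vee})\geq 3$ (resp.\ $\geq 4$ when $X^{\vee}$ is a plane) already extracted in the proof of Proposition \ref{cor:9}, which forces $u=0$ once $\nu\geq 14$. The only difference is presentational — you re-run the case analysis behind that lower bound and invoke Proposition \ref{gaussestimate}(V) for the non-plane case, whereas the paper simply cites the previous proposition.
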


\begin{proof}
Recall the basic inequality 
$$ 36 - \deg(\ga) \deg(X^{\vee} ) \geq    2 \nu + b + 6 u .$$

We are claiming $u=0$ if $\nu \geq 14$, hence it suffices to recall that we saw in the previous proposition
that $ \deg(\ga) \deg(X^{\vee}) \geq 3$.

\end{proof}

\section{Proof of the main theorem \ref{theo}  -- general bound }
\label{s:proof}

Throughout this section until \ref{ss:aux},
we assume that $X$ is a normal quartic surface with $\nu \geq 15$ singular points 
in order to establish a contradiction and prove the general bound of Theorem \ref{theo}.
We use the following result 
which will follow  from Propositions \ref{prop:>14} 
 and \ref{prop:cusps}
(to be proven in Section \ref{s:quasi} using the theory of elliptic and quasi-elliptic fibrations on K3 surfaces).

\bigskip

\begin{main-claim}
\label{main-claim}
If $X$ has $\nu \geq 15$ singular points, then, for each pair $P_i, P_j$ of singular points of $X$,
 the 
 line $L_{ij}^{\vee}$  dual to $L_{ij} : = \overline{P_i  P_j}$ is
contained in the dual surface $X^{\vee}$.
\end{main-claim}

\bigskip

\subsection{The main claim implies the general bound of Theorem \ref{theo}}
\label{ss:pf-thm}

\smallskip

It will suffice to show that:

\begin{claim}  
\label{claim:skew}
In the above setting,
$X^{\vee}$ contains two skew lines and  $7$ distinct coplanar lines.
\end{claim}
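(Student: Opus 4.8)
The plan is to set up the dictionary first: for singular points $P_i\neq P_j$ of $X$ the dual line $L_{ij}^\vee$ is the pencil of planes containing $L_{ij}=\overline{P_iP_j}$, so $L_{ij}^\vee$ lies in the plane $P_i^\vee\subset(\PP^3)^\vee$ dual to $P_i$, and $L_{ij}^\vee=L_{ik}^\vee$ exactly when $P_i,P_j,P_k$ are collinear; moreover, for two distinct lines of $\PP^3$, the dual lines meet if and only if the two lines are coplanar. The proof then rests on one elementary fact: a normal quartic surface carries at most $3$ singular points on any line, and (once $\nu\geq 14$) at most $6$ on any irreducible conic contained in it. For the line statement: if $\ell$ contained $4$ singular points, then, each of them contributing $\geq 2$ to $\ell\cdot X$, one must have $\ell\subset X$; for every plane $\Pi\supset\ell$ the residual cubic of $\Pi\cap X=\ell+G$ would then pass through these $4$ collinear points, whence $\ell\subset G$ and $\Pi\cap X\supset 2\ell$; writing $\ell=\{x_2=x_3=0\}$ this forces $F\in(x_2,x_3)^2$, i.e.\ $\ell\subset\Sing(X)$, against normality. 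For the conic statement: $\Pi:=\langle C\rangle$ satisfies $\Pi\cap X=2C$ (the residual conic cannot carry $\geq 5$ of the double points), so on the K3 resolution $S$ (Proposition \ref{cor:9}) one gets $H\equiv 2\widetilde C+\sum_i E_i$ with all exceptional coefficients pinned down by $H\cdot E_i=0$ and $\widetilde C\cdot E_i=1$, and then $4=H^2=(2\widetilde C+\sum E_i)^2$ yields exactly $6$ nodes; by Corollary \ref{14} nothing worse than biplanar points occurs, and the analogous computation there only lowers the count.

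Given this, the seven coplanar lines are immediate. Fix a singular point $P_i$: the $\nu-1\geq 14$ lines $L_{ij}^\vee$ all lie in the plane $P_i^\vee\subset X^\vee$, and if fewer than $7$ of them were distinct, the $\nu-1$ points $P_j$ would lie on at most $6$ lines through $P_i$, forcing one such line to carry $\geq 3$ of them, hence $\geq 4$ collinear singular points of $X$ — a contradiction. Any $7$ of the distinct lines $L_{ij}^\vee$ then serve.

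For the two skew lines, assume no two of the lines $L_{ij}$ are skew. Being pairwise incident, they must (classically) either all pass through a single point or all lie in a single plane. The first is impossible: since $\nu\geq 15$ and, by the fact above, the singular points are not all collinear, there are three non-collinear ones $P_1,P_2,P_3$, and any point common to $L_{12}$ and $L_{13}$ would be $P_1$, whence $P_1\in L_{23}$ and the three points would be collinear. In the second case all $\nu\geq 15$ singular points lie in a plane $\Pi$, so $D:=X\cap\Pi$ is a plane quartic with at least $15$ points of multiplicity $\geq 2$; a reduced plane quartic has at most $6$ singular points, so $D$ is non-reduced, and running through the possible non-reduced quartic divisors one finds that the multiplicity-$\geq 2$ locus of $D$ is always contained in a line or an irreducible conic lying on $X$, together with at most one extra point — so $X$ would have $\geq 13$ singular points on a line or $\geq 14$ on an irreducible conic, contradicting the fact above. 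Therefore two of the $L_{ij}$ are skew, and then the corresponding dual lines $L_{ij}^\vee\subset X^\vee$ are skew as well.

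The step I expect to be the real work is the ``coplanar'' branch of the skew-lines argument: pinning down the number of singular points that can lie on an irreducible conic contained in $X$ genuinely requires the intersection computation on the K3 resolution (and a check that biplanar points do not spoil it), and the reduction from ``plane quartic with $15$ double points'' to ``a line or a conic on $X$'' must be made exhaustive over all non-reduced quartic divisors. The seven-coplanar-lines part, once the ``$\leq 3$ singular points per line'' bound is in hand, is just pigeonhole.
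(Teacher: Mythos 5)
Your dictionary, your ``at most $3$ collinear singular points'' lemma, and your pigeonhole argument for the seven coplanar lines coincide with the paper's proof of that half of the claim. For the two skew lines, however, you take a genuinely different route, and it is there that the real gap sits. The paper does not go through the dichotomy ``pairwise incident lines are concurrent or coplanar'' at all: it proves directly that \emph{every} plane contains at most $6$ singular points of $X$ (writing $F=B(x)+zG(x) \bmod (z^2)$, the singular points in $\{z=0\}$ satisfy $\nabla B=B=G=0$; a reduced quartic has at most $6$ singular points, and in the non-reduced cases B\'ezout applied to $\{q=G=0\}$, resp.\ $\{\ell=G=0\}$, together with normality, gives $6$, resp.\ $4$). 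Since $\nu\geq 7$, the singular points are then not coplanar, so four of them span $\PP^3$ and the two lines joining disjoint pairs are skew. This single uniform bound replaces both your concurrency argument and your entire coplanar branch, and it handles all singularity types at once.

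The gap in your version is concentrated in the coplanar branch. First, the assertion that the multiplicity-$\geq 2$ locus of a non-reduced quartic divisor is ``a line or an irreducible conic plus at most one extra point'' is false as stated (for $D=2\ell+2\ell'$ it is the union of two lines), and the promised exhaustion over non-reduced quartics ($2C$, $2\ell+c$, $2\ell+2\ell'$, $3\ell+\ell'$, $4\ell$) is never carried out. Second, your bound of $6$ singular points on an irreducible conic $C\subset X$ is proved only under the assumptions that $\Pi\cap X=2C$ and that all singularities on $C$ are nodes met transversally by $\widetilde C$; the remark that for biplanar points ``the analogous computation only lowers the count'' is not obvious, since the coefficients of the exceptional chain in $\pi^*H$ and the intersection numbers $\widetilde C\cdot E_i^j$ both change, and you would have to redo the lattice computation for each $A_n$. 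All of this is avoidable: in the case $\Pi\cap X=2C$ the singular points of $X$ in $\Pi$ are exactly $\{q=G=0\}$ (with $q$ the equation of $C$ and $q\nmid G$ by normality), so B\'ezout gives at most $6$ with no reference to the resolution and no case distinction on the singularity type. I would recommend replacing the whole coplanar branch by the paper's per-plane bound; as written, your argument is a plausible outline rather than a complete proof of the skew-lines statement.
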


Indeed the claim implies that  $X^{\vee}$ is a surface of degree $\geq 7$, and by the Gauss map estimate
\ref{eq:deg} of Proposition
\ref{gaussestimate} we have
$$ 36 - 7 \deg(\ga)  \geq    2 \nu + b + 6 u,$$
hence $\nu \leq 14$ as announced.
 \qed
 
 \subsection{Proof of Claim \ref{claim:skew}}
 \label{ss:pfofclaim}
 
  We observe first that if a line  $L_{ij}$ passes through a third singular point $P$ of $X$,
 then it is contained in $X$, and the planes  $H \supset L_{ij}$ cut $X$ in the line  $L_{ij}$ plus a cubic $C$
 meeting  $L_{ij}$ in the three points $P, P_i, P_j$.
 
 Hence there cannot be $4$ collinear singular points: because then $C$ would contain  $L_{ij}$ and  $L_{ij} \subset$ Sing$(X)$,
 contradicting the normality of $X$.
 
 
 We show now that each plane contains at most 
$6$ singular points of $X$.

In fact, if the plane is the plane $z=0$, and the equation of $X$ is $$B(x) + z G(x) \mod ( z^2 ),$$
the singular points on the plane are the solutions of
$$ z = \nabla B(x)= B(x) = G(x)=0.$$
A reduced plane quartic has at most $6$ singular points. If the quartic is non-reduced, and 
 $B(x) = q(x)^2$, then the singular points 
are the solutions of $ z =q(x)  = G(x)=0$ and they are at most $6$ by the theorem of B\'ezout
and since $X$ is normal.

The case where $\{ x \mid  B(x)=0\}$ consists of  a double line and a reduced conic leads to at most one
singular point outside the line, hence at most $4$ singular points in the plane.

 Whence, if $\nu \geq 7$, there are $4$ linearly independent singular points of $X$, and we have found 
  two skew  lines  $L_{ij},  L_{hk}$: likewise the dual lines are skew.
  
  Assume now that $\nu \geq 15$ and consider all the lines of the form  $L_{1j}$: these are at least $7$,
   since at most 3 singular points are collinear,
  and the dual lines are contained in the plane dual to the point $P_1$.
  
  \qed

  \subsection{Propositions \ref{prop:>14} and \ref{prop:cusps} imply the Main Claim \ref{main-claim}}
  \label{ss:main-claim}

  Since we assume  $\nu\geq 15$, we can apply Proposition \ref{prop:>14} 
   to show that each pair $(P_i, P_j)$ induces a quasi-elliptic fibration.
 By the degree estimate in Proposition \ref{gaussestimate}, all singularities are nodes or biplanar double points,
 so Proposition \ref{prop:cusps} proves
 that 
    the pencil of planes containing $L_{ij}$ yields a line $L_{ij}^{\vee}$ contained in  $X^{\vee}$.

    \qed

\subsection{Auxiliary results}
\label{ss:aux}

We establish here, with  similar arguments, two easy results for later use.
To this end, we distinguish whether two given singular points $P_1, P_2$
 are collinear with a third singularity or not (in the latter case we call $P_1, P_2$ companions).
Recall that in the first case, the line $L=\overline{P_1P_2}$ is contained in $X$,
and each plane containing $L$ contains at most 6 singularities.

\begin{lemma}
\label{lem:companions}
If $\nu\geq 9$, then there is a singular point with two companions.
\end{lemma}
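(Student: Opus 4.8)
The plan is to argue by contradiction: suppose $\nu \ge 9$ but no singular point has two companions. Recall that two singular points are \emph{companions} precisely when the line joining them contains no third singularity. Under the contradiction hypothesis, every singular point $P_i$ has \emph{at most one} companion among the remaining singular points. First I would organize the set $\mathrm{Sing}(X) = \{P_1,\dots,P_\nu\}$ according to collinearity: define a graph $\Gamma$ on the vertex set $\mathrm{Sing}(X)$, joining $P_i$ to $P_j$ by an edge whenever $P_i,P_j$ are companions (equivalently, $\overline{P_iP_j}$ meets $\mathrm{Sing}(X)$ in exactly these two points). The contradiction hypothesis says $\Gamma$ has maximal degree $\le 1$, i.e.\ $\Gamma$ is a disjoint union of isolated vertices and edges (a matching). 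Hence the complement relation — "$P_i,P_j$ are collinear with a third singularity" — holds for all but at most $\lfloor \nu/2\rfloor$ of the $\binom{\nu}{2}$ pairs.

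The key geometric input is the already-established fact that each plane contains at most $6$ singular points of $X$ (stated in this very subsection, and proved in \ref{ss:pfofclaim}), together with the observation recalled there that when $P_i,P_j$ are not companions, the line $L_{ij}=\overline{P_iP_j}$ lies in $X$, and each plane through $L_{ij}$ carries at most $6$ singularities. The next step is to count: fix $P_1$. By hypothesis $P_1$ has at most one companion, so at least $\nu-2\ge 7$ of the other singular points $P_j$ are such that the line $\overline{P_1P_j}$ contains a third singularity, hence lies in $X$. I would then group these lines through $P_1$: two such lines $\overline{P_1P_j}$, $\overline{P_1P_k}$ are either equal (if $P_1,P_j,P_k$ are collinear) or span a plane through $P_1$. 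Since at most $3$ singular points are collinear (no $4$ collinear, as noted in \ref{ss:pfofclaim}, by normality), each line through $P_1$ carries at most $2$ further singularities, so the $\ge 7$ points $P_j$ lie on at least $\lceil 7/2\rceil = 4$ distinct lines through $P_1$, none of which is the companion-line.

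Now pick two of these lines, say $\overline{P_1P_2}$ and $\overline{P_1P_3}$ (with $P_2,P_3$ not companions of $P_1$), spanning a plane $H$. Then $H$ contains the third singular points witnessing the non-companion relations for the pairs $(P_1,P_2)$ and $(P_1,P_3)$, as well as $P_1,P_2,P_3$; I would show that in fact $H$ must contain too many singularities, or else push the argument further: among the $\ge 4$ lines through $P_1$, consider how the remaining $\ge 7$ singular points together with their collinearity-witnesses distribute over the planes they span, and derive that some plane through $P_1$ contains $\ge 7$ singular points, contradicting the bound of $6$. The main obstacle I anticipate is the bookkeeping in this last step: one must ensure the "witness" singularities (the third points on the lines $\overline{P_1P_j}$) are genuinely new and land in a common plane, which requires carefully choosing which pair of lines through $P_1$ to use — ideally two lines whose union, together with the at most $6$ singularities the containing plane may hold, already overflows. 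A clean way around this is to observe that the $\nu-2 \ge 7$ non-companion partners $P_j$ of $P_1$, each contributing itself plus a collinear witness, would force a plane through two of the lines to contain $\{P_1\}$ plus at least $6$ other singular points (two points on each of $\ge 3$ concurrent lines in that plane once $\nu$ is large enough), violating the six-point bound; making the precise counting tight for $\nu = 9$ is the delicate point, and I would handle the small case by the explicit pigeonhole that $7$ points on $\ge 4$ lines through $P_1$ must have two of those lines coplanar with a configuration exceeding six singular points in their plane.
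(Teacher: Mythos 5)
Your setup coincides with the paper's: assume every singularity has at most one companion, fix $P_1$, note that at least $\nu-2\ge 7$ of the other singular points are non-companions of $P_1$, that each such point lies with exactly one further singularity on a line through $P_1$ contained in $X$ (no four collinear), and use the bound of at most $6$ singular points per plane. The gap is in your endgame. You want to force some plane to contain $\ge 7$ singular points, but the counting does not deliver this: a plane spanned by two of the lines through $P_1$ contains only $1+2+2=5$ of the known singular points, and there is no pigeonhole forcing three of the (typically four) lines through $P_1$ to be coplanar. Even the refinement one might try --- in the plane $H\supset L\cup L_1$ containing $P_1,\dots,P_5$, at least two of the cross pairs in $\{P_2,P_3\}\times\{P_4,P_5\}$ are non-companion pairs, each contributing a witness inside $H$ --- fails to overflow $H$, because both witnesses can coincide at the single point $P_6=\overline{P_2P_4}\cap\overline{P_3P_5}$, yielding exactly $6$ points in $H$ arranged as a complete quadrilateral $X\cap H=L+L_1+L_2+L_3$. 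So the contradiction you aim for never materializes.

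The paper's proof needs one further idea at precisely this point: in the quadrilateral configuration the line $\overline{P_1P_6}$ is a diagonal containing none of $P_2,\dots,P_5$, and since every singular point on that line would lie in $H$ (which already holds its maximum of $6$), $P_6$ is a \emph{companion} of $P_1$. Repeating the construction with a second plane $H'=\langle L,L'\rangle$ through a different pair of lines at $P_1$ produces a second companion $P_9\in H'$, with $P_9\ne P_6$ because $P_6\notin H'$ (as $H\cap H'=L$). Two distinct companions of $P_1$ contradict the hypothesis directly. In short: the contradiction is not ``too many points in a plane'' but ``$P_1$ has two companions,'' and without this switch your argument cannot be completed.
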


\begin{proof}
Assume to the contrary that each singularity has at most one companion.
Take a point $P_1$  and three collinear pairs, say $P_1 , P_2 , P_3 \in L$
$P_1 , P_4 , P_5 \in L_1$, $P_1 , P_7, P_8 \in L'$. 

Let $H$ be the plane  containing $P_1, P_2 , P_3, P_4 , P_5,$
and let $H'$ be the plane  containing  $P_1, P_2 , P_3, P_7 , P_8$. 
These are different, since each plane contains at most $6$
singular points.

By assumption, we may assume without loss of generality  that $P_4$ is not companion of $P_2$, 
hence there is $P_6$ collinear with $P_2, P_4$, 
so that $P_2,P_4,P_6 \in L_2\subset X$. 
At this stage we have obtained $6$ singular points (the maximum) in the plane $H$, and we observe that
$P_3$ is not companion of  $P_4$  or $P_5$. 

Hence we get 4 lines 
  $$X \cap H =  L+L_1+L_2+L_3 ,$$ 
  where $L_3$ must contain the   singular points $P_3, P_6$ and thus also $P_5$. Thereby we reach  the conclusion that $ P_1$ is companion of $P_6$.

We establish now a contradiction as follows.
Playing the same game for  the other  plane $H'$,
we find  another companion of $P_1$, call it  $P_9$.

Since $P_9 \in H' $, while $P_6\not\in H' $ (since $ H \cap  H' = L $) we have found two different companions for $P_1$, 
and we have reached a contradiction.

\end{proof}

\begin{prop}
\label{prop:deg8}
If $X$ has $\nu =14$ singular points and, for each pair $P_i, P_j$ of singular points of $X$,
 the pencil of planes containing the line $L_{ij} = \overline{P_i  P_j}$ yields a line $L_{ij}^{\vee}$
contained in the dual surface $X^{\vee}$, then the degree of the dual surface is at least $8$.
In particular,
the singular points are just $14$ nodes.
\end{prop}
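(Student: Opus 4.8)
The plan is to argue along the very lines of Claim \ref{claim:skew}, but now exploiting that we have an actual quartic with $14$ nodes (no uniplanar points, by Corollary \ref{14}) for which \emph{every} line through two of the nodes dualizes to a line in $X^{\vee}$. First I would observe, exactly as in Section \ref{ss:pfofclaim}, that at most $3$ of the $14$ nodes are collinear (else the joining line lies in $\Sing(X)$, contradicting normality) and that each plane contains at most $6$ of the nodes. The goal is to produce, inside $X^{\vee}$, two skew lines together with $7$ coplanar lines so that $\deg X^{\vee}\ge 8$; then the degree formula \eqref{eq:deg} forces $36-8\deg(\ga)\ge 2\cdot 14+b$, hence $\deg(\ga)=1$ and $b=0$, i.e.\ all $14$ singularities are nodes, which is the final assertion.

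The skew pair is immediate: since $14\ge 7$, four of the nodes $P_i,P_j,P_h,P_k$ are linearly independent, so $L_{ij}$ and $L_{hk}$ are skew lines in $\PP^3$, whence their duals $L_{ij}^{\vee}, L_{hk}^{\vee}\subset X^{\vee}$ are skew lines in $\sP$. For the $7$ coplanar lines I would fix a single node $P_1$ and look at all lines $L_{1j}$, $j=2,\dots,14$: the duals $L_{1j}^{\vee}$ all lie in the plane dual to $P_1$, and since at most $3$ of the remaining $13$ nodes can be collinear with $P_1$ (in fact $3$ points on a line through $P_1$ give at most one line $L_{1j}$), the $13$ nodes distribute into at least $\lceil 12/2\rceil=6$ distinct lines through $P_1$; a slightly more careful count (at most one line through $P_1$ can carry two further nodes) gives at least $7$ distinct such lines, hence at least $7$ distinct coplanar lines $L_{1j}^{\vee}$ in $X^{\vee}$. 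Combined with the skew pair this yields $\deg X^{\vee}\ge 7$; to push to $\deg X^{\vee}\ge 8$ one notes the skew lines $L_{ij}^{\vee}$ can be chosen not to lie in the plane $P_1^{\vee}$ (as $P_1^{\vee}$ meets $X^{\vee}$ in a curve of degree $\ge 7$ while $L_{ij}^{\vee}$ is only required to avoid being one of the $7$ concurrent lines), so a generic hyperplane in $\sP$ meets $X^{\vee}$ in $\ge 7$ points from the coplanar configuration plus at least one more from the skew lines.

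The main obstacle I anticipate is the combinatorial bookkeeping needed to guarantee $7$ genuinely distinct lines $L_{1j}^{\vee}$ and, simultaneously, a skew pair disjoint from that coplanar pencil: one must rule out degenerate distributions of the $14$ nodes (e.g.\ many nodes accumulating on few lines through $P_1$, or all nodes nearly coplanar) using only the two constraints "at most $3$ collinear" and "at most $6$ coplanar''. A clean way to organize this is to count pairs: the $14$ nodes span $\binom{14}{2}=91$ pairs, each line carrying $k\le 3$ nodes accounts for $\binom{k}{2}\le 3$ pairs, so there are at least $\lceil 91/3\rceil=31$ distinct lines $L_{ij}$, and by an averaging argument some node lies on at least $\lceil 2\cdot 31/14\rceil=5$ of them — then one improves this to $7$ by using that the $13$ neighbours of that node fill its dual plane and at most one line can be "fat" (carry two neighbours), forcing $\ge 7$ distinct concurrent lines. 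Once $\deg X^{\vee}\ge 8$ is in hand, the degree-formula computation is routine: $2\cdot 14 + b + 6u \le 36 - 8\deg(\ga)\le 28$ gives $b=u=0$ and $\deg(\ga)=1$, so the $14$ singularities are exactly $14$ nodes.
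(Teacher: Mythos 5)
Your overall strategy (two skew dual lines plus a large set of concurrent dual lines in the plane $P_1^{\vee}$, then the degree estimate \eqref{eq:deg}) is the same as the paper's, and the reduction of the final assertion to $\deg X^{\vee}\geq 8$ is correct. The gap is in getting from $7$ to $8$ coplanar lines, and neither of your two patches works. First, the assertion that ``at most one line through $P_1$ can carry two further nodes'' has no justification and is false a priori: the only constraints available are ``at most $3$ collinear'' and ``at most $6$ coplanar'', and these allow $P_1$ to lie on six lines each carrying two further nodes plus one line carrying the remaining node, i.e.\ exactly $7$ concurrent lines. (The count $\lceil 13/2\rceil=7$ is all you get for free.) Second, the attempt to gain an eighth line from the skew pair fails: $\deg X^{\vee}$ is read off from $X^{\vee}\cap\Pi$ for the plane $\Pi=P_1^{\vee}$, which is a plane curve of degree $\deg X^{\vee}$ containing the $7$ concurrent lines; a further line of $X^{\vee}$ not contained in $\Pi$ meets $\Pi$ in a single point that may well lie on one of those $7$ lines, so it contributes nothing to the degree of $X^{\vee}\cap\Pi$. (And a \emph{general} line of $\sP$ misses every line lying on $X^{\vee}$, so one cannot count the lines of $X^{\vee}$ towards its degree one by one.) A surface with $\deg X^{\vee}=7$, seven concurrent lines in $\Pi$ and further lines off $\Pi$ is not excluded by anything you say.

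The missing ingredient is Lemma \ref{lem:companions}: for $\nu\geq 9$ some singular point $P_1$ has at least \emph{two} companions. Writing $2k+c=13$, where $k$ is the number of lines through $P_1$ carrying two further singular points and $c$ the number of companions of $P_1$, the number of distinct lines $L_{1j}$ is $k+c=13-k$, and $c\geq 2$ forces $k\leq 5$, hence at least $8$ concurrent lines and $\deg X^{\vee}\geq 8$. That lemma is a genuine combinatorial statement (proved in the paper via the bound of $6$ singular points per plane and an analysis of two plane sections through a line of $X$), and your pair-counting and averaging arguments, which only yield $5$ (or at best $7$) concurrent lines, cannot replace it. The final step, $28+b+6u\leq 36-\deg(\ga)\deg(X^{\vee})\leq 28$, is fine once $\deg X^{\vee}\geq 8$ is in hand.
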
 
\begin{proof}
By the Gauss estimate it suffices to show the first assertion, and since $X^{\vee}$ contains at least two skew lines, it suffices to show that
it contains at least $8$ coplanar lines.
But this follows from Lemma \ref{lem:companions}
as there is a singular point $P_1$ on $X$ collinear with at most 5 pairs of singularities,
thus companion to at least 3, yielding a total number of at least 8 coplanar  lines on $X^\vee$.

\end{proof}

We will use Proposition \ref{prop:deg8} later, and we observe that 
 a weaker form suffices, where there is one point $P_1$ with the property of the proposition 
 holding for all lines $\overline{P_1 P_j}$, 
if   $X^\vee$ contains a  line 
skew to (one of) the 8 dual lines from $P_1$.

\section{Genus one fibrations}
\label{s:g=1}

We shall now invoke some results from the  theory of genus one fibrations on K3 surfaces
in order to achieve  the proof of Propositions   \ref{prop:>14} and \ref{prop:cusps}.

These will also be used for the proof of the other parts of Theorem \ref{theo}.

\bigskip

Let $X$ be a projective K3 surface.
{Let $L \in\Pic(X)$ be a divisor class with $L^2 \geq -2$; then, by Riemann-Roch, $\chi(L) \geq 1$
hence $L$ or $-L$ is effective. Hence let us assume that $ L $ is linearly equivalent to an effective divisor $D$.
If $D^2 =0$, then the linear system $|D|$ has dimension $\geq 1$, and we can write $ |D| = |M| + \Psi$,
where $\Psi$ is the fixed part. Clearly then $\Psi = \sum_i E_i$ where each $E_i$ is an irreducible curve with $E_i^2=-2$.

Since 
\begin{eqnarray}
\label{eq:0=}
0 = D^2 = M^2 + D \Psi + M \Psi , \ M^2 \geq 0, \ M \Psi \geq 0,
\end{eqnarray}
we have  $ D \Psi < 0$, or $\Psi=0$. Because, if $ D \Psi \geq  0$ and $\Psi>0$, then
\eqref{eq:0=} implies $M^2 =  D \Psi = M \Psi =0$,
hence $\Psi^2 =0 \Rightarrow \Psi=0$, the intersection form being negative definite by Zariski's lemma 
on the divisor $\Psi$: because  $\Psi$  is contained in the fibres of the fibration associated to $|M|$, 
there are no  multiple fibres, and $\Psi$  does not contain any full fibre (else, it would not be the fixed part).

The conclusion is that either $|D|$ has no fixed part or  there is $E_1$ such that $ D E_1 < 0$, hence reflection in the $(-2)$-curve $E_1$
produces a new divisor class $$D'' : =  D + (D E_1) E_1$$
 such that $(D'')^2=0$. The system $|D''|$ has dimension $\geq 1$, and  
 since the degree of $D''$ is smaller than the degree of $D$, the process terminates producing a base point free
 system  $|D'|$, with $(D')^2=0$, hence $|D'|$ is a pencil  of genus $1$ curves. We may also assume that $D'$ is primitive, so that $D'$
 is indeed a fibre of a fibration $ f : X \ra \PP^1$.

\medskip

If the general fibre is smooth, we call the fibration elliptic and we may further   distinguish 
whether the fibration admits a section or not.}
In characteristics $2$ and $3$, however, the general fibre may also be a cuspidal cubic curve
whence the fibration is called quasi-elliptic.

 Examples are given by sparse Weierstrass forms;
more precisely, in terms of the general equation \eqref{eq:WF} which shall be recalled later,
 those forms which do not contain  terms linear in $y$ (in characteristic $2$)
or all of whose terms have degree 0 or 3 in $x$ (in characteristic $3$).

In particular, quasi-elliptic surfaces over $\PP^1$ are unirational and thus supersingular ($\rho=b_2$)
 by \cite{shiodass}
which makes them quite special 
(see \cite{rudakov-shafarevich}, for instance).

\begin{rem}
\label{rem:-2}
 Any $(-2)$ curve $C$ on $X$ which is perpendicular to $D'$ features as a fibre component of $|D'|$
(but the analogous statement for $(-2)$-curves orthogonal to $D$
is surprisingly subtle in case there is some base locus involved.
 We will come back to this problem in part III.
\end{rem}

\begin{rem}
\label{rem:pencil}
In general, given an effective divisor $D$ with $D^2=0$, $|D|$ need not be a pencil, the easiest example being
the one where $D$ consists of a genus $2$ curve and a disjoint $(-2)$-curve, that is,
$$ D = M + E, M^2 = 2, E^2=-2, M E=0,$$
here  $M-E$ gives the desired pencil.

A sufficient condition for $\dim |D|=1$ is that the divisor $D$ is numerically connected, that is, 
any decomposition $ D =  A + B$, where $A,B$ are effective, satisfies $ AB \geq 1$.

Because, by the exact sequence
$$ 0 \ra \hol_X \ra \hol_X (D) \ra \hol_D(D)\ra 0$$
we have $H^1 (\hol_X (D))=0$ unless $h^1 (\hol_D (D))\geq 2$.
Since $h^1 (\hol_D (D))= h^0 (\hol_D),$
and  $h^0 (\hol_D)=1$ if $D$ is numerically connected, \cite {franchetta}, \cite{ram}, our claim follows.

In this case, $M^2=0$, and $D$ could, for instance, consist of a fibre plus a $(-2)$-curve which is a section,
$$ D = M + E, \;\; M^2 = 0, \;\; E^2=-2, \;\; M E=1.$$
\end{rem}

\subsection{ Disjoint smooth rational fibre components}
For later use, 
let us record some rather special features of elliptic fibrations in characteristic 2.

\begin{prop}
\label{lem:12}
 In characteristic 2, on an elliptic K3 surface the singular fibres contain at most 12 disjoint $(-2)$-curves.
\end{prop}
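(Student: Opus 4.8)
The plan is to bound the number of such curves by the topological Euler number $e(X)=24$ of the K3 surface, fibre by fibre. First one reduces the statement: if $\{C_1,\dots,C_k\}$ is a set of pairwise disjoint $(-2)$-curves contained in fibres of $f:X\to\PP^1$, then each $C_i$ is a smooth rational component of some singular fibre (these are exactly the $(-2)$-curves perpendicular to the fibre class $D'$, cf.\ Remark \ref{rem:-2}), and two components lying in distinct fibres never meet. So, grouping the $C_i$ by the fibre containing them, it suffices to prove that every singular fibre $F_v$ satisfies
\[
2\,a(F_v)\ \le\ e(F_v),
\]
where $a(F_v)$ is the maximal number of pairwise disjoint smooth rational components of $F_v$ and $e(F_v)$ is the contribution of $F_v$ to $e(X)$, i.e.\ $e(F_v)=e^{\mathrm{tame}}(F_v)+\delta_v$ with $\delta_v\ge 0$ the wild (Swan) part. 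Summing over all places $v$ of bad reduction then gives $2k\le 2\sum_v a(F_v)\le \sum_v e(F_v)=24$, i.e.\ $k\le 12$.

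Next one checks the fibrewise inequality along Kodaira's list. For multiplicative reduction $I_n$ ($n\ge 2$) the components form a cyclic configuration $\widetilde A_{n-1}$, a maximal set of pairwise disjoint components has $\lfloor n/2\rfloor$ elements, and $e(I_n)=n$, so $2a(F_v)=2\lfloor n/2\rfloor\le n$; the fibres $I_1$ and $II$ are irreducible with $a(F_v)=0$. For the additive types the components form an affine diagram $\widetilde A_1,\widetilde A_2$ (types $III,IV$), $\widetilde D_{m}$ ($m\ge 4$; types $I_{m-4}^*$) or $\widetilde E_{6,7,8}$ (types $IV^*,III^*,II^*$), and a direct combinatorial computation of the maximal independent sets gives $a=1$ for $III,IV$; $a(I_0^*)=4$, $a(I_n^*)=4+\lceil (n-1)/2\rceil$; $a(IV^*)=4$, $a(III^*)=a(II^*)=5$. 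Comparing with the tame Euler numbers ($3,4$ for $III,IV$; $6$ for $I_0^*$; $n+6$ for $I_n^*$; $8,9,10$ for $IV^*,III^*,II^*$), the inequality $2a(F_v)\le e^{\mathrm{tame}}(F_v)$ already holds for $III,IV,IV^*,II^*$, while for $I_0^*$, for $III^*$ and for $I_n^*$ it fails by a ``deficit'' of exactly $1$ or $2$, which must be absorbed by the wild part $\delta_v$.

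The heart of the matter — and the place where $\mathrm{char}\,K=2$ is genuinely used — is therefore to show that in characteristic $2$ these additive fibres always carry enough wild ramification: $\delta_v\ge 2$ for type $I_0^*$ and for $I_n^*$ with $n$ even, and $\delta_v\ge 1$ for $I_n^*$ with $n$ odd and for $III^*$. This is a statement about the conductor exponent of an elliptic curve over $\overline K((t))$ in residue characteristic $2$, and it is what forces the sharp bound $12$ rather than the weaker $16$ (resp.\ $20$) that the tame estimate (resp.\ Shioda--Tate) alone would yield; it will be supplied by the analysis of Weierstrass models and wild ramification developed for genus one fibrations. Granting this input, $2a(F_v)\le e(F_v)$ holds for every fibre type, and summation over $v$ completes the proof. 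I expect the verification of the wild-ramification lower bounds (in particular $\delta_v\ge 2$ for $I_0^*$ in characteristic $2$) to be the only nontrivial obstacle; the combinatorics of independent sets in affine Dynkin diagrams is routine.
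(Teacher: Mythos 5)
Your proposal is correct and takes essentially the same route as the paper: group the disjoint $(-2)$-curves by fibre, compare the maximal independent sets in the affine Dynkin diagrams with the local Euler contribution $e(F_v)+\delta_v$, and absorb the deficit at the additive types $\mathrm{I}_n^*$ and $\mathrm{III}^*$ using the characteristic-2 lower bounds on wild ramification (which the paper imports from Sch\"utt--Schweizer, Prop.~5.1). Your fibre-by-fibre counts and the required bounds ($\delta_v\geq 2$ for $\mathrm{I}_0^*$ and $\mathrm{I}_{2n}^*$, $\delta_v\geq 1$ for $\mathrm{I}_1^*$ and $\mathrm{III}^*$) agree with the paper's table.
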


At first, this result may seem rather surprising,
since usually, i.e.\ outside characteristic $2$, 
elliptic fibrations allow for as many as 16 disjoint $(-2)$-curves.
This happens in the case of 4 fibres of Kodaira type I$^*_0$,
each containing 4 disjoint $(-2)$-curves
-- for instance,  on the Kummer surface  of a product of two elliptic curves.

\begin{proof}
 What prevents the same as above to happen in characteristic $2$ 
 is  the fact
 that all additive fibres, except for Kodaira types IV, IV$^*$, come with wild ramification by \cite{SSc}.
 
 More precisely, there still is a representation of the Euler-Poincar\'e characteristic of the elliptic K3 surface $X$ as a sum over the fibres:
 \[
 24 = e(X) = \sum_v (e(F_v) + \delta_v).
 \]
 Here $\delta_v$ denotes the index of wild ramification,
studied in more generality in \cite{Deligne-wild}.
On an elliptic surface, it can be computed as the difference of the Euler number 
$e(F_v)$ and the local multiplicity of the discriminant
which  is the equation for the  singular fibres and may be computed on the Jacobian by \cite[p.348]{CDL}.
The bounds for $\delta_v$ in the next table have been taken from \cite[Prop.~5.1]{SSc}. 
Note that the number of components $m_v$ is the index of the Dynkin type plus one, while, except in the first case, the Euler number is $m_v+1$.
 The table also collects the maximal number $N_v$ disjoint (-2)-fibre components, to be computed below.

 \begin{table}[ht!]
 \begin{tabular}{c||c|c|c|c|c|c|c|c|c}
 fibre type & I$_n$ & II & III & IV & I$^*_n (n\neq 1)$ & I$^*_1$ & IV$^*$ & III$^*$ & II$^*$\\
 \hline
 \hline
 Dynkin type & $A_{n-1}$ & $A_0$ & $A_1$ & $A_2$ & $D_{n+4}$ & $D_5$ & $E_6$ & $E_7$ & $E_8$\\
 \hline
 $m_v$ & $n$ & 1 & 2 & 3 & $n+5$ & 6 & 7 & 8 & 9\\
 \hline
 $\delta_v$ & 0 & $\geq 2$ & $\geq 1$ & 0 & $\geq 2$ & 1 & 0 & $\geq 1$ & $\geq 1$\\
  \hline
 $e(F_v)$ & $n$ & 2 & 3 & 4 & $n+6$ & 7 & 8 & 9 & 10\\
  \hline
 $N_v$ & $\lfloor \frac{n}{2}\rfloor$ & 0 & 1 & 1 & $4 + \lfloor \frac{n}{2}\rfloor$ & 4 & 4 & 5 & 5\\

 \end{tabular}
 \end{table}
 
 For the convenience of the reader, we also include the dual graphs of the fibres
 in terms of the extended Dynkin diagrams $\tilde A_n, \tilde D_k\, (k\geq 4)$.
 For fibre types IV$^*$, III$^*$, II$^*$, we only give the Dynkin diagram $E_l\, (l=6,7,8)$
 for sake of a unified presentation. For these types 
the fibre is obtained by adding another   fibre component $e_0$ 
  adjacent  to the vertex $e_1$ in case $E_6$, resp.\  $e_2$ in case $E_7$, resp.\  $e_8$ in case $E_8$.
 
 In total, the  simple fibre components  (i.e.\ those having multiplicity 1 in the fibre) are:
 \begin{enumerate}
 \item[$\tilde A_n$]
 all components,
 \item[$\tilde D_k$]
 the exterior components,
 \item[$\tilde E_l$]
  $e_0, e_2, e_6$ ($l=6$) resp.\ $e_0, e_7$ ($l=7$), resp.\ $e_0$ ($l=8$).
 \end{enumerate}

\begin{figure}[ht!]
\setlength{\unitlength}{.6mm}
\begin{picture}(80,20)(5,25)
\multiput(3,32)(20,0){5}{\circle*{1.5}}
\put(3,32){\line(1,0){43}}
\put(83,32){\line(-1,0){23}}

\put(2,25){$a_1$}
\put(22,25){$a_2$}
\put(49,32){$\hdots$}
\put(82,25){$a_{n}$}
\put(3,32){\line(4,1){40}}
\put(83,32){\line(-4,1){40}}
\put(43,42){\circle*{1.5}}
\put(45,45){$a_0$}
\put(-50,35){$(\tilde A_n)$}

%
%

\end{picture}
\end{figure}

\begin{figure}[ht!]
\setlength{\unitlength}{.6mm}
\begin{picture}(100,25)(5,-4.5)
%
\put(-40,7){$(\tilde D_k)$}
\multiput(23,8)(20,0){4}{\circle*{1.5}}
\put(23,8){\line(1,0){23}}
\put(83,8){\line(-1,0){23}}
\put(49,8){$\hdots$}
\put(83,8){\line(2,1){20}}
\put(83,8){\line(2,-1){20}}
\put(103,18){\circle*{1.5}}
\put(103, -2){\circle*{1.5}}

\put(23,8){\line(-2,1){20}}
\put(23,8){\line(-2,-1){20}}
\put(3,18){\circle*{1.5}}
\put(3, -2){\circle*{1.5}}

\put(-6,18){$d_0$}
\put(-6,-2){$d_1$}
\put(22,1){$d_2$}
\put(78,1){$d_{k-2}$}
\put(106,17){$d_{k-1}$}
\put(106, -2){$d_{k}$}


\end{picture}
\end{figure}

\begin{figure}[ht!]
\setlength{\unitlength}{.6mm}
\begin{picture}(120,30)(3,-40)
%
\multiput(3,-32)(20,0){7}{\circle*{1.5}}
\put(3,-32){\line(1,0){83}}
\put(123,-32){\line(-1,0){23}}
\put(2,-39){$e_2$}
\put(22,-39){$e_3$}
\put(42,-39){$e_4$}
\put(62,-39){$e_5$}
\put(43,-32){\line(0,1){20}}
\put(43,-12){\circle*{1.5}}
\put(46,-13){$e_1$}
\put(89,-32){$\hdots$}
\put(122,-39){$e_l$}

\put(-31,-27){$(E_l)$}

\end{picture}
\end{figure}

Case by case, this allows us to compare the maximal number $N_v$ of disjoint (-2)-fibre components
with the contribution  to the Euler-Poincar\'e characteristic, see the above table.

Overall, we find
\begin{eqnarray}
\label{eq:N_v}
N_v \leq \frac 12 \lfloor e(F_v) + \delta_{v}\rfloor
\end{eqnarray}
and thus
\begin{eqnarray}
\label{eq:eqeq}
\sum_v N_v 
\leq   \sum_v \frac 12 \lfloor e(F_v) + \delta_{v}\rfloor
\leq \frac 12 \sum_v (e(F_v) + \delta_{v}) = 12.
\end{eqnarray}
This yields the desired  inequality and proves our assertion.

\end{proof}

\begin{rem}
\label{rem:ineq}
If equality holds at each step of the chain of inequalities 
$$N_v \leq \frac 12 \lfloor e(F_v) + \delta_{v}\rfloor \leq \frac 12 (e(F_v) + \delta_{v}),$$
then $\de_v$ attains its minimum value, and the multiplicity $(e(F_v) + \delta_{v}) $ is an even number,
hence we get only the types \[
\mathrm I_{2n} \; (n>0), \;\; \mathrm I_{2n}^*\; (n\geq 0), \;\;  \mathrm I_1^*,\;\;   \mathrm{IV}^*,\;\;\mathrm{III}^*.
\]

\end{rem}

%

\begin{cor}
\label{cor:12fibres}
If the fibres of an elliptic K3 surface in characteristic $2$ contain 12 disjoint $(-2)$-curves,
then the only possible singular fibre types are
 (with minimum possible $\delta_v$ each)
 \[
\mathrm I_{2n} \; (n>0), \;\; \mathrm I_{2n}^*\; (n\geq 0), \;\;  \mathrm I_1^*,\;\;   \mathrm{IV}^*,\;\;\mathrm{III}^*.
\]
\end{cor}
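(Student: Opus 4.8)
The plan is to obtain the corollary as the equality case of the chain of inequalities \eqref{eq:eqeq} established in the proof of Proposition \ref{lem:12}. No new geometric input is needed; the whole point is a tightness analysis.

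First I would fix twelve pairwise disjoint $(-2)$-curves $C_1,\dots,C_{12}$ lying in the singular fibres. Since distinct fibres of the elliptic fibration are disjoint, each $C_i$ is a component of a unique singular fibre; letting $n_v$ denote the number of the $C_i$ contained in the fibre $F_v$, we have $\sum_v n_v = 12$, and $n_v \le N_v$ by the very definition of $N_v$ as the maximal number of disjoint $(-2)$-fibre-components of a fibre of type $v$. Combining this with \eqref{eq:eqeq} yields
$$12 \;=\; \sum_v n_v \;\le\; \sum_v N_v \;\le\; 12,$$
so $\sum_v N_v = 12$ and every inequality in \eqref{eq:eqeq} is in fact an equality. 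Because \eqref{eq:N_v} bounds $N_v$ fibre by fibre with nonnegative slack, equality of the summed quantities forces equality fibre by fibre, so for each $F_v$ the whole chain $N_v \le \tfrac12\lfloor e(F_v)+\delta_v\rfloor \le \tfrac12(e(F_v)+\delta_v)$ is tight — which is precisely the hypothesis of Remark \ref{rem:ineq}.

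I would then invoke Remark \ref{rem:ineq} directly: tightness of this chain at $F_v$ forces $\delta_v$ to take its minimal value and $e(F_v)+\delta_v$ to be even, and by the table in the proof of Proposition \ref{lem:12} these two conditions are met only by the types $\mathrm I_{2n}\,(n>0)$, $\mathrm I_{2n}^*\,(n\ge 0)$, $\mathrm I_1^*$, $\mathrm{IV}^*$, $\mathrm{III}^*$, with $\delta_v$ minimal. It is worth stressing that this applies to \emph{every} singular fibre, not only to those meeting some $C_i$: for a fibre of type $\mathrm{II}$, $\mathrm{III}$, $\mathrm{IV}$ or $\mathrm{II}^*$ one has $N_v < \tfrac12(e(F_v)+\delta_v)$ whatever the value of $\delta_v$, so its mere presence would make $\sum_v N_v$ strictly smaller than $\tfrac12\sum_v(e(F_v)+\delta_v)=12$, contradicting the equality just derived.

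I do not anticipate any genuine obstacle: the statement is an immediate corollary of Proposition \ref{lem:12} and its proof. The only step that deserves a word of care is the passage from equality of the two sums in \eqref{eq:eqeq} to equality of the per-fibre inequalities \eqref{eq:N_v} — legitimate exactly because the latter hold term by term with nonnegative gaps — together with the remark that fibres disjoint from the $C_i$ are constrained in the same way; everything else is the type-by-type inspection already carried out in Remark \ref{rem:ineq}, resting on the values of $N_v$, $e(F_v)$ and the minimal $\delta_v$ tabulated in Proposition \ref{lem:12}.
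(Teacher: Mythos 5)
Your proposal is correct and follows essentially the same route as the paper: it extracts the corollary as the equality case of the chain of inequalities in the proof of Proposition \ref{lem:12}, forces termwise equality in \eqref{eq:N_v}, and then invokes the type-by-type inspection of Remark \ref{rem:ineq}. The extra remark that fibres disjoint from the twelve curves are constrained in the same way is a point the paper leaves implicit, but it is the same argument.
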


\begin{proof}
This is a direct consequence of the proof of Proposition \ref{lem:12}
since all the inequalities in \ref{eq:eqeq} actually have to be equalities
(in particular the  same must hold for \ref{eq:N_v} at each $v$,
as in Remark \ref{rem:ineq}).

\end{proof}

A close inspection of the fibres in the proof of Proposition \ref{lem:12}
allows us even to rule out higher ADE-types:

\begin{cor}
\label{cor:ADE}
If the  fibres of an elliptic K3 surface in characteristic 2 support 12 disjoint ADE-configurations
of $(-2)$-curves, then each has type $A_1$.
\end{cor}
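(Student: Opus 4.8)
The plan is to bootstrap from Proposition \ref{lem:12} and Corollary \ref{cor:12fibres}. First I would note that each of the $12$ configurations is connected (its dual graph is an ADE Dynkin diagram), and since distinct fibres of the fibration are disjoint, each configuration is therefore contained in a single fibre. Choosing one $(-2)$-curve from each configuration produces $12$ pairwise disjoint $(-2)$-curves (pairwise disjoint because the configurations are), so by Proposition \ref{lem:12} this is the maximal possible number. Now if some configuration had rank $\geq 3$, its Dynkin diagram would be a tree on at least $3$ vertices and so would contain two non-adjacent vertices, i.e.\ two disjoint $(-2)$-curves; together with one curve from each of the remaining $11$ configurations this gives $13$ disjoint $(-2)$-curves, contradicting Proposition \ref{lem:12}. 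Hence every configuration has type $A_1$ or $A_2$, and it remains only to exclude type $A_2$.

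Suppose then that some configuration $\Gamma$ has type $A_2$, say $\Gamma=\{C,C'\}$ with $C\cdot C'=1$, lying in a fibre $F_v$. For each fibre $F_w$ let $c_w$ be the number of configurations contained in $F_w$; choosing one curve from each of them gives $c_w$ disjoint $(-2)$-curves among the components of $F_w$, whence $c_w\leq N_w$ with $N_w$ as in the table in the proof of Proposition \ref{lem:12}. Since $\sum_w c_w=12$ and $\sum_w N_w\leq 12$ by \eqref{eq:eqeq}, all these inequalities are equalities; in particular $c_w=N_w$ for every $w$. Moreover, having exhibited $12$ disjoint $(-2)$-curves in the fibres, Corollary \ref{cor:12fibres} applies, so the singular fibre types are restricted to $\mathrm I_{2n}\,(n\geq 1)$, $\mathrm I^*_{2n}\,(n\geq 0)$, $\mathrm I^*_1$, $\mathrm{IV}^*$, $\mathrm{III}^*$, with minimal $\delta_v$.

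It thus suffices to derive a contradiction inside $F_v$ alone: it carries $c_v=N_v$ pairwise disjoint configurations, one being the edge $\{C,C'\}$ of the dual graph $G_v$, while the other $N_v-1$ configurations avoid $C$, $C'$ and every component meeting one of them; so choosing one curve from each of those yields an independent set of size $N_v-1$ in the graph obtained from $G_v$ by deleting the closed neighbourhood $N[\{C,C'\}]$ of that edge. The final, and principal, step is then a short inspection of the dual graphs listed above — the cycles $\tilde A_{2n-1}$, the trees $\tilde D_{2n+4}$ and $\tilde D_5$, and $\tilde E_6,\tilde E_7$, as drawn in the proof of Proposition \ref{lem:12} — showing that for each of them and every edge, deleting the closed neighbourhood of that edge decreases the maximal number of disjoint vertices to at most $N_v-2$. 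For a $2n$-cycle this is immediate (one is left with a path on $2n-4$ vertices, independence number $n-2$); for the $\tilde D$-graphs one is left with one or two ``brooms'' (a path with a two-pronged fork at one end) and a routine $\lceil\cdot\rceil$-estimate bounds the sum of their independence numbers by $N_v-2$; the two $\tilde E$-cases are finite checks. Since $N_v-1>N_v-2$, this contradicts $c_v=N_v$, so no configuration is of type $A_2$. (The fibre $\mathrm I_2$ never arises as $F_v$, since its two components meet with multiplicity $2$ and so cannot form an $A_2$-configuration.) I expect this last combinatorial verification to be the main obstacle: it is elementary diagram by diagram, but it is the one place where the restricted list of fibre types from Corollary \ref{cor:12fibres}, rather than merely the bound of Proposition \ref{lem:12}, is genuinely needed.
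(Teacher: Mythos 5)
Your proof is correct and takes essentially the same route as the paper's: extract $12$ disjoint $(-2)$-curves (one per configuration), invoke Corollary \ref{cor:12fibres} to restrict the fibre types, and then check directly on the resulting dual graphs that no configuration can have rank $\geq 2$. The paper compresses the final verification into the words ``check directly''; your counting argument forcing $c_w=N_w$ together with the edge-neighbourhood/independence-number inspection is a complete and accurate way to carry out that check.
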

\begin{proof}
These 12 disjoint ADE-configurations produce at least 12 disjoint $(-2)$-curves, hence we may apply the
previous corollary and check directly.
\end{proof}

\subsection{ Connection with supersingularity}

 To relate  with Theorem \ref{theo},
especially with the statement about supersingular K3 surfaces,
we provide the next result which concerns the case of exact equality in Proposition \ref{lem:12}.

\begin{prop}
\label{lem:=12}
Let $X$ be an elliptic K3 surface such that there are 
 12 disjoint $(-2)$-curves contained in the fibres.
Then $X$ is supersingular or 
there are two additive fibres.
\end{prop}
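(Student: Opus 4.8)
The plan is to combine the Euler-characteristic bookkeeping already established in Proposition \ref{lem:12} and its corollaries with the Shioda--Tate formula in order to force the Picard number up to $22$ unless we are in the exceptional case of two additive fibres. By Corollary \ref{cor:12fibres}, equality in Proposition \ref{lem:12} forces every singular fibre to be of one of the types
$$
\mathrm I_{2n}\;(n>0),\quad \mathrm I_{2n}^*\;(n\geq 0),\quad \mathrm I_1^*,\quad \mathrm{IV}^*,\quad \mathrm{III}^*,
$$
each carrying the minimal possible wild ramification index $\delta_v$, and moreover the inequality \ref{eq:N_v} is an equality at every fibre. The key observation is that among these types, only $\mathrm I_{2n}$ ($n>0$) is multiplicative; all the starred types and $\mathrm{IV}^*$, $\mathrm{III}^*$ are additive. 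So the dichotomy in the statement is really: either there are at most one additive fibre, or there are two (or more). I would first dispose of the case ``at most one additive fibre'' by showing it forces supersingularity, and then argue that ``at least two additive fibres'' is exactly the alternative.

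**The main computation.** Suppose there is at most one additive fibre. First, if there are \emph{no} additive fibres, then all singular fibres are of type $\mathrm I_{2n}$, contributing a total ADE-lattice rank $r_{\mathrm{fib}} := \sum_v(m_v-1)$. Since $e(X)=24 = \sum_v e(F_v)$ and each $\mathrm I_{2n}$ fibre has $e(F_v) = 2n = m_v$, while equality in \ref{eq:N_v} with the table gives $N_v = n = m_v/2$, summing over fibres yields $\sum_v N_v = \frac12\sum_v m_v = \frac12\cdot 24 = 12$ — consistent — and $r_{\mathrm{fib}} = \sum_v(2n_v - 1) = 24 - (\text{number of singular fibres})$. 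By Shioda--Tate, $\rho(X) = 2 + r_{\mathrm{fib}} + \mathrm{rank}(\mathrm{MW})$. I would then invoke the fact (Shioda--Tate plus the discriminant/Euler bookkeeping in characteristic $2$, cf.\ \cite{SSc}, \cite{rudakov-shafarevich}) that when every fibre is of the forced type with minimal $\delta_v$, the fibration has no wild ramification surplus to ``absorb'' and the computation of $\rho$ via $e(X)$ and $b_2 = 22$ pins $\rho(X) = 22$; concretely, one checks that $22 - 2 - r_{\mathrm{fib}} = \mathrm{rank}(\mathrm{MW})\geq 0$ leaves no room unless the lattice is already of full rank. If there is exactly one additive fibre, I would run the same count: the single additive fibre of type $\mathrm I_{2n}^*$, $\mathrm I_1^*$, $\mathrm{IV}^*$ or $\mathrm{III}^*$ contributes $e(F_v) = m_v + 1$ and $\delta_v \geq 1$ (or $\delta_v = 0$ for $\mathrm{IV}^*$), and I would show that its presence, together with the remaining $\mathrm I_{2n}$ fibres, still forces the trivial lattice plus fibre lattice to have rank $22$, again via $e(X) = 24$ and Shioda--Tate. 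In particular the transcendental lattice has rank $0$, so $X$ is supersingular.

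**Wrapping up and the main obstacle.** Having shown ``at most one additive fibre $\Rightarrow$ supersingular,'' the contrapositive gives: if $X$ is not supersingular then there are at least two additive fibres, which is the claimed alternative. I would phrase the conclusion as: either $X$ is supersingular, or there are (at least, hence exactly two by the Euler count --- two starred or $\mathrm{IV}^*/\mathrm{III}^*$ fibres already eat up a large share of the $24$) two additive fibres. The step I expect to be the main obstacle is the precise Shioda--Tate / Euler-characteristic argument in the one-additive-fibre case: I need to be careful that the wild ramification index $\delta_v = 1$ at the additive fibre is correctly accounted for in the relation between $e(X)$, the sum of fibre Euler numbers, and the rank of the trivial lattice — in characteristic $0$ one would immediately get $\rho = 22$ from $e = 24$ and $24 = 2 + \sum(m_v - 1) + (\text{number of fibres})$ type identities, but here the $\delta_v$ terms shift the bookkeeping, and I must verify that they shift it in exactly the direction that still forces supersingularity (i.e.\ that the ``missing'' contribution to $e(X)$ from wild ramification does not create transcendental classes). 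Once that identity is nailed down, the rest is the routine case check against the short list in Corollary \ref{cor:12fibres}.
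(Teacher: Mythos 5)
There is a genuine gap at the heart of your argument: the claim that Shioda--Tate plus Euler-characteristic bookkeeping ``pins $\rho(X)=22$''. It does not. Shioda--Tate gives $\rho = 2 + \sum_v(m_v-1) + \mathrm{rank}(\mathrm{MW})$, and the hypothesis of twelve disjoint $(-2)$-curves only bounds $\sum_v(m_v-1)$ from below; in the minimal configurations compatible with Corollary \ref{cor:12fibres} (e.g.\ eight fibres of type $\mathrm{I}_2$ and one of type $\mathrm{I}_0^*$, or twelve fibres of type $\mathrm{I}_2$) one only gets $\rho\geq 14$, and neither the Mordell--Weil rank nor the rank of the transcendental lattice is determined by $e(X)=24$ or by the wild ramification indices. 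So ``at most one additive fibre $\Rightarrow$ supersingular'' cannot be extracted from lattice and Euler-number considerations alone; the step you flag as the ``main obstacle'' is not a bookkeeping subtlety to be nailed down but the entire content of the proposition, and your proposed identity $24 = 2+\sum(m_v-1)+(\text{number of fibres})$ has no analogue here that controls $\mathrm{rank}(\mathrm{MW})$.

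The paper's proof is of a different nature. From the evenness of $e(F_v)+\delta_v$ at every place (forced by equality in \eqref{eq:eqeq}) it deduces that the discriminant $\Delta$ is a square in $k(t)$; it then passes to the Jacobian, normalizes $a_1$ up to M\"obius transformations into the cases $a_1\equiv 0$, $a_1=t$, $a_1=t^2$, and shows that when $a_1$ has two distinct zeros the square condition forces an additive fibre at each of them --- this is where the ``two additive fibres'' alternative actually comes from, not from a generic case split on the list of fibre types. In the remaining case $a_1=t^2$ (at most one additive fibre) it derives Weierstrass normal forms via Tate's algorithm, computes the moduli dimension of each resulting irreducible family ($7$, $6$, $5$, or $8$), and compares with the Lieblich--Maulik/Deligne bound: a non-supersingular K3 with $\rho\geq 14$ deforms in a family of dimension at most $6$, which is exceeded by the moduli count. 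Your proposal supplies no substitute for the square-discriminant observation or for the moduli-versus-deformation comparison, and without them the conclusion does not follow.
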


 Note that the  fibres in Proposition \ref{lem:=12} are the fibres of  Corollary \ref{cor:12fibres}:
either $\mathrm I_{2n}$, or additive fibres which  
are  non-reduced.  This will be of great use in what follows.

\begin{rem}
\label{rem:occur}
(i)
It is easy to see that both cases of Proposition \ref{lem:=12} can occur:  the first one via 
inseparable base change from 
rational elliptic surfaces {(see \cite[p.\ 342]{MWL}, Proposition 12.32)} 
the other one (as in characteristic zero!)
 by taking the Kummer surface of the product of two elliptic curves  (both not supersingular):
 here there are  two singular fibres of Kodaira type $\mathrm{I}_4^*$ by \cite{shioda}.

(ii)
The second case of Proposition \ref{lem:=12} encompasses the case where there are 12 disjoint
$(-2)$-curves contained in the fibres and the   j-invariant is constant, since then every reducible fibre is additive, and if
there were a single reducible fibre, it would have type $\mathrm I_{16}^*$,  which
is impossible by \cite{S-max}.
\end{rem}

\begin{proof}[Proof of Proposition \ref{lem:=12}]
If the singular fibres contain 12 disjoint $(-2)$-curves,
then by the proof of Proposition \ref{lem:12},
both inequalities in \ref{eq:eqeq}
are in fact equalities,
with fibre types given in Corollary \ref{cor:12fibres}.

Hence $\delta_v$ attains the minimal possible value $ \delta_{v}(min)$  and $e(F_v) + \delta_{v} = 
e(F_v) + \delta_{v}(min) $ is always even.

Since $e(F_v) + \delta_{v}$  is exactly the vanishing order of the discriminant $\Delta$ at $v$ by  \cite{Ogg},
we find that $\Delta$ is a square in $k(t)$.

\subsubsection{The Jacobian fibration}
We now switch to the Jacobian $J$ of $X$ -- another elliptic K3 surface,  since it shares the same
invariants  of $X$ by \cite[Cor.\ 5.3.5]{CD}.
Note that $J$ also has the same Picard number as $X$,
but, by definition, $J$  has a section while $X$ may not.

By \cite[Theorem  5.3.1]{CD} $J$ and $X$ share the same  singular fibres  
(and by \cite[p.348]{CDL} also the same $\Delta$ and $\delta_v$ (minimal!))
 since, by virtue of the canonical bundle formula (Theorem 2 of \cite{bm}), there are no multiple fibres. 
 
In terms of a minimal Weierstrass equation for $J$,
\begin{equation}
\label{eq:WF}
y^2 + a_1 xy + a_3 y = x^3 + a_2 x^2 + a_4 x + a_6, \;\;\; a_i\in k[t], \deg(a_i)\leq 2i,
\end{equation}
there are essentially two options for $a_1$ (since $a_1\equiv 0$ forces all singular fibres to be additive
and is thus covered by the second alternative of Proposition \ref{lem:=12}, 
cf. Remark \ref{rem:occur} (ii)), up to M\"obius transformations:
\[
a_1 = t \;\;\; \text{ or } \;\;\; a_1 = t^2.
\]
In the first case, we can argue directly with the general expression of the discriminant,
\begin{eqnarray}
\label{eq:Delta}
\Delta = a_3^4+a_1^3a_3^3
+ a_1^4a_4^2
+a_1^4a_2a_3^2
+ a_1^5a_3a_4
+a_1^6a_6.
\end{eqnarray}
Notably, if $a_1=t$, then  this reads modulo $t^4$
\[
\Delta \equiv a_3(0)^4 + a_3(0)^3t^3 \mod t^4,
\]
so $\Delta$ can only be a square if $a_3(0)=0$ which makes the fibre at $t=0$ singular and in fact additive.
By symmetry, the same reasoning applies at $t=\infty$, so there are  two additive fibres and we 
reach the second alternative of this proposition.

\subsubsection{ Normal forms for additive fibre types}

There  remains to study the case $a_1=t^2$.
 We start arguing with the minimality of $\delta_0$ to reduce to just 3 cases.
 
If there is a singular fibre at $t=0$  (then $a_3$ vanishes at $t=0$ and we have an additive fibre), 
then we can use Tate's algorithm to develop a normal form for the fibre \cite{Tate}, \cite[IV.9]{Si3}.

For fibres of type I$^*_{2n}$, the normal form is
\begin{eqnarray}
\label{eq:2n^*}
y^2 + t^2 xy + t^{n+2}a_3' y  & = & x^3 + ta_2' x^2 + t^{n+2}a_{4}'x + t^{2n+4} a_6' 
\end{eqnarray}
with $t\nmid a_2'a_4'$;
here we have used  Steps 6 and Step 7 in \cite[IV.9]{Si3},  pages 367-368. 
For $n=0$ we use indeed Step 6, and  the fact that the auxiliary polynomial $P(T)$ in loc.\ cit.\
has three distinct roots to infer that $t\nmid a_2'a_4'$ after locating one root at $T=0$. 
For $ n =1$  the assertions are proven
in Step 7, page 367; for higher $n$ one proceeds by induction on $n$, 
 see  line 8 of page 368 concerning the assertion 
  on the divisibility of $a_3, a_4, a_6$
 going up in each induction step.
 Note that by the argument in loc.\ cit., the divisibility of $a_6$ grows in fact by two in each of our steps.
This shows that $t^{n+2}\mid a_3, a_4$ and $t^{2n+3}\mid a_6$ and then 
 a translation in $x$  ensures that 
 indeed  $t^{2n+4}\mid a_6$ as claimed.

Substituting into \ref{eq:Delta} gives 
\[
\Delta = t^{4n+8}a_3'^4 + t^{3n+12}a_3'^3 + t^{2n+12} a_4'^2 + h.o.t.,
\]
whence, for the wild ramification 
$$\de_0 = \ord (\De) - e (F) = \ord(\Delta) - (2n+6) \geq 2n+2,
$$
we have $\delta_0\geq 4$ for $n>0$.
Since Corollary \ref{cor:12fibres} requires minimal wild ramification $\delta_0=2$, 
this leaves only fibres of type I$_0^*$ among all fibre types I$_{2m}^*$.

For a fibre of type I$^*_{1}$, the normal form 
arises from an additional vanishing condition at $a_4$ compared to \eqref{eq:2n^*},
again by \cite[IV.9, Step 7]{Si3}:
\begin{eqnarray*}
\label{eq:2n+1^*}
y^2 + t^2 xy + t^{2}a_3' y = x^3 + ta_2' x^2 + t^{3}a_{4}'x + t^{4} a_6' \;\;\; \text{ with } \;\; t\nmid a_2'a_3'.
\end{eqnarray*}

Then fibre type IV$^*$ is given by further imposing $t^2\mid a_2$ by \cite[IV.9, Step 8]{Si3}, still with $t\nmid a_3'$
(in agreement with $\delta_v=0$).
Meanwhile a fibre of type III$^*$ imposes additional vanishing conditions 
$t^3\mid a_3,\; t^5\mid a_6$, but $t^4\nmid a_4$ \cite[IV.9, Step 9]{Si3}.
Substituting into \ref{eq:Delta} gives 
\[
\Delta = t^{12}a_3'^4 + t^{14} a_4'^2 + t^{15}a_3'^3 + h.o.t.,
\]
so in particular $\delta_0\geq 3$, ruling out fibre type III$^*$ by Corollary \ref{cor:12fibres} again.

To sum it up,
the only  additive  fibre  types remaining 
from Corollary \ref{cor:12fibres}
 are
I$_0^*$, I$_1^*$ and IV$^*$.
In each  case, one can easily parametrize all K3 surfaces with such a given fibre and square discriminant,
 starting from the above normal form.
It should be noted that for these types the normal form can be derived by means of a linear transformation 
\begin{eqnarray}
\label{eq:adm}
(x,y) \mapsto (x+\alpha_4, y+\alpha_2 x + \alpha_6)
\end{eqnarray}
with $\alpha_i\in k[t]$ of degree at most $i$;
in particular, the degree bounds of \eqref{eq:WF} are preserved.

\subsubsection{ Conditions for the  discriminant to be a square}
For type I$_0^*$, \ref{eq:2n^*} leads to the discriminant
%
\[
\Delta = t^8(a_3'^4+t^4a_3'^3
+ t^4a_4'^2
+t^5a_2'a_3'^2
+ t^6a_3'a_4'
+t^8a_6')
\]
where, by the minimality of wild ramification, $t\nmid a_3'$.
 Modulo square summands, this simplifies as
\[
\Delta \equiv t^{12}(a_3'^3
+ta_2'a_3'^2
+ t^2a_3'a_4'
+t^4a_6') \mod   k[t]^2.
\]
Write $a_i' = \sum_j a_{i,j}'t^j$.
Then the condition that $\Delta$ is a square, i.e.\ that all odd degree coefficients vanish, determines

\begin{itemize}
\item
the odd degree coefficients of $a_6'$ in terms of the coefficients of
the other forms $a'_m$ (looking at the coefficients of $\Delta$ at $t^{17},\hdots,t^{23}$).
\item
$a_{2,0}' = a_{3,1}'$ (from the $t^{13}$-coefficient);
\item
$a_{4,1}' = (a_{2,2}'a_{3,0}'^2 + a_{3,0}'^2a_{3,3}' + a_{3,1}'a_{4,0})/a_{3,0}'$ (from the $t^{15}$-coefficient).
\end{itemize}

In particular, we find that the family
of elliptic K3 surfaces with a fibre of type I$_0^*$ with wild ramification of index 2
and all other singular fibres of type I$_{2n}$ (generically 8 I$_2$'s)   is irreducible.

Its moduli dimension, equal  to $7$,  is obtained by comparing
the degrees
\[
\deg(a_3')\leq 4, \;\; \deg(a_2')\leq 3,\;\; \deg(a_4')\leq 6, \;\; \deg(a_6')\leq 8
\]
(these bounds follow from the degree bounds in \eqref{eq:WF} and from  \eqref{eq:2n^*}),
 against M\"obius transformations $t\mapsto ut/(\varepsilon t +1) \; (u \in k^\times, \varepsilon\in k)$
and the following variable transformations preserving the shape of \ref{eq:2n^*}
(since $\Delta$ being a square is automatically preserved):
\begin{eqnarray}
\label{eq:adm'}
(x,y) \mapsto (u^4x+t^2\beta_2, u^6y+t\beta_1 x + t^2\beta_4)
\end{eqnarray}
where the degree of each polynomial $\beta_i\in k[t]$ is at most $i$.

\subsubsection{ Conclusion of proof using number of moduli}

Any smooth K3 surface arising from a  member of the above family satisfies
\[
\rho \geq 2 + 8 + 4 = 14
\]
by the Shioda--Tate formula
where the first entry comes from the zero section and the fibre,
 the second from the semi-stable fibres  (each of type I$_{2n}$ for some $n\in\NN$,
 hence contributing $2n$ to the Euler--Poincar\'e characteristic and $2n-1$ to the Shioda--Tate formula)
and the third from the fibre at $t=0$ (contributing $8$ to the Euler--Poincar\'e characteristic,
including wild ramification, and $4$ to the Shioda--Tate formula).
If a very general member 
 were not supersingular,
then it would deform in a $6$-dimensional family 
as in \cite[Prop.\ 4.1]{LM} (based on \cite{Deligne})
but this is exceeded by our moduli count.
Hence the whole family is supersingular as claimed.

We pass now to the case of a fibre of type I$_1^*$ or of  type IV$^*$.
As explained before, the K3 surfaces with a fibre of type I$_1^*$ are contained in the subfamily
where  $t^3\mid a_4$ (while for I$_0^*$ we simply had $t^2\mid a_4$) and  
 type IV$^*$ additionally requires $t^2\mid a_2$.
Each family  allows  the same transformations, so 
the moduli dimension is 6,  resp.\ 5.
But  $\rho$ generically goes up by 1 each time (promoting the root lattice 
at the special fibre from $D_4$ through $D_5$ to $E_6$),
so 
%
%
 the whole family is supersingular 
  again by  \cite[Prop.\ 4.1]{LM}.

If there is no additive fibre, then  the condition that $\Delta$ is a square gives 9 moduli for $\Delta$:  moreover
 the condition 
that $ a_1 = t^2$ reduces the number of moduli to 8, and one can show by  the same kind of arguments as above
that we have   an irreducible 8-dimensional family
of semi-stable elliptic K3 surfaces with 12 disjoint $A_1$'s  embedding into the singular fibres;
since $\rho\geq 2+12=14$,
  again by the formula of \cite[Prop.\ 4.1]{LM} the family is supersingular. 

\end{proof}

\begin{rem}
 Another possible argument of proof  is as follows: in each case we have an irreducible family of a certain dimension $k$,
and inside it we can construct a family   of  the same dimension $k$ of surfaces arising   via an  inseparable base change
from a rational elliptic surface. The surfaces are   thus  unirational, hence supersingular, and this shows directly that the
original family is a family of supersingular surfaces.

Indeed, starting from rational elliptic surfaces with singular fibre at $t=0$ of type
$\mathrm I_0$ (smooth supersingular), $\mathrm{II}, \mathrm{III}, \mathrm{IV}$, respectively, inseparable base change exactly results in a family of supersingular K3 surfaces
of the expected type and dimension.
Note that, since the elliptic fibrations  admit a 2-torsion section by \cite[p.342]{MWL},
the Artin invariants   \cite{artinSS} satisfy $\s \leq 9$.
\end{rem}

\section{Proof of  the main claim: there cannot be at least  15 singularities.}
\label{s:quasi}

In order to bound the number of singularities on a normal quartic $X\subset\PP^3$,
we shall use the theory of genus 1 fibrations  laid out in the previous section.

By Proposition  \ref{cor:9}, if $X$ has at least $13$ singular points ($\nu\geq 13$),
then  the singularities are rational double points
and the minimal resolution $S$ is a  K3 surface.

%
%
%
%
%
%
%
$S$  is endowed with the following divisors:
the pull-back $H$ of a plane section
and, for each pair of singular points, say $P_1, P_2$,     the respective fundamental cycles $D_1, D_2$ 
(see \cite{artin}), consisting of the 
exceptional curves with suitable multiplicities, and equal to the pull back of the maximal ideal at the singular point.

Then, since $D_i^2 = -2$,  
\[
E:= H-D_1-D_2
\] 
gives an effective isotropic class in $\Pic(S)$.

We have that the linear system $|E|$ is base point free if and only if 
the line $L=\overline{P_1P_2}$ is not contained in $X$: this is clear for the points of $S$ not lying over $P_1, P_2$;
moreover, since for each exceptional curve $C$ the intersection number $D_i C \leq 0$, $E$ has no fixed part
(it  was observed at the beginning of the previous section that the fixed part $\Psi$ satisfies, if non empty,  $ E \Psi < 0$)
hence it has no base points since $E^2=0$.

If instead the line $L$ is contained in $X$, denote still by $L$ the strict transform of the line and 
 replace $E$ by $E-L$, observing that $E L = -1$, hence $(E-L)^2 =0$,
and continue until we get a base point free pencil $|E'|$, which gives a morphism $ S \ra \PP^1$ whose fibres 
correspond to the planes
through $P_1, P_2$.

\begin{prop}
\label{prop:>14}
Let $X\subset\PP^3$ be a normal
quartic with at least  15 singularities.
Then every genus one pencil $|E'|$ arising from two singularities on $X$ as above
is quasi-elliptic. 
\end{prop}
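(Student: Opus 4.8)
The plan is to present the fibration $f\colon S\to\PP^1$ attached to $|E'|$ as a genus one fibration on a K3 surface and then to rule out that it is elliptic. Since $\nu\geq 15>13$, Proposition \ref{cor:9} applies, so all singularities of $X$ are rational double points and $S$ is a K3 surface; hence the construction preceding the statement genuinely yields a base point free pencil $|E'|$ with $(E')^2=0$, i.e.\ a genus one fibration $f$, which is either elliptic or quasi-elliptic. To exclude the elliptic case I would invoke Proposition \ref{lem:12}, which caps at $12$ the number of pairwise disjoint $(-2)$-curves contained in the fibres of an elliptic K3 surface in characteristic $2$.

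So the heart of the matter is to exhibit at least $13$ pairwise disjoint $(-2)$-curves on $S$ lying in fibres of $f$ (necessarily in reducible or non-reduced fibres, a smooth fibre being integral). Let $\pi\colon S\to X$ be the resolution, and for each singular point $P_k$ of $X$ with $k\neq 1,2$ pick one reduced exceptional $(-2)$-curve $C_k$ over $P_k$; such curves over distinct points are disjoint. I claim $C_k\cdot E'=0$ whenever $P_k\notin L:=\overline{P_1P_2}$. Indeed $C_k\cdot H=0$ as $C_k$ is $\pi$-exceptional, $C_k\cdot D_1=C_k\cdot D_2=0$ since the supports lie over distinct points, and, writing $E'=H-D_1-D_2$ when $L\not\subset X$ and $E'=H-D_1-D_2-\widetilde L-(\cdots)$ when $L\subset X$ (the remaining summands being $\widetilde L$ and exceptional curves over singular points of $X$ on $L$), those remaining summands are disjoint from $C_k$ once $P_k\notin L$. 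Hence each such $C_k$ is a fibre component of $f$.

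It remains to collect $13$ of them. Recall (cf.\ Section \ref{ss:pfofclaim}) that normality of $X$ forbids four collinear singular points and that a line through three singular points of $X$ lies in $X$; thus $L$ carries at most three singular points. If $L$ carries only $P_1,P_2$, then the $\nu-2\geq 13$ curves $C_k$ already suffice. If $L$ carries a third singular point $P_3$, then $L\subset X$, the $\nu-3\geq 12$ curves $C_k$ with $P_k\notin L$ are fibre components, and I would add the strict transform $\widetilde L$ — a $(-2)$-curve on the K3 surface $S$, disjoint from all those $C_k$ — as a thirteenth curve. The one identity that then needs an explicit check is $E'\cdot\widetilde L=0$: the reduction producing the base point free class $E'$ from $E=H-D_1-D_2$ must absorb $\widetilde L$ together with exceptional curves over $P_3$ so that $\widetilde L$ ends up a fibre component rather than a section. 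When $P_1,P_2,P_3$ are all nodes this is an immediate intersection computation, and the higher $A_n$-types should go the same way.

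Granting this, in every case $f$ carries at least $13$ pairwise disjoint $(-2)$-curves in its singular fibres, contradicting Proposition \ref{lem:12} for an elliptic K3 surface in characteristic $2$; hence $f$ is quasi-elliptic, and since the pair $P_1,P_2$ was arbitrary this proves the proposition. I expect the only genuine obstacle to be the bookkeeping in the case $L\subset X$ with a third collinear singularity: one must carry out the reduction from $E$ to $E'$ according to the ADE-types of $P_1,P_2,P_3$, verify that $\widetilde L$ becomes orthogonal to $E'$, and check that the exceptional curves over $P_1,P_2,P_3$ (which typically turn into sections or multisections) do not drop the count of disjoint fibre components below $13$.
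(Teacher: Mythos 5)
Your proposal is correct and follows essentially the same route as the paper: exhibit at least $13$ pairwise disjoint $(-2)$-curves among the fibre components of the pencil induced by $P_1,P_2$ and conclude from Proposition \ref{lem:12} that the fibration cannot be elliptic. The one step you flag as the genuine obstacle --- verifying that the strict transform $\widetilde L$ ends up orthogonal to $E'$ when a third singular point lies on $L$, with bookkeeping depending on the ADE-types --- is handled in the paper by a geometric observation that avoids the lattice computation entirely: taking a smooth point $P$ of $X$ on $L$ and the tangent plane $H$ to $X$ at $P$, one has $H\cap X\geq 2L$, so $\widetilde L$ is a component of the fibre corresponding to $H$ (hence automatically orthogonal to the fibre class), uniformly in the types of $P_1,P_2,P_3$. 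Your direct computation does check out for nodes (one finds $E'=E-\widetilde L-C_3$ with $E'\cdot\widetilde L=0$ and $C_3$ a section), but if you want to avoid a case-by-case verification for higher $A_n$-points you should substitute the tangent-plane argument.
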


\begin{proof}
 
Let $\nu\geq 15$ denote the number of singularities,  $P_1,\hdots,P_{\nu}$,
and let $C_i^{j}, j = 1, \dots, n(i)$ be the  irreducible exceptional curves lying above the point $P_i$.

Let us first assume that no $P_i \, (i>2)$  lies on $L$,
so that each lies on a unique plane through $P_1, P_2$;
hence the $C_i^j$'s  are components of the corresponding fibre of $|E'|$
(as in Remark \ref{rem:-2}).
Then  the fibration $|E'|$ has $\nu-2>12$ disjoint smooth rational fibre components (the $C_i^j$).

If, on the other hand, there is a third singularity on $L$, say $P_3$,
then this implies not only that $L\subset X$, but also that $L$ appears as a multiple component
of $X\cap H$ for a unique plane $H$ (just take a  point $P \in L$ which is a smooth point of $X$,
and let $H$ be the tangent plane to $X$ at $P$: then $ H \cap X \geq 2L$).

This implies that $L$ is a component of the  fibre corresponding to $H$,
and, together with $C_4^1,\hdots,C_{\nu}^1$, we obtain $\nu-2>12$ disjoint smooth rational fibre components
as before.

In both cases 
 the proposition follows then from Proposition \ref{lem:12}. 
 \end{proof}

 \begin{prop}
 \label{prop:cusps}
     Let $X\subset\PP^3$ be a normal quartic.
     Let $L$ be a line through two singular points of $X$ such that $X\cap L$ consists
     of nodes and  biplanar double points (and smooth points if $L\subset X$).
  If the fibration induced by $L$ is 
quasi-elliptic, then  the  line dual to $L$ is contained in the dual surface $X^{\vee}$. 
\end{prop}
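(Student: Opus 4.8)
The plan is to analyze the quasi-elliptic fibration $f: S \to \PP^1$ induced by the pencil $|E'|$ associated to the line $L = \overline{P_1 P_2}$, whose fibres correspond to the planes $H$ through $L$. The key point is that for a quasi-elliptic surface in characteristic $2$, the cusps of the (generically cuspidal) fibres sweep out a curve, the \emph{curve of cusps} $B \subset S$, which is a smooth rational curve mapping isomorphically to $\PP^1$ (and is in fact a multisection of degree $1$, i.e.\ a ``section'' in the quasi-elliptic sense); see the structure theory of quasi-elliptic fibrations. I would first recall this and observe that $B$ is disjoint from, or meets controlledly, the exceptional $(-2)$-curves over the nodes and biplanar points — indeed since those are $ADE$-configurations of type $A_n$ appearing as fibre components, and $B$ is a $1$-section, $B$ meets each fibre in exactly one point, which for a reducible fibre lands on a specific component.

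Next I would translate ``$B$ is the curve of cusps'' into a statement about the Gauss map. For a plane $H \supset L$, the residual intersection $H \cap X = L + C_H$ (if $L \subset X$) or $H \cap X = C_H$ is a plane cubic, and the genus-one fibre is the strict transform of this cubic; the fibration being quasi-elliptic means $C_H$ is (generically) a cuspidal cubic, and the cusp of $C_H$ traces out $B$ as $H$ varies. The crucial geometric fact is that at the cusp $q_H$ of the cuspidal cubic $C_H \subset H$, the tangent line to $C_H$ — hence the tangent plane to $X$ at $q_H$ — is \emph{constant equal to $H$} is false in general, but rather: the Gauss image $\ga(q_H) = \nabla F(q_H)$ is the point of $\sP = (\PP^3)^\vee$ corresponding to the tangent plane $T_{q_H}X$, and one shows $T_{q_H} X = H$. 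This is the analogue in characteristic $2$ of the classical fact that a cusp of a plane section is a point where the plane is ``hyper-tangent''; concretely, a cuspidal cubic in a plane $H$ has the property that $H$ is the osculating/tangent plane to $X$ along that point because the cusp forces the contact order to jump. Hence as $H$ runs over the pencil of planes through $L$, $\ga(q_H) = [H] \in L^\vee \subset \sP$, so $\ga(B) = L^\vee$, and since $B \subset S$ maps to $X^\vee$ under $\tilde\ga$, we conclude $L^\vee \subset X^\vee$.

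More carefully, I would argue as follows. Pick a general plane $H_0 \supset L$ and general affine coordinates so that $H_0 = \{w = 0\}$ and $L$ is a coordinate line; the pencil is $\{H_\lambda = \{w = \lambda v\}\}$ for suitable linear $v$. On each $C_{H_\lambda}$ the cusp $q_\lambda$ is characterized by the vanishing of the first and second derivatives of the defining cubic; the Gauss map restricted to the fibre, $\ga|_{C_{H_\lambda}}$, maps the cuspidal cubic to its dual, which is a line (the dual of a cuspidal cubic is a line traversed with multiplicity, by a standard degree/genus computation — or directly: the cuspidal cubic has one ``place'', and its tangent lines all pass through the image of the cusp, or the dual degenerates). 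In fact the cleanest route: the dual curve $C_{H_\lambda}^\vee \subset H_\lambda^\vee$ is a line, and under $H_\lambda^\vee \hookrightarrow \sP$ (the planes containing the point corresponding to $H_\lambda^\vee$... )—here I would instead just say that $\tilde\ga$ maps the fibre $C_{H_\lambda}$ onto a curve inside the plane $\{$hyperplanes containing the point $\ga(q_\lambda)\}$, and that this curve is a line through... This bookkeeping with the two $\PP^3$'s is exactly where I expect the main obstacle to lie: making precise that the Gauss image of each quasi-elliptic fibre is a single point or a line, and that these assemble into the full line $L^\vee$ rather than a smaller set. The condition that $X \cap L$ has only nodes and biplanar points (no uniplanar points, no higher singularities on $L$) is used to guarantee that the fibre structure over the relevant points of $\PP^1$ is mild enough that $B$ is genuinely a $1$-section and the cusp locus is a well-defined smooth rational curve dominating $\PP^1$; I would invoke Proposition~\ref{lem:12}-type control and the quasi-elliptic structure theory (as in Section~\ref{s:g=1}) to rule out pathologies.

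The endgame: once we know $\ga$ maps the curve of cusps $B$ onto a curve in $X^\vee$, and that the image of the fibre over $\lambda \in \PP^1$ lies in the dual plane $L^\vee$... wait, more directly — I would show $\ga(B) \subseteq L^\vee$ by the tangent-plane computation above (the tangent plane to $X$ at $q_\lambda$ is $H_\lambda$, a plane containing $L$, hence $\ga(q_\lambda) \in L^\vee$), and $\ga(B)$ is a curve (since $B \cong \PP^1$ dominates the base and the $q_\lambda$ genuinely vary, the tangent planes $H_\lambda$ genuinely vary), so $\ga(B) = L^\vee$ as both are irreducible curves and $L^\vee \cong \PP^1$. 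Therefore $L^\vee = \overline{\ga(B)} \subseteq \overline{\ga(S^0)} = X^\vee$, which is exactly the claim. The one genuinely delicate verification — and I flag it as the heart of the argument — is the identity $T_{q_\lambda}X = H_\lambda$, i.e.\ that at a cusp of the plane-section cubic the ambient tangent plane coincides with the cutting plane; in characteristic $2$ one must check this via the local equation (writing $F$ in Taylor coordinates adapted to $H_\lambda$ and using that the residual cubic acquires a cusp exactly when the relevant partial derivative drops, forcing $\nabla F(q_\lambda)$ to be proportional to the linear form defining $H_\lambda$), being careful that the inseparability phenomena emphasized in the introduction do not disrupt the conclusion.
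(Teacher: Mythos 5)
Your argument covers only half of the situation, and unfortunately the half it omits is the one where the hypothesis on the singularity types actually matters. You work throughout under the tacit assumption that, for general $H$ in the pencil, the cusp $q_H$ of the residual cubic $C_H$ is a \emph{smooth} point of $X$, so that $T_{q_H}X$ is defined, equals $H$, and hence $\ga(q_H)=[H]\in L^\vee$. That part is fine (and in fact not delicate: a plane section through a smooth point $q$ of $X$ is singular at $q$ if and only if $H=T_qX$, in any characteristic), and it is exactly the first case of the paper's proof, namely the case where the curve of cusps $\Sigma_0\subset S$ is not contracted by $\Phi:S\to X$. But nothing forces this: $\Sigma_0$ can be an exceptional curve of the resolution, i.e.\ the cusps $q_H$ can all coincide with a single singular point $P''$ of $X$ lying on $L$ (the remark following the proposition notes that this case genuinely occurs, e.g.\ when the cubic term $g$ in the local equation at $P''$ vanishes to order $4$). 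In that situation $\ga$ is undefined at the common image point, your step ``the $q_\lambda$ genuinely vary, the tangent planes $H_\lambda$ genuinely vary'' collapses, and one must instead compute the image of the relevant exceptional locus under the extended Gauss map $\tilde\ga$.

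That missing case is where the hypothesis that the singularities on $L$ are nodes or biplanar ($A_n$) points is used --- not, as you suggest, to make the curve of cusps a well-behaved $1$-section. (In characteristic $2$ it is in fact an inseparable $2$-section, $(\Sigma_0\cdot F)_{Q'}=2$ by Bombieri--Mumford, so your structural claim that $B$ maps isomorphically to $\PP^1$ is also incorrect, though harmlessly so for the case you do treat.) The paper's treatment of the contracted case writes the local equation at $P''$ as $xy+\la z^2+g$ with $\ord g\ge 3$, observes that for the planes of the pencil to cut out cusps at $P''$ their traces in the projectivized tangent cone must meet the conic $xy+\la z^2=0$ doubly, so the pencil of traces is generated by the linear forms $x,y$; the Gauss map near $P''$ is then $(y,x,0,0)+\text{h.o.t.}$, and its image on the exceptional locus is precisely the pencil $\mu_0x+\mu_1y=0$ of planes through $L$, i.e.\ $L^\vee$. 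Without an argument of this kind your proof does not establish the proposition.
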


 \begin{proof}
 We  consider  the curve $\Sigma_0 \subset S$ ($S$ is  the minimal resolution  of $X$ as usual)
 consisting of the horizontal divisorial part of the set 
 of singular points of the fibres,  the so-called curve of cusps.
 
 The first case  is when this curve is not  exceptional for the map 
 $$ \Phi: S \ra X \subset \PP^3;
 $$
 then we get a curve  on $X$ consisting  of singular points of the intersections $H \cap X$,
 where $H$ is a plane of the pencil through $L=\overline{PP'}$.
  Therefore the dual line $L^\vee$ is contained in $X^\vee$.
 
 \medskip

 The second case is where  $\Sigma_0$ is exceptional:
 we use for this Proposition 1, page 199 of \cite{bminv}, and denote as in loc.\ cit.\ $f : S \ra B$
 the quasi-elliptic fibration. At a  general point $Q' \in \Sigma_0$, the fibre $ F : = F_{f(Q')}$ has a cusp 
 and, if $t$ is a local parameter for $B$ at $f(Q')$, 
  the map is given by $ t = u (x^2 + y^3)$ 
 where $u$ is a unit in the formal power series ring which is the completion of the local ring $\hol_{S,Q'}$.
 
 Bombieri and Mumford show that there is a local parameter $\s$ such that  $\Sigma_0 = \{ \s=0\}$,
 and that $ (\Sigma_0  \cdot F )_{Q'} = 2$, so that $x,\s$ are local parameters for $S$ at $Q'$,
 and we can write $ y = \s + \la x $ plus higher order terms.
 
  Since we assume that the curve $ \Sigma_0 $ is contracted by the map $\Phi$, it follows that
  $\Phi$ has a local Taylor development
 which  contains only terms in the ideal generated by $y$.
 
 Hence  we are left only with monomials $y, y^2, xy, \dots$ whose 
 respective orders on the normalization of the fibre $F$ are: $2,4,5$.
 
 We conclude that the image of $F$  under $\Phi$ has a higher order cusp at a singular point $P''$
 lying in $L$. 
 
 By assumption, we can write the  equation of $X$ at $P''$ in local affine coordinates as
 $$ h: = xy +  \la z^2 + g (x,y,z) = 0 \;\;\;  (\lambda\in K),
 $$
 where $g$ has order $\geq 3$. 
 
 Since we want that the planes of the pencil  cut a cusp at $P''$, the quadratic part of the restriction of the equation $h$
  to the planes must be the square of a linear form,
hence in the projectivized tangent space we get lines  intersecting the exceptional conic $C$ with multiplicity two,
hence  lines  tangent to the conic;  from the equation $ xy +  \la z^2$ of the quadratic part follows that 
this   pencil is generated by the linear forms $x,y$.

  We claim now that, as in the first case,
    the pencil of planes through $L$ yields a line in the dual surface $X^{\vee}$.

 Because the Gauss map is given by $(y,x, 0,0) + h.o.t $,  and the  image of the exceptional divisor   in the dual surface is the pencil of planes $\mu_0 x + \mu_1 y =0$,
 which is exactly the pencil of planes containing $L$ by our previous argument.

\end{proof}

\begin{rem}
Both cases from the proof of the proposition actually occur (for the second case, it suffices that $g(x,y,z)$ above has order $ 4$).
\end{rem}

Note that 
 Propositions \ref{prop:>14} and \ref{prop:cusps} provide 
the missing ingredients for the proof of the Main Claim \ref{main-claim} in \ref{ss:main-claim}.
Thereby the proof of the first statement of Theorem \ref{theo} is now complete.

\section{14 singularities are nodes}

The aim of  this section is to prove the following result
which covers the second part of Theorem \ref{theo}:

\begin{theo}
\label{thm:14nodes}
Let $X\subset\PP^3$ be a normal quartic
with 14 singular points.
Then all singularities are nodes.
\end{theo}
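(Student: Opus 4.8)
The plan is to argue by contradiction. Suppose $X$ has $14$ singular points, not all nodes. By Corollary \ref{14} all of them are nodes or biplanar double points; write $b\ge 1$ for the number of biplanar ones, of types $A_{n_1},\dots,A_{n_b}$ with all $n_i\ge 2$. Since a node contributes $2$ and an $A_n$-biplanar point contributes $n+1$ to the Gaussian defect (Remark \ref{rem:non-RDP}), and there are no uniplanar points, the degree formula of Proposition \ref{gaussestimate} reads
\[
\deg(\gamma)\deg(X^{\vee})\;=\;36-\Big(28+\sum_{i=1}^b(n_i-1)\Big)\;\le\;8-b\;\le\;7 .
\]
If $X^{\vee}$ is a plane, the claim already follows from the analysis of Section \ref{ss:plane} (Propositions \ref{special}, \ref{insep}, Lemma \ref{dp}, Theorem \ref{dual=plane}, combined as in the proof of Proposition \ref{cor:9}), which in particular shows that a quartic with $14$ singular points and planar dual has only nodes. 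So I may assume $X^{\vee}$ is an irreducible surface, whence $\deg(\gamma)\deg(X^{\vee})\ge 3$ by Proposition \ref{gaussestimate}(IV). The whole strategy is then to show $\deg(X^{\vee})\ge 8$, contradicting the displayed bound.

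The plan for producing lines on $X^{\vee}$ follows the proof of the Main Claim \ref{main-claim}. For a pair $P_i,P_j$ of singular points, form the genus one pencil $|E'|$ attached to the line $L_{ij}=\overline{P_iP_j}$ as in Section \ref{s:quasi}. By Corollary \ref{14} the singular points of $X$ lying on any line are nodes or biplanar double points, so Proposition \ref{prop:cusps} yields $L_{ij}^{\vee}\subset X^{\vee}$ as soon as this pencil is quasi-elliptic. If $P_i,P_j$ are both nodes, then were the pencil elliptic, the exceptional configurations over the remaining $12$ singular points would form $12$ disjoint ADE-configurations inside the fibres, hence each of type $A_1$ by Corollary \ref{cor:ADE}; but one of those $12$ points is biplanar, a contradiction. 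Thus every node--node pencil is quasi-elliptic and $L_{ij}^{\vee}\subset X^{\vee}$ for every pair of nodes.

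Next I treat pencils through a biplanar point $Q$ of type $A_n$. If $n\ge 3$, then in the pencil attached to $\overline{P_iQ}$ (handling $L\subset X$ as in the proof of Proposition \ref{prop:>14}) the interior curves $C_Q^2,\dots,C_Q^{n-1}$ of the $A_n$-chain over $Q$ are orthogonal to $E'$ and hence fibre components (Remark \ref{rem:-2}); selecting alternate ones yields a $(-2)$-curve disjoint from the $12$ coming from the other singular points, so an elliptic pencil would carry at least $13$ disjoint $(-2)$-curves, against Proposition \ref{lem:12}, and the pencil is quasi-elliptic. If $b\ge 3$, then in \emph{every} pencil through a pair of singular points at least one of the remaining $12$ points is biplanar, its entire exceptional configuration is orthogonal to $E'$, and Corollary \ref{cor:ADE} again rules out an elliptic pencil; the same holds when $b=2$ for every pencil except the single one attached to $\overline{Q_1Q_2}$. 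Consequently, once $b\ge 3$, or $b\ge 1$ with every $n_i\ge 3$, all pencils through pairs of singular points are quasi-elliptic and Proposition \ref{prop:deg8} gives $\deg(X^{\vee})\ge 8$; and when $b=2$ with $n_1=n_2=2$, only $\overline{Q_1Q_2}$ escapes the argument, so one applies the weaker form of Proposition \ref{prop:deg8} with a node $P_1$ — for which every pencil $\overline{P_1P_j}$ is quasi-elliptic — and again obtains $\deg(X^{\vee})\ge 8$.

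\textbf{The main obstacle} is the remaining case $b=1$ with a single biplanar point $Q$ of type $A_2$. Here all $\binom{13}{2}$ dual lines of node--pairs lie on $X^{\vee}$, but in the pencil attached to $\overline{P_1Q}$ the two exceptional curves over $Q$ turn into sections rather than fibre components, so I cannot immediately exclude that this pencil is elliptic and hence cannot directly place $\overline{P_1Q}^{\vee}$ on $X^{\vee}$. I would attack this either (i) combinatorially, producing eight coplanar lines through a suitable node using only the node--pair dual lines and exploiting that no four singular points are collinear and that a plane contains at most six singular points (so two trisecants through a node already account for five singular points in their common plane), in the spirit of the proof of Proposition \ref{prop:deg8} and Lemma \ref{lem:companions}; or (ii) by a direct local computation of the Gauss map along the $A_2$-chain over $Q$, analogous to Proposition \ref{gaussestimate}(II), which should force either $\overline{P_1Q}^{\vee}\subset X^{\vee}$ or enough extra disjoint $(-2)$-fibre components to contradict Proposition \ref{lem:12}. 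In every case $\deg(X^{\vee})\ge 8$ contradicts $\deg(\gamma)\deg(X^{\vee})\le 7$, which proves the theorem.
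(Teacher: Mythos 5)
You correctly reduce to $u=0$, and you assemble the right tools: quasi-ellipticity of the pencils via Corollary \ref{cor:ADE} and Proposition \ref{lem:12}, dual lines via Proposition \ref{prop:cusps}, and the coplanar-line count of Proposition \ref{prop:deg8}. Your handling of $b\geq 2$ and of biplanar points of type $A_n$ with $n\geq 3$ is sound. But the proof is not complete: the case you yourself flag as ``the main obstacle'' --- thirteen nodes plus a single point of type $A_2$ --- is precisely the hard case, and neither of your sketched attacks closes it. For attack (i), the combinatorics (Lemma \ref{lem:companions}) only guarantee a singular point collinear with at most $5$ pairs, hence at least $8$ distinct connecting lines through it; you cannot afford to discard the one line through $Q$ (seven coplanar dual lines only give $\deg(X^\vee)\geq 7$, which is compatible with $\deg(\gamma)=1$ and the bound $\deg(\gamma)\deg(X^\vee)\leq 7$), nor do you control whether $Q$ lies on a trisecant through your chosen node, in which case the curves over $Q$ interfere with the fibre-component count. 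For attack (ii) no argument is given. A secondary gap: your dismissal of the planar-dual case overstates what Section \ref{ss:plane} proves. Propositions \ref{special} and \ref{insep} do give ``$14$ singular points $\Rightarrow$ nodes'' for their special equations, but for the generic planar shape Lemma \ref{dp} only yields $\deg(\gamma)\geq 4$, hence a Gaussian defect at most $32$ and $b\leq 4$, not $b=0$.

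The ingredient you are missing is the one the paper uses to finish: the classification \eqref{eq:fibres} of fibres of a quasi-elliptic fibration (types $\mathrm{II}$, $\mathrm{III}$, $\mathrm{I}^*_{2n}$, $\mathrm{III}^*$, $\mathrm{II}^*$ by \cite[Prop.\ 5.5.10]{CD}), whose only \emph{reduced} fibres ($\mathrm{II}$ and $\mathrm{III}$) have at most two components. The paper's route is: every pencil \emph{not} passing through the non-node $P$ is quasi-elliptic, because the fundamental cycle of $P$ sits in a single fibre as an ADE-configuration of type $\neq A_1$, contradicting Corollary \ref{cor:ADE} for an elliptic fibration; this already gives $\deg(X^\vee)\geq 6$, hence $b\leq 2$ and $P$ of type $A_2$ or $A_3$. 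Next, each node $Q$ must be collinear with at least $5$ pairs of singularities, since otherwise $8$ quasi-elliptic pencils through $Q$ would force $\deg(X^\vee)\geq 8$. Choosing a trisecant through $Q$ avoiding $P$, the residual cubics are reduced, at most three fibres of the resulting quasi-elliptic fibration can be non-reduced, and one of the $5$ collinear pairs is therefore forced onto a reduced fibre with at least $4$ components --- impossible. This fibre-type argument is exactly what disposes of your residual $A_2$ case, and it works uniformly, with no need to separate off the planar dual.
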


\begin{proof}

The minimal resolution $S$ of $X$ is a K3 surface by Proposition \ref{cor:9},
and the singular points are  nodes or biplanar double points ($u=0$)

by Corollary \ref{14}.



Assume that we have a  singular point $P$ which is not of  type $A_1$.
Just like in the proof of Proposition \ref{prop:>14},
any genus one fibration $S\to \PP^1$ 
induced by two singular points admits 12 disjoint smooth rational curves in the fibres.
By Proposition \ref{lem:12} this is the maximum possible for an elliptic fibration.

If the fibration is not induced by $P$ and another singular point, 
 $P$ lies in exactly one fibre of $X$
and   the fundamental cycle is supported on the corresponding fibre of $S$.
Hence Corollary \ref{cor:ADE} implies that the fibration is quasi-elliptic.

Any singular point $Q\neq P$ 
thus admits at least 6 quasi-elliptic fibrations induced by a pair of singular points  $Q, Q'$
 which are nodes or biplanar double points.
Hence we infer  from Proposition \ref{prop:cusps} 
and  the proof of Proposition \ref{prop:deg8}
that $\deg(X^\vee)\geq 6$ and $b\leq 2$.
More precisely, by Remark \ref{rem:non-RDP},
$P$ can only have type $A_2$ or $A_3$,
and in the former case there may be a second singular point $P'$ of type $A_2$.

In fact, we can say more about the configuration of singularities relative to $Q$.
Namely $Q$ is collinear with at least 5 pairs of singularities (possibly including $P$),
for else it would induces at least 8 quasi-elliptic fibrations,
and $\deg(X^\vee)\geq 8$ would give a contradiction using \eqref{eq:deg}.

We pick one such pair not involving $P$,
say $Q, Q', Q''\in L\subset X$,
and consider  the induced quasi-elliptic fibration 
$$\pi: S \to \PP^1.
$$

The fibres are the cubics $C$ residual to $L$ in the respective plane $H$ containing $L$.
Except possibly for the cubic containing $L$ as a component, these cubics are all reduced,
since they meet $L$ in the three points $Q, Q', Q''$.
Recall moreover that the exceptional (-2)-curve resolving a node not on $L$ also appears always with multiplicity 1,
hence the only fibres of $\pi$ which may not be reduced are those containing exceptional curves lying 
above the singular points of type $A_n$ with $ n \geq 2$
and the one containing $L$.
Since $b\leq 2$, this makes for at most 3 fibres.

Since there are  5 pairs of singular points collinear with $Q$,
there has to be a pair of nodes left which lie on a reduced fibre
(since no plane can contain more than 6 singular points (cf.\ \ref{ss:pfofclaim}),
so all pairs of points $\neq (Q',Q'')$ collinear with $Q$ lie on different fibres).
 In particular, this reduced fibre has at least 4 components.
However, by \cite[Prop.\ 5.5.10]{CD}
the possible fibre types
of a quasi-elliptic fibration are a priori 
\begin{eqnarray}
\label{eq:fibres}
\mathrm{II}, \;\;
\mathrm{III}, \;\; \mathrm I_{2n}^*\; (n\geq 0), \;\;    \mathrm{III}^*,\;\;\mathrm{II}^*.
\end{eqnarray}
 Of these, only fibres of type $\mathrm{II}, \mathrm{III}$ are reduced, with one or two components.
 This gives the required contradiction.
 Hence all singularities of $X$ are nodes.

\end{proof}

\section{Proof of Theorem \ref{theo}: the non-supersingular case}
\label{s:non-ss}

To complete the proof of Theorem \ref{theo},
it remains to analyse the non-supersingular case.

\begin{prop}
\label{prop:nu<14}
Let $X\subset \PP^3$ be a normal quartic
such that a minimal resolution is not a supersingular K3 surface.
Then $X$ contains at most 13 singular points.
\end{prop}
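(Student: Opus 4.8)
The plan is as follows. By the first part of Theorem \ref{theo}, which is already proved, $X$ has at most $14$ singular points; so we may assume $\nu=14$, and it suffices to show that the minimal resolution $S$ is supersingular, contrary to the hypothesis. By Theorem \ref{thm:14nodes} all $14$ singularities are then nodes, and by Proposition \ref{cor:9} $S$ is a K3 surface; write $E_1,\dots,E_{14}$ for the pairwise disjoint $(-2)$-curves resolving them, so that $H\cdot E_i=0$ and $H^2=4$.

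Next I would invoke the genus one fibrations of Section \ref{s:quasi}. Fixing two nodes $P_1,P_2$ and passing from $H-E_1-E_2$ to a primitive base point free isotropic class $E'$ (subtracting the strict transform of $L=\overline{P_1P_2}$ if $L\subset X$) yields a genus one pencil $f\colon S\to\PP^1$. Exactly as in the proof of Proposition \ref{prop:>14}, but now with $\nu=14$, the fibre components of $f$ include precisely $12$ pairwise disjoint smooth rational curves, while $E_1,E_2$ are disjoint bi-sections (sections, in the case $L\subset X$) and $H=F+E_1+E_2$ with $F$ the fibre class. If $f$ is quasi-elliptic, then $S$ is unirational, hence supersingular by \cite{shiodass}, a contradiction; so $f$ is elliptic. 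Then by Proposition \ref{lem:12} we are in the extremal case of $12$ disjoint $(-2)$-curves in the fibres, so Proposition \ref{lem:=12} leaves only the possibility that $f$ has exactly two additive fibres (otherwise $S$ is supersingular), and by Corollary \ref{cor:12fibres} these have Kodaira type $\mathrm I^*_{2n}$, $\mathrm I^*_1$, $\mathrm{IV}^*$ or $\mathrm{III}^*$, all other fibres being of type $\mathrm I_{2n}$.

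The heart of the proof is to exclude this last configuration for a quartic with $14$ nodes. Here I would use Lemma \ref{lem:companions} (applicable since $\nu=14\ge 9$) to pick a node $P_1$ with two companions $P_2,P_3$; the two resulting elliptic fibrations $f_{12}$ and $f_{13}$ then share the bi-section $E_1$, with $H=F_{1j}+E_1+E_j$ and $F_{12}\cdot F_{13}=2$. Since the $12$ node curves must realise the maximal disjoint configuration inside each reducible fibre and, by Corollary \ref{cor:ADE}, each of them is a single $A_1$ and hence a simple fibre component, the placement of the $E_k$ in the two additive fibres of each $f_{1j}$ is essentially forced; combining this with the Shioda--Tate formula for $f_{12}$ and $f_{13}$ and with the projective constraints on the node configuration (no plane of $\PP^3$ contains more than $6$ nodes and no line more than $3$, see \S\ref{ss:pfofclaim}) should either exhibit more independent classes in $\mathrm{NS}(S)$ than the bound $\rho\le 20$ valid for non-supersingular K3 surfaces allows, or — after running through the finitely many admissible pairs of additive fibre types and inspecting how $E_1$ meets their non-reduced components — show that the Jacobian of $f$ admits a Weierstrass equation with $\deg a_1=2$, which by the analysis in the proof of Proposition \ref{lem:=12} again forces $S$ to be supersingular. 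In every case the standing assumption is contradicted, so $\nu=14$ implies $S$ supersingular, and the proposition is the contrapositive.

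The principal obstacle is exactly this final step. An abstract elliptic K3 surface with $12$ disjoint $(-2)$-curves in the fibres and two additive fibres need not be supersingular: the Kummer surface of a product of two ordinary, non-isogenous elliptic curves has two fibres of type $\mathrm I^*_4$, twelve disjoint $(-2)$-curves (Remark \ref{rem:occur}), and Picard number $18$. So the argument must genuinely use that these $(-2)$-curves, together with the degree $4$ polarisation $H$, arise from $14$ nodes of a quartic in $\PP^3$; the delicate part is to turn the resulting projective incidence conditions on the nodes into information about $\mathrm{NS}(S)$, or about the Weierstrass normal forms of the pencils $f_{1j}$, strong enough to rule out finite height.
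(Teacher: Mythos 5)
Your reduction is exactly the paper's: assume $\nu=14$, conclude via Theorem \ref{thm:14nodes} that all singularities are nodes and $S$ is a (non-supersingular) K3 surface, observe that each pencil induced by a pair of nodes carries $12$ disjoint $(-2)$-curves in its fibres, rule out the quasi-elliptic case by unirationality, and invoke Proposition \ref{lem:=12} and Corollary \ref{cor:12fibres} to land in the configuration of two additive fibres of type $\mathrm I_{2n}^*$, $\mathrm I_1^*$, $\mathrm{IV}^*$ or $\mathrm{III}^*$. Your observation that this configuration cannot be excluded by abstract lattice or Picard-number considerations (the Kummer surface of a product of two ordinary, non-isogenous curves realizes it with $\rho=18$) is correct, and it is precisely why the paper does not argue that way.

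But the proposal stops there: the sentence ``combining this with the Shioda--Tate formula \dots should either exhibit more independent classes \dots or \dots show that the Jacobian \dots'' is a hope, not an argument, and neither alternative is carried out (nor is it clear either could be: $H$ and the $14$ exceptional curves span only rank $15$, nowhere near contradicting $\rho\leq 20$). The paper's actual completion is geometric and combinatorial, and its two key ideas are absent from your sketch. First, if three nodes are collinear, or if some line $\overline{P_iP_j}$ lies on $X$, the induced fibration acquires three disjoint sections; since a section meets only \emph{simple} components of a fibre, each of the two additive fibres can then support at most $N_v-1$ of the disjoint exceptional curves, leaving room for only $10$ of the required $12$ --- a contradiction. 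Second, in the remaining case every additive fibre comes from a \emph{non-reduced} plane section $X\cap H$ (the exceptional curves over the nodes are simple fibre components and the two distinguished nodes give bisections), and inspection of the list \eqref{eq:types} shows only type $\mathrm I_0^*$ survives, realized by a double conic through exactly $6$ nodes; hence every pair of nodes lies on exactly two planes each containing $6$ nodes, so counting incidences gives $2\binom{14}{2}=182$ on one side and a multiple of $\binom{6}{2}=15$ on the other, which is impossible. Without an argument of this kind --- one genuinely exploiting that the fibres are plane sections in $\PP^3$ --- the proof does not close.
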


\begin{proof}
By the general part of Theorem \ref{theo},
we only have to rule out: $X$ contains 14 singularities.
Assuming  this, all singular points are nodes by Theorem \ref{thm:14nodes}, 
and the minimal resolution $S$ is a K3 surface
(non-supersingular by assumption).
We continue to study the fibrations $\pi_{i,j}$ induced by pairs of nodes $(P_i, P_j)$.
The proof of Theorem \ref{thm:14nodes} shows that the fibres contain
12 disjoint $(-2)$-curves, so by Proposition \ref{lem:=12} 
there are two additive fibres; by Corollary \ref{cor:12fibres}, the possible types are
\begin{eqnarray}
\label{eq:types}
\mathrm I_{2n}^* (n\geq 0), \;\;  \mathrm I_1^*,\;\;   \mathrm{IV}^*,\;\; \text{ and } \;\;\mathrm{III}^*.
\end{eqnarray}
We distinguish three cases:

\subsubsection{}

\label{ss:3_collinear}
If there are 3 collinear nodes,
then they give sections of the induced fibration,
and the 11 exceptional curves above the other nodes embed into the negative definite root lattices
which are the orthogonal complements of the sections.
On the multiplicative fibres, this imposes no general restrictions,
but additive fibres can, by inspection of the singular fibres, as described in the proof of Proposition \ref{lem:12}, only support one disjoint smooth rational curve less.  This is because the sections necessarily intersect only the simple fibre components. Therefore  
the number of  disjoint rational curves not meeting one  of the three sections is at most
$N_v - 1$, where we recall that
\begin{eqnarray}
\label{eq:N_v'}
N_v \leq \frac 12 \lfloor e(F_v) + \delta_{v}\rfloor.
\end{eqnarray}
Hence, with two additive fibres, there can only be 10 disjoint $(-2)$-curves
supported on the orthogonal complement of the sections, contradiction.

\subsubsection{}
\label{ss:2_collinear}
Thus we may assume that there are no three collinear nodes
(i.e.,  any two nodes are companions).
Note that this implies that any 3 nodes lie on a unique plane.
If some connecting line is contained in $X$,
then the line and the two nodes give sections of the fibration,
with 12 disjoint $(-2)$-curves in the fibres.
Hence the argument from \ref{ss:3_collinear} applies
to establish a contradiction.

\subsubsection{}
\label{ss:double_conics}
We can therefore assume that no   line $\overline{P_iP_j}$  is contained in $X$.
We continue by restricting  the possible additive fibre types.
They arise from the quartic curve $X\cap H$ by blowing up the nodes in the plane $H$:
two of them give bisections of the fibration while the others give $(-2)$-fibre components,
 which cannot be such that their multiplicity in the fibre is $\geq 3$.

\smallskip

Only the additive fibre type I$^*_0$ can be realized of the possible types listed in \eqref{eq:types}
(as a double conic with 6 nodes;  see part III). The reason is based on the fact that  
 this is the only one with only one  component with multiplicity at least 2,
while the others have several components appearing with multiplicity at least 2, indeed at least 3 
components except for the case of I$^*_1$.

Indeed, the blow ups of nodes appear with multiplicity 1, hence the plane section $X \cap H$ must be non reduced. 
In particular there are at most 2 components appearing with multiplicity at least 2. 

To exclude the case of I$^*_1$, we need to exclude that $X \cap H$ consists of two double lines.
 In this case there are at most 4 nodes in $H$, since the intersection point of the two double lines cannot be a node, and there are no 3 collinear nodes by assumption,
hence the number of irreducible components of the fibre is at most 4, a contradiction.

\smallskip

 Therefore  each fibration $\pi_{i,j}$ admits two such fibres.
This turns out to be too restrictive:
 in fact, we have seen that for each pair $\sP$ of nodes, there are exactly two planes $\pi$
containing the pair, each containing six nodes. 

Consider then the number of pairs as above $(\sP, \pi), \sP  \subset \pi$. The number is therefore $(13)\cdot 14$.
But since each such plane $\pi$ contains exactly $15$ such pairs $\sP$, we have obtained a contradiction.

\end{proof}

\subsubsection{Proof of Theorem \ref{theo}}

Since the triple point case was covered in \cite{cat21},
we only have to deal with double point singularities.
The general statement that a normal quartic in characteristic 2
contains at most 14 singularities
was proved in \ref{ss:pf-thm} (using Propositions \ref{prop:>14} and \ref{prop:cusps}).
That 14 singular points necessarily form nodes was proved in Theorem \ref{thm:14nodes}.
An irreducible component was exhibited in Theorem \ref{dual=plane}.
Finally, the result that there are fewer than 14 singularities if the resolution is not a supersingular K3 surface,
was proved in Proposition \ref{prop:nu<14}.

\qed

\subsection*{Acknowledgement} 
Thanks  to the anonymous referees for their comments
which helped us improve the paper.
We would also like to thank Stephen Coughlan
for an interesting conversation.

\end{document}